\documentclass[a4paper, 11pt]{article}
\usepackage{amsmath, amssymb, amsthm, latexsym}
\usepackage{xspace}
\usepackage{hyperref}

\makeatletter
\let\@wraptoccontribs\wraptoccontribs
\makeatother

\newtheorem{proposition}{Proposition}[section]
\newtheorem{theorem}[proposition]{Theorem}
\newtheorem{corollary}[proposition]{Corollary}
\newtheorem{conjecture}[proposition]{Conjecture}
\newtheorem{lemma}[proposition]{Lemma}

\newtheorem*{proposition*}{Proposition}
\newtheorem*{theorem*}{Theorem}
	
\theoremstyle{definition}
\newtheorem{definition}[proposition]{Definition}
\newtheorem{question}[proposition]{Question}

\newcommand{\set}[1]{\left\{#1\right\}}
\newcommand{\setcon}[2]{\left\{#1\ \left|\ #2\right.\right\}}

\newcommand{\abs}[1]{\left\lvert#1\right\rvert}
\newcommand{\C}{\mathbb{C}}
\newcommand{\HH}{\mathbb{H}}
\newcommand{\R}{\mathbb{R}}
\newcommand{\Z}{\mathbb{Z}}
\newcommand{\N}{\mathbb{N}}

\newcommand{\im}{\textup{im}}
\newcommand{\diam}{\textup{diam}}
\newcommand{\vol}{\textup{vol}}
\newcommand{\sep}{\textup{sep}}
\newcommand{\cut}{\textup{cut}}
\newcommand{\wir}{\textup{wir}}
\newcommand{\DL}{\textup{DL}}
\newcommand{\calG}{\mathcal{G}}

\title{Thick embeddings of graphs into symmetric spaces via coarse geometry}
\author{Benjamin Barrett and David Hume \\ with an appendix by Larry Guth and Elia Portnoy}

\date{\today}

\begin{document}

\maketitle
\begin{abstract}
We prove estimates for the optimal volume of thick embeddings of finite graphs into symmetric spaces, generalising results of Kolmogorov-Barzdin and Gromov-Guth for embeddings into Euclidean spaces. 
We distinguish two very different behaviours depending on the rank of the non-compact factor. For rank at least 2, we construct thick embeddings of $N$-vertex graphs with volume $CN\ln(1+N)$ and prove that this is optimal. For rank at most $1$ we prove lower bounds of the form $cN^a$ for some (explicit) $a>1$ which depends on the dimension of the Euclidean factor and the conformal dimension of the boundary of the non-compact factor.
The main tool is a coarse geometric analogue of a thick embedding called a coarse wiring, with the key property that the minimal volume of a thick embedding is comparable to the ``minimal volume'' of a coarse wiring for symmetric spaces of dimension at least $3$. In the appendix it is proved that for each $k\geq 3$ every bounded degree graph admits a coarse wiring into $\mathbb{R}^k$ with volume at most $CN^{1+\frac{1}{k-1}}$. As a corollary, the same upper bound holds for real hyperbolic space of dimension $k+1$ and in both cases this result is optimal.
\end{abstract}

\section{Introduction}
The focus of this paper is on thick embeddings of graphs as considered by Kolmogorov-Barzdin and Gromov-Guth \cite{KB,GG}. By a graph, we mean a pair $\Gamma=(V\Gamma,E\Gamma)$ where $V\Gamma$ is a set whose elements are called vertices, and $E\Gamma$ is a set of unordered pairs of distinct elements of $V\Gamma$. Elements of $E\Gamma$ are called edges. The \textbf{topological realisation} of a graph is the topological space obtained from a disjoint union of unit intervals indexed by $e\in E\Gamma$, whose end points we label using the two elements contained in $e$. We then identify all endpoints which are labelled by the same element of $V\Gamma$. We will use $\Gamma$ to refer to both the graph and its topological realisation.

The idea behind thick embeddings of graphs is that they are the appropriate embeddings to consider in situations where the graph models a physical object (i.e.\ vertices and edges are ``thick'' and therefore need to remain a prescribed distance apart). Two key examples are: a brain, where neurons are represented by vertices and axons by edges; and an electronic network, where components are vertices and wires are edges. We briefly summarise the relevant results from \cite{KB,GG} in the following two theorems.

\begin{theorem}\label{thm:KBGGub} Let $\Gamma$ be a finite graph with maximal degree $d$. For each $k\geq 3$, there is a topological embedding $f_k:\Gamma\to\R^k$ and a constant $C=C(d,k)$ with the following properties:
	\begin{enumerate}
		\item[$(i)$] $d_{\R^k}(f_k(x),f_k(y))\geq 1$ whenever $x,y$ are: two distinct vertices; an edge and a vertex not contained in that edge; or two disjoint edges.
		\item[$(ii)$] $\diam(f_3):=\diam(\im(f_3))\leq C|\Gamma|^{1/2}$.
		\item[$(iii)$] $\diam(f_k)\leq C|\Gamma|^{1/(k-1)}\ln(1+|\Gamma|)^4$.
	\end{enumerate}
\end{theorem}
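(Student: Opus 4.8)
The plan is to reconstruct the Kolmogorov--Barzdin construction for parts $(i)$ and $(ii)$ and its extension by Gromov--Guth for part $(iii)$. Write $N=\abs{\Gamma}$, and note that bounded degree gives $\abs{E\Gamma}\le\tfrac d2 N=:M$. The core of the argument is combinatorial: place the vertices on a well-separated $(k-1)$-dimensional integer grid sitting inside a coordinate hyperplane $\R^{k-1}\times\set{0}\subset\R^k$, and then route the edges as pairwise disjoint, uniformly thick ``wires'' through the thin slab of $\R^k$ lying on one side of that hyperplane. Concretely, set $L=\lceil c\,N^{1/(k-1)}\rceil$ so that $L^{k-1}\ge N$, fix an injection $\phi\colon V\Gamma\hookrightarrow\set{1,\dots,L}^{k-1}$, and rescale by a constant $\lambda=\lambda(d)$ so that distinct vertices land at distance $\ge 1$ with room to be thickened into disjoint balls. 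Since $\diam(\phi(V\Gamma))\lesssim\lambda L\lesssim N^{1/(k-1)}$ is already the target value, the whole difficulty is to route the edges without enlarging the diameter by more than a polylogarithmic factor.

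For the routing, choose $\phi$ uniformly at random and, for each edge $e=\set{u,v}$, build an axis-parallel ``staircase'' path $\gamma_e$ from $\phi(u)$ to $\phi(v)$ which rises off the hyperplane to a generic height, travels horizontally until it lies above $\phi(v)$, and then descends, all paths being confined to a slab $\set{1,\dots,L}^{k-1}\times\set{0,\dots,H}$. A first-moment computation shows that the expected number of wires passing within distance $O(1)$ of a fixed lattice point is of order $M\cdot(\text{typical path length})/(\text{slab volume})\asymp N\cdot N^{1/(k-1)}/(L^{k-1}H)$, which is $O(1)$ as soon as $H\asymp N^{1/(k-1)}$; a Chernoff estimate together with a union bound over the $\lesssim N^{k/(k-1)}$ lattice cells upgrades this to a bound of $O(\ln N)$ on the \emph{maximum} congestion, with high probability. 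This is exactly where the logarithms in $(iii)$ come from: reducing the congestion to $O(1)$, so that the wires can be spread into genuinely disjoint unit tubes, is achieved by dilating the slab in the $H$-direction and re-routing, and this move has to be iterated — organised, say, by treating the $O(\ln N)$ dyadic length-scales of wires separately and routing through expander-type ``sorting'' networks within each scale, each stage costing a logarithmic factor — which is what produces the exponent $4$ on $\ln(1+N)$.

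Once the congestion is $O(1)$, one converts the congested drawing into an honest thick embedding: inside each unit cube of the slab only $O(1)$ wires compete, so after one more constant dilation a cube-by-cube Hall's-theorem / bipartite-matching argument reassigns the competing wires to disjoint unit sub-tubes, producing pairwise $1$-separated wires that also stay at distance $\ge 1$ from the vertex balls and from non-incident pieces. Contracting back by $\lambda^{-1}$ and rescaling so that the separation is exactly $1$ gives a topological embedding $f_k\colon\Gamma\to\R^k$ satisfying $(i)$ with $\diam(f_k)\lesssim N^{1/(k-1)}\ln(1+N)^4$, which is $(iii)$. For $k=3$ the vertex grid is two-dimensional, and Kolmogorov--Barzdin's original, more hands-on argument organises the routing so that the worst-case congestion is $O(1)$ from the outset, removing the logarithm and yielding the clean bound $\diam(f_3)\lesssim N^{1/2}$ of $(ii)$.

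The main obstacle is the routing step. One must control the \emph{worst-case} congestion rather than merely its average, and then pass from a bounded-congestion drawing to genuinely disjoint tubes of uniform thickness without losing more than a constant factor at each round; everything else (the vertex placement, the rescaling, the verification that the resulting map is a topological embedding) is elementary. Keeping the number of congestion-reduction rounds — hence the power of the logarithm — down to $4$ is the delicate bookkeeping, and it is the only place where more than first-moment estimates and a matching argument are required.
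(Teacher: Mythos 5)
The paper does not actually prove this theorem: it is quoted from \cite{KB,GG} as background (``We briefly summarise the relevant results from \cite{KB,GG} in the following two theorems''). What the paper \emph{does} prove, in the appendix, is the strictly stronger Theorem~\ref{thm:KBRn}, which removes the logarithmic factor from $(iii)$ entirely. So the relevant comparison is with that appendix argument.

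You and the appendix share the opening move: place $V\Gamma$ on a $(k-1)$-dimensional grid in a coordinate hyperplane and route each edge as an axis-parallel staircase through the adjacent slab. After that the mechanisms diverge. The appendix routes edges \emph{sequentially}: for each edge $e_i$ it considers a $(k-1)$-parameter family of candidate staircases $\gamma(i,j)$ built from $2k-1$ segments, and shows by a union bound over segments that at each step more than half the candidates meet all previously chosen paths only at shared endpoints or transversally. This gives $O(1)$ congestion in one pass --- no Chernoff estimate, no union bound over lattice cells, no dilation, and no iterated re-routing --- and hence no logarithms. Your sketch instead follows the Gromov--Guth template: random vertex placement, first-moment estimate of $O(1)$ expected congestion per cell, Chernoff plus a union bound over cells giving $O(\ln N)$ worst-case congestion, then an iterated re-routing / sorting scheme to reach $O(1)$.

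There is a genuine gap in your sketch precisely where the theorem's $\ln(1+N)^4$ has to be produced. First, the Chernoff step is not available off the shelf: with $\phi$ a random bijection, the events ``$\gamma_e$ passes through cell $C$'' are not independent across $e$, so the concentration claim needs an argument (negative association, a martingale, or an explicit decoupling), none of which is indicated. Second, and more seriously, the step ``iterated re-routing through expander-type sorting networks, each stage costing a logarithm, producing the exponent $4$'' is a list of named ingredients, not an argument: it does not say what is being re-routed, what invariant improves per round, or why four rounds and not three. You acknowledge this yourself (``the delicate bookkeeping''), but that is the entire content of part $(iii)$. Third, your own $O(\ln N)$ congestion claim appears to undercut the need for iteration at all: a single dilation by $(\ln N)^{1/(k-1)}$ already supplies $\ln N$ parallel channels per cell, so either a one-shot resolution works (making the $\ln^4$ bound pessimistic for your construction) or it fails because per-cell choices must be globally consistent along shared runs of segments --- and in the latter case that obstruction is exactly what a proof would have to address, and the sketch does not. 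Finally, $(ii)$ is asserted in one sentence with no argument. In short, the easy scaffolding is right, but the congestion-to-disjointness step, which is the whole difficulty, is not carried out; the appendix's sequential greedy routing is both simpler and stronger, and is the argument you should be aiming to reconstruct.
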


Let $Z$ be a metric space. We say a topological embedding $g:\Gamma\to Z$ is \textbf{$\varepsilon$-thick} if it satisfies the inequality $d_{Z}(g(x),g(y))\geq \varepsilon$ whenever $x,y$ are as in condition $(i)$.

\begin{theorem}\label{thm:KBGGlb}
	Let $k\geq 3$. For every $\delta,\varepsilon>0$ and $d\in\N$ there is a constant $c>0$ such that given any finite graph $\Gamma$ with maximal degree $d$ and Cheeger constant (cf.\ Definition \ref{defn:Cheeger}) $\geq \delta$ and any $\varepsilon$-thick topological embedding $g:\Gamma\to\R^k$, we have $\diam(g)\geq c^{-1}|\Gamma|^{1/(k-1)}-c$.
\end{theorem}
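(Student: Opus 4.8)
The plan is to run the classical ``sweeping hyperplane'' argument of Kolmogorov--Barzdin, which combines a Cheeger-type lower bound on the number of edges crossing a balanced vertex bipartition with an isoperimetric upper bound on the $(k-1)$-volume of a hyperplane slice of a Euclidean ball.

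First I would set $R:=\diam(g)$ and fix a point $x_0\in\im(g)$, so that $\im(g)$ lies in the closed ball $\bar B(x_0,R)$. I would then choose a unit vector $u\in\R^k$ generic enough that the real numbers $\langle u,g(v)\rangle$, $v\in V\Gamma$, are pairwise distinct, and sweep the affine hyperplanes $H_t=\{x:\langle u,x\rangle=t\}$: as $t$ increases, the number of vertices $v$ with $\langle u,g(v)\rangle<t$ increases in steps of $1$ from $0$ to $\abs{\Gamma}$, so there is a value $t_0$ for which $H_{t_0}$ misses $g(V\Gamma)$ and the two open half-spaces bounded by $H_{t_0}$ contain vertex sets $A\sqcup B=V\Gamma$ with $\abs{A}=\lfloor\abs{\Gamma}/2\rfloor\le\abs{B}$. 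Since the Cheeger constant of $\Gamma$ is $\ge\delta$ (Definition \ref{defn:Cheeger}), the set $\mathcal{E}$ of edges with one endpoint in $A$ and one in $B$ has $\abs{\mathcal{E}}\ge\delta\lfloor\abs{\Gamma}/2\rfloor$; and for each $e\in\mathcal{E}$ the function $s\mapsto\langle u,g(e)(s)\rangle$ runs continuously from below $t_0$ to above $t_0$, so by the intermediate value theorem I may pick a point $p_e\in g(e)\cap H_{t_0}$. (No regularity of $g$ is needed: only one crossing point per edge is used.)

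The second step is to show that the balls $B_{\R^k}(p_e,\varepsilon/2)$, $e\in\mathcal{E}$, cover each point of $\R^k$ with multiplicity at most $M=M(d)$. Here $\varepsilon$-thickness and bounded degree enter: if $p_e$ and $p_{e'}$ lie within $\varepsilon$ of one another then $e$ and $e'$ cannot be disjoint edges (otherwise $d_{\R^k}(g(e),g(e'))\ge\varepsilon$ fails), so any family of edges whose crossing points lie in a common $(\varepsilon/2)$-ball is pairwise intersecting, hence is a star or contained in a triangle and so has at most $\max(d,3)$ members; thus $M=\max(d,3)$ works. Now $D_e:=H_{t_0}\cap B_{\R^k}(p_e,\varepsilon/2)$ is a round $(k-1)$-ball of radius $\varepsilon/2$ (since $p_e\in H_{t_0}$) contained in the $(k-1)$-ball $H_{t_0}\cap\bar B(x_0,R+\varepsilon/2)$, so comparing $(k-1)$-volumes, using the multiplicity bound on the left and inclusion on the right, gives
\[
\frac{\abs{\mathcal{E}}}{M}\,\omega_{k-1}\Big(\frac{\varepsilon}{2}\Big)^{k-1}\ \le\ \vol_{k-1}\!\Big(\bigcup_{e\in\mathcal{E}}D_e\Big)\ \le\ \omega_{k-1}\big(R+\tfrac{\varepsilon}{2}\big)^{k-1},
\]
where $\omega_{k-1}$ denotes the volume of the unit $(k-1)$-ball. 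Feeding in $\abs{\mathcal{E}}\ge\delta\lfloor\abs{\Gamma}/2\rfloor$ and rearranging yields $R=\diam(g)\ge c^{-1}\abs{\Gamma}^{1/(k-1)}-c$ for a suitable $c=c(\delta,\varepsilon,d,k)$, the additive $-c$ also absorbing the finitely many small values of $\abs{\Gamma}$.

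I expect the only genuinely delicate point to be the multiplicity bound of the middle step: one must exploit the precise shape of the definition of $\varepsilon$-thickness (it controls \emph{disjoint} pairs of edges but says nothing about edges sharing a vertex) together with the elementary observation that a pairwise-intersecting family of graph edges is a star or a triangle, and one should keep in mind that $p_e$ may happen to lie close to a vertex image. The remaining ingredients --- existence of a balanced separating hyperplane, the intermediate value theorem producing $p_e$, and the isoperimetric estimate for hyperplane slices of a ball --- are routine. If one prefers to avoid the multiplicity bookkeeping, an alternative is to extract from $\mathcal{E}$ a subfamily whose crossing points are pairwise $\ge\varepsilon/2$ apart (of size $\ge\abs{\mathcal{E}}/M$ by the same packing fact) and to work with genuinely disjoint balls, at the cost of a slightly worse constant.
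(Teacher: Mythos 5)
Your argument is correct, and it is essentially the classical Kolmogorov--Barzdin / Gromov--Guth sweeping-hyperplane proof. The paper does not re-prove this theorem: it is cited from \cite{KB,GG} as background, and the authors' own lower-bound machinery (Section 5) is of a different flavour. It is worth contrasting the two. Your argument works directly in $\R^k$: a generic hyperplane sweep produces a balanced bipartition, the Cheeger hypothesis forces $\gtrsim\delta\abs{\Gamma}$ crossing edges (each vertex $w\in\partial A$ contributes a distinct crossing edge, so $\abs{\mathcal{E}}\ge\abs{\partial A}\ge\delta\lfloor\abs{\Gamma}/2\rfloor$), the $\varepsilon$-thickness plus the star/triangle classification of pairwise-intersecting edge families gives the bounded-multiplicity packing of the crossing points with constant $M=\max(d,3)$, and the isoperimetric comparison in the $(k-1)$-dimensional slice does the rest. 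The paper's route to lower bounds instead converts a thick embedding into a coarse $k$-wiring (Proposition \ref{prop:coarse_wiring}), bounds the wiring volume from below via separation profiles (Proposition \ref{prop:sepLB} / Theorem \ref{thm:wirsep}), and this gives $\vol(g)\gtrsim\abs{\Gamma}^{k/(k-1)}$, from which the diameter bound follows in $\R^k$ using $\vol(g)\lesssim(\diam(g)+1)^k$. The separation-profile route is more general (it works verbatim for all symmetric spaces, not just $\R^k$) but is less elementary; your direct geometric argument is the historically original one and has the advantage of giving an explicit, self-contained constant. One small point of care you correctly flag: because $\varepsilon$-thickness only constrains \emph{disjoint} edges, the multiplicity bound genuinely needs the observation that a pairwise-intersecting edge family is a star or a triangle, not just a degree bound; and the strict inequality $\abs{p_e-p_{e'}}<\varepsilon$ (rather than $\le\varepsilon$) when two $\varepsilon/2$-balls meet is what triggers thickness, so the argument is watertight as written.
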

When $Z$ admits a measure, we define the \textbf{volume} $\vol(g)$ of an $\varepsilon$-thick topological embedding $g:\Gamma\to Z$ to be the measure of the $1$-neighbourhood of its image\footnote{The choice of $1$-neighbourhood is arbitrary for measure spaces with controlled growth (cf.\ Definition \ref{defn:controlledgrowth}) replacing this by another positive real changes volume by at most some uniform multiplicative constant.}. From Theorem $\ref{thm:KBGGub}$ we get obvious upper bounds on the volume of $1$-thick embeddings into $\R^k$. Namely, $\vol(f_3)\leq C'|\Gamma|^{3/2}$ and $\vol(f_k)\leq C' |\Gamma|^{k/(k-1)}\ln(1+|\Gamma|)^{4k}$.\medskip

In the main paper, we prove versions of Theorems \ref{thm:KBGGub} and \ref{thm:KBGGlb} for thick embeddings into symmetric spaces. The goal of the appendix is to provide sharp upper bounds for thick embeddings into Euclidean spaces. The main result there is a complete proof of an optimal version of Theorem \ref{thm:KBGGub}(iii). Such an argument had previously been sketched by Guth.

\begin{theorem}\label{thm:KBRn} Let $d,k\in\N$ with $k\geq 3$. There is a constant $C=C(d,k)$ such that for every finite graph $\Gamma$ with maximal degree $d$, there is a $1$-thick topological embedding $f_k:\Gamma\to\R^k$ which satisfies $$\diam(f_k)\leq C|\Gamma|^{1/(k-1)} \quad \textrm{and} \quad \vol(f_k)\leq C|\Gamma|^{1+1/(k-1)}.$$
\end{theorem}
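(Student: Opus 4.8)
The plan is to produce, for a suitable constant $C_0=C_0(d,k)$, a combinatorial \emph{wiring} of $\Gamma$ into a cube $Q\subset\R^k$ of side length $L=C_0|\Gamma|^{1/(k-1)}$ --- that is, an injection of $V\Gamma$ into the integer grid inside $Q$ together with embedded grid paths $\gamma_e$ joining the endpoints of each edge $e$ --- whose \textbf{multiplicity} is bounded: every unit cube of $Q$ meets at most $M=M(d,k)$ of the paths $\gamma_e$ (and contains at most $d$ vertices), while the total length $\sum_e \mathrm{length}(\gamma_e)$ is $O(|\Gamma|^{1+1/(k-1)})$. Granting such a wiring, the theorem follows by \textbf{thickening}: inside a single unit cube only $\le M$ strands and $\le d$ short vertex--stubs appear, and since $k\ge 3$ a unit cube accommodates $\asymp\varepsilon^{-(k-1)}\gg M+d$ pairwise $\varepsilon$-separated tubes, so for $\varepsilon=\varepsilon(M,d,k)$ small enough we may perturb these finitely many arcs in the interior of the cube, fixing their endpoints on the boundary, to arcs with pairwise disjoint $\varepsilon$-neighbourhoods that also avoid the $\varepsilon$-balls about the vertices. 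Performing this cube by cube (the perturbations are compatible because boundary points are fixed) and rescaling $\R^k$ by $\varepsilon^{-1}$ yields a $1$-thick topological embedding $f_k$ with $\diam(f_k)\lesssim\varepsilon^{-1}L\asymp|\Gamma|^{1/(k-1)}$ and $\vol(f_k)$ bounded by a constant times the number of unit cubes meeting $\bigcup_e\gamma_e$, hence by a constant times $\sum_e\mathrm{length}(\gamma_e)=O(|\Gamma|^{1+1/(k-1)})$.

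To build a wiring with \textbf{polylogarithmic} multiplicity (which already essentially recovers Theorem~\ref{thm:KBGGub}(iii)), subdivide $Q$ dyadically into subcubes of sides $L,L/2,\dots,1$ and distribute the $|\Gamma|$ vertices so that each side-$\ell$ subcube holds $\asymp(\ell/L)^k|\Gamma|$ of them; since $|\Gamma|=C_0^{-(k-1)}L^{k-1}\ll L^k$ one keeps at most one vertex per unit cube. Because $\Gamma$ has degree $\le d$, the number of edges with exactly one endpoint in a fixed side-$\ell$ subcube is at most $d(\ell/L)^k|\Gamma|\le d\,C_0^{-(k-1)}\,\ell^{k-1}$, which for $C_0$ large is a small fraction of the $\asymp\ell^{k-1}$ unit cells on that subcube's boundary. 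Route each edge monotonically up the hierarchy from one endpoint to the least common ancestor subcube and back down, assigning it a distinct boundary cell at each boundary it crosses (and routing the $O(\ell^{k-1})$ edges crossing a given side-$\ell$ boundary through the interior of the associated slab in an orderly, e.g.\ space-filling, fashion). A fixed unit cube lies on the boundary of $O(1)$ subcubes at each of the $O(\log L)$ scales and collects $O(1)$ strands at each scale, giving multiplicity $O(\log|\Gamma|)$; the routes have total length $O(|\Gamma|L)=O(|\Gamma|^{1+1/(k-1)})$.

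\textbf{The main obstacle is removing the logarithm.} The point is that the wiring just constructed has total length $O(|\Gamma|^{1+1/(k-1)})=O(L^k)$, so the sum of multiplicities over all $\asymp L^k$ unit cubes is $O(L^k)$: the \emph{average} multiplicity is $O(1)$, and only a sparse set of ``hot'' cubes can exceed a large constant $M$. To make this quantitative I would randomise the boundary-cell assignments and the within-slab routings in the previous step and apply Chernoff/Azuma tail bounds, so that the expected number of cubes of multiplicity $>M$ is a $1/\mathrm{poly}(L)$ fraction of all cubes; fix such a routing. Then reroute the $O(\log|\Gamma|)$ offending strands through each hot cube by short local detours into neighbouring cubes, which have spare capacity $\asymp\varepsilon^{-(k-1)}$. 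Since hot cubes are sparse one can choose the detours of bounded length and --- organising the surgery scale by scale, and iterating a bounded number of times --- drive the maximum multiplicity down to a constant while increasing the total length by only a constant factor. The delicate part, where I expect the real work to lie, is exactly this clean-up: proving that boundedly many sweeps of local surgery simultaneously resolve all hot cubes without cascading, i.e.\ that absorbing detours never creates new hot cubes at the next stage; this is where the geometry of $\R^k$ (ample transverse room for $k\ge 3$) and the sparsity estimate must be combined carefully.

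Finally, the wiring of bounded multiplicity constructed above is in particular a coarse wiring of $\Gamma$ into $\R^k$ of volume $O(|\Gamma|^{1+1/(k-1)})$, which is the statement quoted in the abstract; optimality of both bounds follows from Theorem~\ref{thm:KBGGlb} applied to a bounded-degree expander family.
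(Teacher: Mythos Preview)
Your high-level architecture---build a bounded-multiplicity wiring into a grid of side $\asymp|\Gamma|^{1/(k-1)}$, then thicken cube-by-cube and rescale---matches the paper exactly; indeed the paper does precisely this, invoking Theorem~\ref{thm:thickemb} for the thickening and rescaling step. The divergence, and the gap, is in how you produce the bounded-multiplicity wiring.

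You first build a polylog-multiplicity wiring via a dyadic hierarchy and then propose to beat the multiplicity down to $O(1)$ by randomising the routing, applying Chernoff bounds to control the density of ``hot'' cubes, and iterating local surgery. You yourself flag the surgery as ``the delicate part, where I expect the real work to lie,'' and that is an honest assessment: you have not actually established that the detour process terminates in boundedly many rounds without creating new hot cubes, and there is no obvious reason it should without a much more careful accounting. As written this is a plan, not a proof.

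The paper sidesteps this entire difficulty with a strikingly simple one-shot construction. Place the vertices on one face of the cube, and for each edge draw a path of exactly $2k-1$ axis-parallel segments: from $f(x_{i,+})$ go distance $j_0$ in the $x_k$-direction, then $j_1$ in the $x_1$-direction, \dots, then $j_{k-2}$ in the $x_{k-2}$-direction, then correct coordinates $x_{k-1},x_{k-2},\dots,x_1,x_k$ in turn to hit $f(x_{i,-})$. Choosing $(j_0,\dots,j_{k-2})$ uniformly at random in $[0,CR]^{k-1}$, one checks segment-by-segment that the probability any segment overlaps a parallel segment of a previously chosen path is $O(d/C)$: the constraint that two parallel $x_m$-segments share all other coordinates forces agreement of $f(x_{i,\pm})$ with some $f(x_{i',\pm})$ in several coordinates, bounding the number of dangerous $i'$ by a power of $R$ exactly matched by the number of free $j$-choices. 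A union bound over $2k-1$ segments shows more than half the choices of $j$ give a path meeting all earlier paths only transversally (or in the $x_k$-direction at the shared endpoint), hence multiplicity $\le d+k$ everywhere. No hierarchy, no concentration, no surgery.

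So the missing idea is that a single random zig-zag with $O(k)$ turns already achieves bounded multiplicity directly; your hierarchical scheme is not wrong in spirit but is working much harder than necessary, and the step you identify as the obstacle is genuinely unfinished.
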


\subsection{Thick embeddings into symmetric spaces}
Our main results are analogues of Theorems \ref{thm:KBGGub} and \ref{thm:KBGGlb} for more general simply connected Riemannian symmetric spaces. Constructing graph embeddings into a range of symmetric spaces has applications for machine learning (see \cite{Lopez} and references therein).  In what follows we will assume that our symmetric spaces are simply connected and Riemannian. The \textbf{rank} of a symmetric space is the maximal dimension of an isometrically embedded Euclidean subspace. We recall that each symmetric space $X$ decomposes as a direct product of symmetric spaces $K\times\R^d\times N$ where $K$ is compact and $N$ has no non-trivial compact or Euclidean factor. In the literature, $N$ is often referred to as the \textbf{non-compact factor}. Our results show a striking contrast between the situation where the non-compact factor has rank at least $2$ and the situation where it has rank at most $1$. 

We begin with the case where the rank of $N$ is at least $2$, where we provide matching upper and lower bounds.

\begin{theorem}\label{thm:wirhighrkub} Let $X$ be a symmetric space whose non-compact factor has rank $\geq 2$ and let $d\in\N$. There are constants $\varepsilon,C>0$ which depend on $X$ and $d$ such that for any finite graph $\Gamma$ with maximal degree at most $d$, there is an $\varepsilon$-thick topological embedding of $\Gamma$ into $X$ with diameter $\leq C\ln(1+|\Gamma|)$ and volume $\leq C|\Gamma|\ln(1+|\Gamma|)$.
\end{theorem}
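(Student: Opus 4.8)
The plan is to work coarsely and build a suitable coarse wiring. Since the non-compact factor $N$ has rank at least $2$ we have $\dim X\geq\dim N\geq 4$, so the comparison between the optimal volume of a thick embedding and the optimal volume of a coarse wiring is available (it holds for symmetric spaces of dimension at least $3$); it therefore suffices to produce a coarse wiring of $\Gamma$ into $X$ with diameter $O(\ln(1+|\Gamma|))$, volume $O(|\Gamma|\ln(1+|\Gamma|))$, and bounded congestion, and then feed it back through that comparison.

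The heart of the matter is the construction of such a wiring, and this is where the higher-rank hypothesis enters. A metric ball of radius $R=O(\ln(1+|\Gamma|))$ in $X$ contains at least $|\Gamma|$ points that are pairwise $\varepsilon$-separated, by the exponential volume growth of the non-compact factor, so one may place the vertices of $\Gamma$ at such points and route each edge along a coarse geodesic of length $O(\ln(1+|\Gamma|))$; the diameter and total-length bounds are then immediate, and the only real issue is congestion. Routed naively this fails: exactly as in a rank-one space, coarse geodesics between generic points of such a ball concentrate near a single ``central'' region, so a graph with $\Omega(|\Gamma|)$ edges forces congestion $\Omega(|\Gamma|)$ there --- precisely the bottleneck responsible for the $|\Gamma|^{1+1/(k-1)}$ lower bounds in the rank-one case. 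The point of rank at least $2$ is that there are many ``parallel'' directions available along which the routing can be dispersed. Concretely, one first fixes a quasi-isometrically embedded model inside $X$ on which the combinatorics is transparent: in the reducible case, and in most irreducible cases, a product $Y_1\times Y_2$ of two unbounded rank-one symmetric spaces, hence --- since a regular tree quasi-isometrically embeds in $\mathbb{H}^2$ --- a product of two regular trees $T_q\times T_q$ with $q=q(d)$ taken large; root systems of type $A_2$ require a separate argument, since they contain only $\mathbb{H}^2\times\mathbb{R}$ totally geodesically. One then routes each edge through a carefully chosen intermediate vertex, distributing the choices (randomly, or via a recursive/hierarchical scheme in the spirit of Valiant's permutation routing), using the two independent tree directions and the freedom to take $q$ large to absorb the $O(d)$ edges incident to each vertex, so that every vertex of the model lies on only boundedly many edge-paths.

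I expect this congestion control to be the main obstacle: converting the abundance of flats in higher rank into an explicit bottleneck-free layout is exactly what separates this case from rank one, where such a construction provably cannot succeed. Once the wiring has the required diameter, volume and bounded congestion, the equivalence between thick embeddings and coarse wirings --- applicable because $\dim X\geq 3$ --- turns it into an $\varepsilon$-thick topological embedding of $\Gamma$ into $X$ of diameter $O(\ln(1+|\Gamma|))$ and volume $O(|\Gamma|\ln(1+|\Gamma|))$, as required.
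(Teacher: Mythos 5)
Your overall framework is the same as the paper's: construct a coarse wiring of $\Gamma$ into a graph quasi-isometric to (a subset of) $X$ with diameter $O(\ln|\Gamma|)$, volume $O(|\Gamma|\ln|\Gamma|)$ and bounded congestion, then convert it to an $\varepsilon$-thick embedding via Theorem~\ref{thm:thickemb} (using that $\dim X \geq 4 \geq 3$). However, the two steps you flag as the heart of the matter are precisely the ones you do not carry out, and in both places the paper does something materially different.

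First, the intermediate model. You propose to wire into a product of trees $T_q\times T_q$ and remark that irreducible spaces with root system $A_2$ ``require a separate argument,'' which you do not give; and the same concern applies to other irreducible higher-rank types, since a quasi-isometric embedding of $T_q\times T_q$ (equivalently $\HH^2\times\HH^2$) into an irreducible rank-$2$ symmetric space is not available in general. The paper sidesteps this entirely by wiring into a Cayley graph of the lamplighter group $\Z_2\wr\Z$, which is quasi-isometric to the Diestel--Leader graph $\DL(2,2)$ --- a \emph{horocyclic} product of two trees rather than a direct product, hence a much smaller target. The point is Theorem~\ref{thm:wirDL22} together with the cited result of \cite{HumeMackTess2} that $\DL(2,2)$ quasi-isometrically embeds into \emph{every} symmetric space whose non-compact factor has rank $\geq 2$, with no case analysis. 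Your approach requires a strictly stronger embedding statement which you have not established.

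Second, the wiring itself, which you explicitly defer as ``the main obstacle'' and sketch only via randomized or Valiant-style routing. The paper's Theorem~\ref{thm:wirDL22} gives a short deterministic construction: with $k=\lceil\log_2 n\rceil$, place $v_i$ at the lamplighter configuration obtained by writing the $k$-bit representation of $i$ twice as a length-$2k$ string (lamplighter at the left end), and route the edge $v_iv_j$ by a single left-to-right sweep that overwrites $i$'s bits with $j$'s in the first $k$ positions and then in the second, returning to the leftmost position. Bounded congestion then falls out of a bookkeeping observation: any edge on the path through the binary string determines either $i$ (from the untouched right half) or $j$ (from the rewritten left half), so at most $2d$ of the paths can use it. This is the crucial idea missing from your sketch; without it you have not proved bounded congestion, only asserted that higher rank ought to allow it.

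So the proposal correctly identifies the reduction to coarse wirings and correctly identifies congestion and the choice of higher-rank model as the two load-bearing points, but it leaves both unproved and, for the model, aims at a target whose universal embeddability is doubtful. The paper resolves both simultaneously by working with $\DL(2,2)$ and an explicit lamplighter wiring.
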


\begin{theorem}\label{thm:wirhighrklb} Let $X$ be a symmetric space whose non-compact factor has rank $\geq 2$ and let $d\in\N$. For any $d,\varepsilon,\delta>0$ there is a constant $c=c(d,\varepsilon,\delta)>0$ with the following property. 
	For any finite graph $\Gamma$ with maximal degree $d$ and Cheeger constant $h(\Gamma)\geq\delta$ every $\varepsilon$-thick\footnote{Unlike topological embeddings into Euclidean space, there does not seem to be an obvious way to relate the volumes of optimal topological embeddings with different thickness parameters.} topological embedding $g:\Gamma\to X$ satisfies $\vol(g)\geq c|\Gamma|\ln(1+|\Gamma|)$.
\end{theorem}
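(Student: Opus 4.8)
The plan is to work, as the paper suggests, with coarse wirings rather than directly with thick embeddings. By the comparison result quoted in the introduction — applicable since a non-compact factor of rank $\geq 2$ forces $\dim X\geq 3$ — it suffices to prove that every coarse wiring $w$ of $\Gamma$ into $X$ has volume $\geq c|\Gamma|\ln(1+|\Gamma|)$. Write $N=|\Gamma|$ and let $V$ be the volume of $w$, so that $V$ is comparable to the number of net points used by $w$, counted with multiplicity, hence $V\geq\sum_{e}\ell_e$ where $\ell_e$ is the length of the path carrying the edge $e$. Two easy observations frame the argument. First, since the vertex images are $\varepsilon$-separated and $X$ has bounded geometry, $w$ uses at least $c_0N$ distinct net points, so $V\geq c_0N$; this settles the theorem for bounded $N$ and at the end lets us write $\ln(1+N)$ in place of $\ln N$. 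Second, since $X$ has exponential volume growth (its non-compact factor is non-trivial), $N$ points at pairwise distance $\geq\varepsilon$ cannot lie in a ball of radius $o(\ln N)$; this is the only source of a logarithm in the problem. The core claim I would aim for is: there are constants $\alpha,c_1>0$ depending only on $\varepsilon,\delta,d$ (and $X$) such that at least $\alpha N$ of the edges of $\Gamma$ are carried by $w$ along paths of length $\geq c_1\ln N$. Granting this, $V\geq\sum_e\ell_e\geq\alpha c_1\,N\ln N$, and combining with $V\geq c_0N$ finishes the proof.

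To prove the core claim I would argue by contradiction. Suppose fewer than $\alpha N$ edges are carried along paths of length $>L:=c_1\ln N$. Delete from $\Gamma$ every vertex meeting such a long edge, and then iteratively delete vertices whose degree has dropped below half the original regularity; a standard expander-mixing estimate shows that the surviving induced subgraph $\Gamma'$ has $|V\Gamma'|\geq(1-O(\alpha))N$ and Cheeger constant $\geq\delta'=\delta'(\delta,d)>0$, and by construction every edge of $\Gamma'$ is carried by $w$ along a path of $X$-length $\leq L$. Now coarsen $w$ on the vertices of $\Gamma'$ to get a map $\bar w\colon\Gamma'\to\Gamma_L(X)$, where $\Gamma_L(X)$ is a fixed net of $X$ in which two net points are joined when they lie within $X$-distance $L$. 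This $\bar w$ is Lipschitz, because adjacent vertices of $\Gamma'$ have images within $X$-distance $L$; and it has multiplicity $\leq C_0(\varepsilon)$, because the vertex images are $\varepsilon$-separated. Hence $\bar w$ is a regular map, so by monotonicity of the separation profile under regular maps and the expansion of $\Gamma'$,
\[
\delta'\,|V\Gamma'|\ \lesssim\ \sep(\Gamma')\ \lesssim\ \sep_{\Gamma_L(X)}\!\bigl(C_0|V\Gamma'|\bigr)\ \lesssim\ \operatorname{poly}(L)\cdot\sep_X\!\bigl(\operatorname{poly}(L)\,N\bigr),
\]
the last step using that $\Gamma_L(X)$ is quasi-isometric to $X$ with constants $O(L)$. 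Since $L\asymp\ln N$, the right-hand side is $N$ times a power of $\ln N$ divided by whatever sub-linear gain $\sep_X$ supplies, and choosing $c_1$ small enough this contradicts the linear lower bound on the left — provided $\sep_X(m)$ genuinely beats $m$ by at least a fixed power of $\ln m$.

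So the main obstacle is the input on the separation profile of $X$, and this is exactly where the hypothesis $\operatorname{rank}\geq2$ enters. When the non-compact factor is reducible one can take the slab construction — delete, under projection to one factor, a bounded (in the argument, width-$L$) neighbourhood of a complete geodesic — and get $\sep_X(m)\lesssim\sqrt m\,\ln m$, which is amply sub-linear and makes the displayed inequality collapse. The genuinely hard case is an irreducible non-compact factor of higher rank, where the sharp estimate $\sep_X(m)\asymp m/\ln m$ (the $\Lambda^1$-Poincaré profile computed by Hume–Mackay–Tessera and its refinements, using the structure of maximal flats and Weyl chambers) is needed, and where one must also be more careful: because an optimal wiring has congestion $\sim\ln N$, the coarsening $\bar w$ is only regular after the peeling step has cut down to $\Gamma'$, and squeezing out the final logarithm in the irreducible case may require running the comparison through the $L^p$-profiles for large $p$ (where the cost of multiplicity $\kappa$ is only $\kappa^{1/p'}$) rather than through plain separation. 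For a non-compact factor of rank $\leq1$ the relevant profile is instead polynomial of exponent exceeding $1-\eta$ for the $\eta$ available here, so the contradiction never materialises — which is precisely the phenomenon responsible for the $N^a$ lower bounds in the low-rank regime rather than $N\ln N$. A secondary technical point, but a real one, is carrying out the peeling so that $\Gamma'$ simultaneously retains linear size, a definite Cheeger constant, and only short edges.
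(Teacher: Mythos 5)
Your overall strategy --- convert the thick embedding to a coarse wiring (Proposition~\ref{prop:coarse_wiring}), then derive a lower bound from separation profiles --- is the same as the paper's. The difference, and where the proposal breaks down, is the detour you take through the ``core claim'' about edge-length statistics and the coarsened graph $\Gamma_L(X)$. The paper never needs either, and the reason is instructive.

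The problematic step is
\[
\sep(\Gamma')\ \lesssim\ \sep_{\Gamma_L(X)}\bigl(C_0|V\Gamma'|\bigr)\ \lesssim\ \operatorname{poly}(L)\cdot\sep_X\bigl(\operatorname{poly}(L)\,N\bigr).
\]
Two issues. First, $\Gamma_L(X)$ does not have bounded degree independent of $L$: since $X$ has exponential volume growth, the number of net points within $X$-distance $L$ of a given point is on the order of $e^{cL}$, and with $L\asymp\ln N$ this is $N^{cc_1}$, i.e.\ polynomial in $N$. The standard monotonicity of separation profiles under regular maps carries a constant that depends on the degrees of both graphs, so invoking ``$\Gamma_L(X)$ is quasi-isometric to $X$ with constants $O(L)$'' does not yield an inequality with merely polylogarithmic loss --- the target graph's degree spoils it. Second, even granting $\operatorname{poly}(L)$ factors, the arithmetic does not close. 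With $\sep_X(m)\simeq m/\ln m$ and $L\asymp\ln N$, the right-hand side is of the order $(\ln N)^{a+b-1}N$, which exceeds the left-hand side $\delta'N$ whenever $a+b\geq 1$, and the exponents $a,b$ coming from the quasi-isometry constants are not under your control. You do seem to sense this (``squeezing out the final logarithm in the irreducible case may require running the comparison through the $L^p$-profiles''), but the issue is not a missing logarithm to be shaved --- the intermediate object $\Gamma_L(X)$ simply has the wrong separation behaviour at the scale you evaluate it. To make your route work one would need the sharp, non-formal estimate $\sep_{\Gamma_L(X)}(n)\lesssim L\,\sep_X(n)$ (linear, not higher-order, in $L$), and that requires a genuine geometric argument about slab-cuts in $\Gamma_L(X)$ rather than an appeal to quasi-isometric invariance.

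A second, smaller gap: your inequality $\sep(\Gamma')\lesssim\sep_{\Gamma_L(X)}(C_0|V\Gamma'|)$ tacitly assumes that pulling back a balanced cut of the image gives a balanced cut of $\Gamma'$. It does not: a component of the image complement can have half the image's vertices yet contain far more than half the preimage. This is exactly why the paper's Proposition~\ref{prop:sepLB} iterates the cutting (the divide-and-conquer sum $\sum_{s\geq 0}\sep_Y(2^{-s}\cdot)$), and you would need the same device.

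The paper's actual argument avoids all of this. Proposition~\ref{prop:coarse_wiring} gives a coarse $k$-wiring $\psi\colon\Gamma\to\Gamma'\subset Y$, where $Y$ is a \emph{fixed} bounded-degree graph quasi-isometric to $X$, with $\vol(\psi)=V\lesssim\vol(g)$. Proposition~\ref{prop:sepLB} then pulls cut sets of $\Gamma'$ back to $\Gamma$ with a loss of $k\Delta_Y$ (a constant independent of $N$ and of any length scale) and the balancing iteration yields $\cut(\Gamma)\leq k\Delta_Y\sum_s\sep_Y(2^{-s}V)$. Since $\sep_Y(n)\simeq n/\ln n$ for higher-rank $X$ (Hume--Mackay--Tessera) and $\cut(\Gamma)\gtrsim\delta N$ for an expander, Corollary~\ref{cor:sepLB} gives $V\gtrsim N\ln N$ directly. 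The logarithm in the answer comes from the separation profile of $Y$ at the scale of the wiring volume $V$; no edge-length bookkeeping and no $L$-coarsening are needed, and no $L$-dependent constants ever appear.
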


Now we turn to the case where the rank of $N$ is at most $1$. When $N$ is a real hyperbolic space, we also provide upper and lower bounds which match except in the case of the hyperbolic plane where there is a sublogarithmic gap.

\begin{theorem}\label{thm:wirehypxEucub} Let $X=\R^r\times\HH_\R^q$ where $q+r\geq 3$. Let $d\in\N$. There is a constant $C=C(X,d)$ such that for any finite graph $\Gamma$ with maximal degree at most $d$ there is a $1$-thick topological embedding of $\Gamma$ into $X$ with volume
\[
\leq C|\Gamma|^{1+1/(q+r-2)}.
\]
\end{theorem}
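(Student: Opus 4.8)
The plan is to work with coarse wirings rather than thick embeddings directly, and to transport the Euclidean coarse wiring supplied by the appendix into $X$ along a horospherical embedding. Write $n=\dim X=q+r\geq 3$. We may assume $q\geq 2$: for $q\leq 1$ the space $X$ is Euclidean of dimension $n\geq 3$ and the claim is immediate from the appendix (or from Theorem~\ref{thm:KBRn}, since $1+\tfrac1{n-1}\leq 1+\tfrac1{n-2}$). Since $n\geq 3$, the comparison between thick embeddings and coarse wirings applies, so it suffices to produce, for every finite graph $\Gamma$ of maximal degree $\leq d$, a coarse wiring of $\Gamma$ into $X=\R^{r}\times\HH_\R^q$ of volume $\leq C|\Gamma|^{1+1/(n-2)}$; the comparison then upgrades this to a $1$-thick topological embedding of comparable volume.

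The first step is to find a coarse wiring of $\Gamma$ into $\R^{n-1}=\R^{r}\times\R^{q-1}$ of volume $\leq C|\Gamma|^{1+1/(n-2)}$. For $n\geq 4$ this is exactly the appendix applied with $k=n-1\geq 3$. For $n=3$ one needs a coarse wiring of a bounded-degree $N$-vertex graph into $\R^{2}$ of volume $O(N^{2})$, which is elementary: place the vertices at $(1,0),\dots,(N,0)$ and route each of the at most $dN/2$ edges $\{i,j\}$ as a rectangular detour $(i,0)\to(i,h)\to(j,h)\to(j,0)$, using a distinct integer height $h$ for each edge; then every net point lies on at most one horizontal segment and on at most $d$ vertical segments, so the congestion is $O(d)$, each edge path has length $O(N)$, and the total volume is $O(N^{2})$.

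The second step is to push this wiring forward into $X$. Using the upper half-space model $\HH_\R^q=\{(x,t):x\in\R^{q-1},\ t>0\}$, consider $\iota\colon\R^{r}\times\R^{q-1}\to\R^{r}\times\HH_\R^q$ given by $\iota(u,x)=(u,(x,1))$: the identity on the Euclidean factor and the standard parametrisation of the horosphere $\{t=1\}$ on the other. From the identity $d_{\HH_\R^q}\big((x,1),(x',1)\big)=2\operatorname{arcsinh}\big(\tfrac12|x-x'|\big)$ one extracts three facts: (a) points at Euclidean distance $\geq 1$ map to points of $X$ at distance $\geq 2\operatorname{arcsinh}(1/2)>0$, so distinct vertices stay at distinct net points; (b) the $\iota$-image of any Euclidean unit ball has $X$-diameter bounded by a constant, hence meets boundedly many points of any fixed net of $X$, so the congestion grows by at most a bounded factor; (c) the metric that $\HH_\R^q$ induces on $\{t=1\}$ is flat, so $\iota$ preserves lengths of rectifiable curves, and the volume of the wiring changes by at most a bounded factor. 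Pushing the wiring of the first step forward along $\iota$ thus produces a coarse wiring of $\Gamma$ into $X$ of volume $\leq C|\Gamma|^{1+1/(n-2)}$, and the comparison theorem finishes the proof.

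The only step that is not routine is the volume control under $\iota$ in (b)--(c): one has to verify that the volume of a coarse wiring --- essentially the number of incidences between net points and edge paths, subject to a bounded-congestion constraint --- is multiplied by at most an $X$-dependent constant when the wiring is transported by $\iota$. This reduces to the elementary observation that on a horosphere the hyperbolic metric is bi-Lipschitz to the flat metric at unit scale (even though globally it is only logarithmically related to it), together with the bounded geometry of $\HH_\R^q$. Everything else --- keeping track of how the thickness parameter and the various multiplicative constants degrade, and absorbing them into $C=C(X,d)$ --- is bookkeeping.
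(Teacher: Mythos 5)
Your overall plan — embed the Euclidean $(q+r-1)$-space into $X$ along a horosphere, and carry an efficient Euclidean construction across that map — is the same as the paper's, but there is a genuine gap at the last step. You route everything through coarse wirings and then invoke the coarse-to-fine comparison to finish; but Theorem~\ref{thm:thickemb} only produces an $\varepsilon$-thick embedding for some constant $\varepsilon=\varepsilon(X,d,k)>0$, and \emph{not} a $1$-thick one. For Euclidean targets this is harmless, because one can rescale the ambient metric by $\varepsilon^{-1}$ to restore thickness $1$ at a controlled multiplicative cost in volume (this is exactly how Theorem~\ref{thm:KBRn} is deduced from the appendix). But rescaling $X=\R^r\times\HH^q_\R$ by a constant changes the curvature of the hyperbolic factor, so it does not produce a map into $X$ but into a \emph{different} space. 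Indeed the paper explicitly flags this obstruction in the footnote to Theorem~\ref{thm:wirhighrklb}: there is no obvious way to pass between optimal volumes at different thickness parameters once the target is non-Euclidean. So as written your argument proves an $\varepsilon$-thick version of the statement, not the $1$-thick version actually claimed.

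The paper avoids this by never leaving the category of thick embeddings: it takes the genuine $1$-thick Euclidean embedding $f_{q+r-1}$ of Theorem~\ref{thm:KBRn}, dilates it by $\sqrt2$ to make it $\sqrt2$-thick, and composes with the horospherical map $\phi_{q,r}(\underline x,\underline y)=(\underline x,(\underline y;h_0))$ at the \emph{specific} height $h_0=(2(\cosh 1-1))^{-1/2}$, chosen so that Euclidean displacement $\geq 1$ along the horosphere yields hyperbolic distance $\geq 1$. With that calibration, $\|(\underline x,\underline y)-(\underline x',\underline y')\|_2\geq\sqrt 2$ forces one of the two coordinate blocks to move by $\geq 1$, and either way $\phi_{q,r}$ sends the pair at least distance $1$ apart, so the composition is honestly $1$-thick. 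The volume bound then comes from the same net comparison you sketch in items (b)--(c). For $q+r=3$, where the appendix does not apply, the paper gives a separate direct $1$-thick construction (Theorem~\ref{thm:thickH3ub}) rather than routing through a coarse wiring into $\R^2$. If you want to repair your argument to match the statement, you should bypass the coarse-wiring detour in the last step: push the $\sqrt2$-rescaled Euclidean thick embedding forward along the calibrated horospherical map and verify $1$-thickness directly, rather than appealing to Theorem~\ref{thm:thickemb}.
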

For $q+r\geq 4$ this follows by composing the topological embedding from Theorem \ref{thm:KBRn} with a suitable coarse embedding $\R^r\times\R^{q-1} \to \R^r\times \HH_{\R}^q$ where $\R^{q-1}$ embeds as a horosphere in $\HH_{\R}^q$. The case $q+r=3$ is new and is treated separately (cf.\ Theorem \ref{thm:thickH3ub}).

The lower bound we prove holds more generally. The rank one symmetric spaces of non-compact type are real, complex and quaternionic hyperbolic spaces of dimension at least $2$ ($\HH^q_\R$, $\HH^q_\C$ and $\HH^q_\HH$ respectively) and the Cayley plane $\HH^2_\mathbb{O}$. These spaces are all Gromov-hyperbolic, and as such they have a naturally defined boundary. The \textbf{conformal dimension} of the boundary of $\HH^q_F$ is $Q=(q+1)\dim_\R(F)-2$ \cite{Pan-89-cdim}. We will not define conformal dimension in this paper as we do not require it.

\begin{theorem}\label{thm:wirehypxEuclb} Let $X=K\times \R^r \times \HH^q_{F}$, where $K$ is compact and $q\dim_\R(F)+r\geq 3$. Let $d\in\N$. Let $Q$ be the conformal dimension of the boundary of $\HH^q_{F}$. For any $d,\varepsilon,\delta>0$ there is a constant $c=c(d,\varepsilon,\delta)>0$ with the following property. 
For any graph $\Gamma$ with maximal degree $d$ and Cheeger constant $h(\Gamma)\geq\delta$ every $\varepsilon$-thick topological embedding $g:\Gamma\to X$ has volume
\[
\geq \left\{\begin{array}{lll}
  c|\Gamma|^{1+1/r}\ln(1+|\Gamma|)^{-1/r} & \textup{if} & Q=1, \\
  c|\Gamma|^{1+1/(Q+r-1)} & \textup{if} & Q\geq 2.
\end{array}\right.
\]
\end{theorem}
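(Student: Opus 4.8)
The plan is to transfer the problem to coarse wirings and then run a separation argument against the expansion of $\Gamma$, so that the geometric content becomes an upper bound on the separation profile of $X$. First I would observe that $q\dim_\R(F)+r\ge 3$ forces $\dim X\ge 3$, so by the comparison between the minimal volume of an $\varepsilon$-thick embedding of $\Gamma$ into $X$ and the minimal volume of a coarse wiring of $\Gamma$ into $X$ it is enough to bound below the volume of an \emph{arbitrary} coarse wiring $w$ of $\Gamma$ into $X$. Since the compact factor $K$ affects volumes and nets by at most a bounded multiplicative constant I would take $K$ trivial and work in $\R^r\times\HH^q_F$. Fixing a net in $X$ with bounded-degree net graph $G$, the wiring $w$ sends $V\Gamma$ into $VG$ with each point of $VG$ the image of at most $D=D(X,d,\varepsilon)$ vertices, and sends each edge of $\Gamma$ to a path in $G$ so that each vertex of $G$ lies on at most $D$ of these paths (bounded congestion). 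Letting $S\subseteq VG$ be the union of all the vertices and paths involved and $n=\abs S$, one has $\vol(w)\asymp n$.

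Next comes the cutting step. By definition of the separation profile, applied to the induced subgraph $G[S]$ and using $\sep_G\asymp\sep_X$, I can delete at most $\sep_X(n)$ vertices of $G[S]$ to disconnect it; iterating this on the heavier side so as to balance the $\le D$-bounded vertex multiplicities rather than the raw points of $S$ costs only a constant factor, since $r\ge 1$ whenever $Q=1$. This yields $S_0\subseteq S$ with $\abs{S_0}\lesssim_{D}\sep_X(n)$ whose deletion splits $S$ into two sides with no $G$-edge between them, each carrying the images of at least $\abs{V\Gamma}/3$ vertices of $\Gamma$. Every edge of $\Gamma$ joining the two sides has connected image in $S$, hence its path meets $S_0$, and bounded congestion then bounds the number of such edges by $D\abs{S_0}$; on the other hand $h(\Gamma)\ge\delta$ forces at least $\delta\abs{V\Gamma}/3$ edges across this balanced partition of $V\Gamma$. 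Hence $\abs{V\Gamma}\lesssim_{d,\delta,D}\sep_X(n)$, so $\vol(w)\asymp n\gtrsim\sep_X^{-1}(c\abs{V\Gamma})$.

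Finally I would feed in the separation profile of $\R^r\times\HH^q_F$. When $r=0$ (which forces $Q\ge 2$) this is the Poincar\'e-profile estimate of Hume--Mackay--Tessera, $\sep_{\HH^q_F}(n)\asymp n^{1-1/Q}$; when $r\ge 1$ one takes a product with $\R^r$, which raises the effective ``dimension'' by $r$ just as for Euclidean factors, giving $\sep_X(n)\asymp n^{1-1/(Q+r)}$ if $Q\ge 2$ and $\sep_X(n)\asymp n^{1-1/(r+1)}(\ln n)^{1/(r+1)}$ if $Q=1$ (the case $Q=1$ being $F=\R$, $q=2$, so $\HH^q_F=\HH^2_\R$ has profile $\asymp\ln n$). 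Substituting these into the inequality above, and replacing $\ln n$ by $\ln(1+\abs{V\Gamma})$ — legitimate because the claimed bound is trivial unless $\vol(w)$ is at most polynomial in $\abs{V\Gamma}$ — gives the two stated lower bounds. The hard part will be exactly this last input: producing the economical balanced cuts of $\R^r\times\HH^q_F$, in particular handling the product with $\R^r$ and the residual logarithm when $Q=1$, where one must use the concrete hyperbolic geometry — the flat horospherical foliation of $\HH^q_F$ and the Carnot structure carried by its horospheres — rather than any subspace comparison, since $\R^r\times\HH^q_F$ and $\R^{Q+r}$ are not coarsely comparable.
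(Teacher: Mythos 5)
Your route is essentially the paper's: convert the $\varepsilon$-thick embedding into a coarse wiring of $\Gamma$ into a bounded-degree graph $Y$ quasi-isometric to the symmetric space (Proposition~\ref{prop:coarse_wiring}), then force a lower bound on the wiring's volume by playing the expansion of $\Gamma$ against the separation profile of $Y$. The cutting argument you sketch is exactly the content of Proposition~\ref{prop:sepLB} and Corollary~\ref{cor:sepLB}: your iteration ``on the heavier side'' to rebalance the $\le D$-bounded multiplicities is the divide-and-conquer step, and your remark that this only costs a constant factor because $r\ge 1$ whenever $Q=1$ is precisely the observation that the separation exponent $p$ is strictly positive, so the sum over dyadic scales telescopes. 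You also state the separation profiles $\sep_Y(n)\simeq n^{1-1/(Q+r)}$ (for $Q\ge 2$) and $\sep_Y(n)\simeq n^{r/(r+1)}(\ln n)^{1/(r+1)}$ (for $Q=1$), and these are indeed the values that produce the exponents $1+1/(Q+r-1)$ and $1+1/r$ in the statement. Where the paper simply cites these profiles from \cite{BenSchTim,HumeMackTess1,HumeMackTess2} and proceeds via the ready-made Corollary~\ref{cor:sepLB}, you flag their computation as the outstanding ``hard part''; that is indeed the main external input, and there is no need to reprove it here. One minor slip: Proposition~\ref{prop:coarse_wiring} (thick to coarse) requires only controlled growth of the ambient manifold, not $\dim X\ge 3$; the dimension hypothesis in the theorem is what ensures that $\R^r\times\HH^q_F$ has a separation profile with strictly positive polynomial exponent (in particular excluding $\HH^2_\R$ itself), which is the case handled by the $p>0$ branch of Corollary~\ref{cor:sepLB}.
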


This ``gap'' between the rank at most $1$ and the higher rank case is similar in flavour to the gap in the separation profiles of symmetric spaces found in \cite{HumeMackTess2}. This is no coincidence. The lower bounds on the volumes of topological embeddings found in Theorems \ref{thm:wirhighrklb} and \ref{thm:wirehypxEuclb} are inverse functions of the separation profiles of the symmetric spaces\footnote{By the separation profile of a symmetric space we mean either the $1$-Poincar\'e profile of the symmetric space as defined in \cite{HumeMackTess1} or equivalently, the separation profile as defined in \cite{BenSchTim} of any graph quasi-isometric to the symmetric space.}, and our approach to prove both of these theorems utilises separation profiles in a crucial way. In order to use separation profiles, we will reformulate the above theorems in terms of carefully chosen continuous maps (called coarse wirings) between bounded degree graphs. \medskip

We present one further result in this section, which provides upper bounds for thick embeddings into $\HH^3_\R$ and $\HH^2_\R\times\R$. The first is asymptotically optimal and the second within a sublogarithmic error (with the lower bounds provided by \ref{thm:wirehypxEuclb}) but which do not depend on the degree of the graph.

\begin{theorem}\label{thm:thickH3ub}
There are $1$-thick topological embeddings of $K_M$ (the complete graph on $M$ vertices) into $\HH^3_\R$ with diameter $\leq C\ln(1+M)$ and volume $\leq CM^2$, and into $\HH^2_\R\times\R$ with diameter $\leq CM$ and volume $\leq CM^2$, for some $C$ which does not depend on $M$.
\end{theorem}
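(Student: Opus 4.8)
The plan is to build the two $1$-thick embeddings explicitly; the optimality and near-optimality mentioned in the surrounding text are separate, and come from the observation that $K_M$ contains bounded-degree expanders on $\Theta(M)$ vertices, so that any $1$-thick embedding of $K_M$ restricts to a $1$-thick embedding of such an expander and Theorem~\ref{thm:wirehypxEuclb} forces volume $\gtrsim M^{2}$ in $\HH^3_\R$ and $\gtrsim M^{2}\ln(1+M)^{-1}$ in $\HH^2_\R\times\R$. For the constructions themselves it is convenient to work coarsely: since $\dim\HH^3_\R=\dim(\HH^2_\R\times\R)=3$, the comparison between thick embeddings and coarse wirings quoted in the introduction (applied to $K_M$ directly, or after a routine reduction to the bounded-degree setting in which each vertex is replaced by a small bounded-degree ``distributor'' gadget) reduces matters to producing a coarse wiring of $K_M$ with the required volume and diameter into a fixed bounded-degree graph $G$ quasi-isometric to $\HH^3_\R$, respectively $\HH^2_\R\times\R$. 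For $G$ one can take a ``hyperbolic grid'' (vertices graded by a level $k\ge 0$, level $k$ a $2^{k}\times 2^{k}$ grid, each vertex joined to its four children) in the first case, and the product of a binary-tree-like graph with $\Z$ in the second. The organising idea is that the $M-1$ edges of $K_M$ meeting a common vertex may be routed together as a single bundle, because the thickness condition separates only \emph{disjoint} edges; so the real task is to route $M$ bundles of size $M-1$ in such a way that only boundedly many bundles come near any vertex of $G$. This bounded local crowding is exactly what makes the wiring induce a $1$-thick embedding, and it simultaneously keeps the volume comparable to the number of cells of $G$ that are used.

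For $\HH^3_\R$ I would place the $M$ vertices of $K_M$ at an intermediate level $\approx\tfrac12\log_2 M$ of the hyperbolic grid, so that they sit in a ball $B$ of radius $O(\log M)$ and any two are joined by a path of length $O(\log M)$; this already yields the diameter bound. The point is that $B$ has only $\Theta(M^{2})$ vertices, of the same order as $|E(K_M)|$, so there is essentially no slack and the wiring must use $B$ without wasting cells across scales. I would route the $\binom{M}{2}$ edges recursively, in the style of a butterfly or sorting network: split the $M$ vertices into two halves occupying two disjoint sub-balls, recurse on the two copies of $K_{M/2}$, and route the complete bipartite graph between the halves through the region separating the sub-balls; the crucial design choice is to arrange that the bipartite routings performed at different scales occupy disjoint families of grid cells, which is what keeps the grand total at $O(M^{2})$ rather than $O(M^{2}\log M)$. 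For $\HH^2_\R\times\R$ the construction can afford to be looser: use the tree-like geometry of the $\HH^2_\R$-factor to place the $M$ vertices in a region of diameter $O(\log M)$, and use a segment of length $\Theta(M)$ in the $\R$-factor to lay the $M$ bundles out as $\Theta(M)$ roughly parallel slices displaced along $\R$, so that bundles from distinct vertices meet only in bounded patches; here the diameter is dominated by the length $\Theta(M)$ of the $\R$-segment, and keeping the volume at $O(M^{2})$ is comparatively easy because of the extra room.

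The main obstacle, in both cases, is this routing: simultaneously achieving (a) bounded local crowding, (b) diameter $O(\log M)$ (resp.\ $O(M)$), and (c) volume $O(M^{2})$ \emph{with no extra logarithmic factor}. The last requirement is the delicate one --- every naive ``route each edge along a geodesic or tree-path'' strategy over-counts by a factor of $\log M$ at the bottleneck cells, and eliminating this is what forces the scale-disjoint recursive layout for $\HH^3_\R$ (for $\HH^2_\R\times\R$ the additional Euclidean direction supplies the missing room, at the cost of the larger diameter). Once such a routing is in place, the remaining steps are routine: one checks that the resulting map of $K_M$ into $G$, pushed across the quasi-isometry $G\to X$ and smoothed, is a genuine topological embedding with disjoint edges held distance $\ge 1$ apart (using bounded local crowding), and then reads off the diameter and the volume (the controlled growth of $X$ making the volume comparable to the number of cells used).
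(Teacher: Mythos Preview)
Your approach is quite different from the paper's, and the paper's route is much simpler. The paper does \emph{not} pass through coarse wirings or graph models for this theorem. Instead it gives a direct two-step construction: first build a $1$-thick embedding of $K_M$ into the Euclidean slab $[0,M-1]^2\times[0,1]$ with the $\ell^\infty$ metric, then push this into $\HH^3_\R$ via the horosphere-based map $(x,y,a)\mapsto(x,y;h_0e^{-a})$ in the upper-half-space model. The slab embedding is elementary: place $v_k$ at $(k,k,0)$ and join $(k,k,0)$ to $(l,l,0)$ by the piecewise-linear path $(k,k,0)\to(l,k,0)\to(l,k,1)\to(l,l,1)\to(l,l,0)$; one checks by hand that disjoint edges stay at $\ell^\infty$-distance $\ge 1$. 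The second step is the geometric insight you are missing: a Euclidean $M\times M$ square on a horosphere has \emph{hyperbolic} diameter $O(\log M)$, so the image of the $M\times M\times 1$ box lands in a hyperbolic ball of radius $O(\log M)$ and hence volume $O(M^2)$. The $\HH^2_\R\times\R$ case reuses the same slab embedding composed with $(x,y,a)\mapsto(x;y,h_0e^{-a})$; now only one horizontal coordinate is logarithmically compressed, giving diameter $O(M)$.

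Your proposal, by contrast, has a real gap: the routing you flag as ``the main obstacle'' is not carried out, only named. You correctly observe that naive routing overshoots by $\log M$ and invoke a ``scale-disjoint recursive layout'' to fix this, but you provide no actual construction, and this is precisely the non-trivial part. There are secondary problems too. Theorem~\ref{thm:thickemb} requires bounded degree on $\Gamma$, and your gadget reduction, while plausible, is not free (the gadgetised graph has $\Theta(M^2)$ vertices and the gadgets themselves must be wired). More seriously, Theorem~\ref{thm:thickemb} outputs an $\varepsilon$-thick embedding for some fixed $\varepsilon>0$, not a $1$-thick one, and in hyperbolic space there is no rescaling to upgrade $\varepsilon$ to $1$ --- the paper notes this issue explicitly. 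The horosphere construction sidesteps all of this: it is tuned to be exactly $1$-thick, and the logarithmic diameter comes for free from the geometry of horospheres rather than from any clever combinatorial routing.
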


\subsection{Coarse $k$-wirings}
\begin{definition}\label{defn:kwiring} Let $\Gamma,\Gamma'$ be graphs. A \textbf{wiring} of $\Gamma$ into $\Gamma'$ is a continuous map $f:\Gamma\to\Gamma'$ such that the image of each vertex is a vertex and the image of each edge is a walk in $\Gamma'$.

A wiring $f$ is a \textbf{coarse $k$-wiring} if
\begin{enumerate}
 \item the preimage of each vertex in $\Gamma'$ contains at most $k$ vertices in $\Gamma$; and
 \item each edge $e$ in $\Gamma'$ is contained in the image of at most $k$ edges in $\Gamma$.
\end{enumerate}
We consider the \textbf{image} of a wiring $\im(f)$ to be the subgraph of $\Gamma'$ consisting of all vertices in $f(V\Gamma)$ and all the walks which are the images of edges under $f$. The \textbf{diameter} of a wiring $\diam(f)$ is the diameter of its image (measured with respect to the shortest path metric in $\Gamma'$), the \textbf{volume} of a wiring $\vol(f)$ is the number of vertices in its image.
\end{definition}

Under mild hypotheses on the target space (cf.\ Definition \ref{defn:controlledgrowth}), we can convert a thick topological embedding into a coarse $k$-wiring.

\begin{proposition}\label{prop:coarse_wiring} Let $M$ be a Riemannian manifold with controlled growth and let $Y$ be a graph quasi-isometric to $M$, let $d\in\N$ and let $T>0$. There exist
    constants $C$ and $k$ such for every finite graph $\Gamma$ with maximal
    degree $d$ the following holds:

    If there is a $T$-thick topological embedding $\Gamma\to M$ with diameter $D$ and volume $V$
    then there is a coarse $k$-wiring of $\Gamma$ into $Y$ with diameter at most $CD$
    and volume at most $CV$.
\end{proposition}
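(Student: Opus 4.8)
The plan is to discretise the symmetric space $M$ by choosing a maximal $1$-separated net $\mathcal{N}\subseteq M$, which is quasi-isometric to $M$ and hence to $Y$; it suffices to build a coarse wiring into the net-graph (with edges joining net points at distance $\leq 3$, say) and then transport it across the quasi-isometry $\mathcal{N}\to Y$, absorbing the distortion into $C$ and $k$. Given the $T$-thick topological embedding $g\colon\Gamma\to M$, the first step is to replace $g$ by a combinatorially controlled map. For each vertex $v\in V\Gamma$ pick a net point $\phi(v)\in\mathcal{N}$ within distance $1$ of $g(v)$; for each edge $e$, subdivide the path $g(e)$ finely and approximate it by a walk in the net-graph starting at $\phi(u)$ and ending at $\phi(v)$ (where $e=\{u,v\}$), staying within bounded Hausdorff distance of $g(e)$. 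This yields a wiring $f\colon\Gamma\to\mathcal{N}$ with $\diam(f)\leq C D$ and image contained in a bounded neighbourhood of $\im(g)$, so $\vol(f)\leq C V$ using controlled growth of $M$.

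The substance is bounding the multiplicities, i.e.\ verifying conditions (1) and (2) of Definition \ref{defn:kwiring} for a uniform $k$. For (1), if $\phi(v_1)=\phi(v_2)$ for distinct vertices $v_1,v_2$, then $g(v_1)$ and $g(v_2)$ both lie within distance $1$ of a common net point, so $d_M(g(v_1),g(v_2))\leq 2$; by $T$-thickness this can only happen for finitely many vertices clustered in a ball of radius $2$, and controlled growth together with the fact that distinct vertices are $T$-separated bounds the number of such vertices by some $k_1=k_1(M,T)$. For (2), suppose a net edge $\{p,q\}$ lies on the approximating walks of many edges $e_1,\dots,e_m$ of $\Gamma$. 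Each such $e_i$ has a point $x_i$ with $g(x_i)$ within bounded distance $R$ of $p$. The thickness hypothesis forces the images of disjoint edges, and of an edge and a non-incident vertex, to be $T$-separated; hence among the $e_i$ the only way to have many of them passing near $p$ is for them to share endpoints. Since $\Gamma$ has maximal degree $d$, only boundedly many edges ($\leq d$) are incident to any given vertex, and only boundedly many vertices have $g$-image within distance $R+T$ of $p$ (again by controlled growth and $T$-separation). Combining, $m\leq k_2(M,T,d)$. Taking $k=\max\{k_1,k_2\}$ (and enlarging to account for the quasi-isometry to $Y$, which has bounded-to-bounded fibres) gives a coarse $k$-wiring.

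The main obstacle is step (2): a priori the approximating walk for a single edge $e$ could backtrack and traverse the same net edge many times, or two edges could run parallel along a long common stretch. The first issue is handled by choosing the approximating walks to be geodesics in the net-graph between consecutive ``milestones'' along $g(e)$ spaced at distance $\asymp 1$, so that each walk is a concatenation of uniformly short geodesics and no net edge is used more than a bounded number of times by a single $e$. The second — parallel edges — is exactly where $T$-thickness is essential: disjoint edges of $\Gamma$ stay $T$-apart in $M$, so their net approximations stay within bounded combinatorial distance only near shared vertices, of which there are $\leq d$ per vertex. One must be slightly careful that ``edge and non-incident vertex are $T$-apart'' and ``disjoint edges are $T$-apart'' together cover all cases of edges crowding near a fixed net edge; edges sharing a vertex are grouped by that vertex and counted via the degree bound. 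Everything else — the quasi-isometry transport, the volume estimate via controlled growth, the diameter estimate — is routine bookkeeping, with constants depending only on $M$, $d$, $T$ (and the quasi-isometry constants of $Y$, which depend only on $M$ and the choice of $Y$).
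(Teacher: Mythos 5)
Your overall strategy is the same as the paper's: discretise $M$ (you via a net, the paper via the quasi-isometry to $Y$ directly — a cosmetic difference), send vertices to nearby discrete points, approximate each edge path by a walk through milestones spaced at bounded distance, and bound multiplicities using $T$-thickness together with controlled growth. Your vertex-multiplicity bound, diameter bound, and volume bound are all correct; indeed the volume argument via ``image lies in a bounded neighbourhood of $\im g$, then count net points by packing'' is slightly cleaner than the paper's.

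There is, however, a genuine gap in your argument for condition $(2)$. You reduce to edges $e_1,\dots,e_m$ of $\Gamma$ whose approximating walks all pass through a fixed net edge at $p$, and you try to bound $m$ by counting vertices of $\Gamma$ whose $g$-image lies within distance $R+T$ of $p$ and multiplying by $d$. This does not work: if $e_i$ and $e_j$ share an endpoint $u$, nothing forces $g(u)$ to be anywhere near $p$. The paths $g(e_i)$ and $g(e_j)$ can leave $g(u)$, travel a long way, and only approach $p$ somewhere in their interiors; $T$-thickness is vacuous for a pair of edges sharing a vertex, and it gives no bound on $d_M(g(u),p)$. So your count of ``vertices near $p$'' misses precisely the shared endpoints you need. (Likewise the phrase ``the only way to have many of them passing near $p$ is for them to share endpoints'' is not literally correct — disjoint edges may both pass within $R$ of $p$ as long as $R\geq T/2$, which is typically the case since $R$ depends on the quasi-isometry constants — although this part is repairable by a packing argument, which is what you seem to intend.)

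The fix, and what the paper actually does, is to pack the marked points $x_{e_i}$ on the edge paths, not the vertices of $\Gamma$. All the $g(x_{e_i})$ lie in a ball of bounded radius about (a lift of) $p$. If $d_M(g(x_{e_i}),g(x_{e_j}))<T$ then by thickness $e_i$ and $e_j$ must share a vertex of $\Gamma$, and any fixed edge shares a vertex with at most $2(d-1)$ others; hence the balls $B(g(x_{e_i}),T/2)$ have covering multiplicity at most $2d-1$. Controlled growth then gives $m\cdot c_{T/2}\leq (2d-1)\,C_{R'+T/2}$ for a suitable $R'$, bounding $m$ by a constant depending only on $d$, $T$ and the growth constants of $M$, with no reference to vertices near $p$. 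With this replacement the rest of your proposal goes through. (Incidentally, your worry about a single walk backtracking over the same net edge is harmless: condition $(2)$ counts distinct edges of $\Gamma$, not traversals with multiplicity.)
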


With stronger hypotheses we are able to convert coarse $k$-wirings into thick topological embeddings.

\begin{theorem}\label{thm:thickemb}
Let $M$ be a compact Riemannian manifold of dimension $n\geq 3$, let $Y$ be a graph quasi-isometric to the universal cover $\widetilde{M}$ of $M$ and let $k,d\in\N$. There exist constants
    $C$ and $\varepsilon>0$ such that the following holds:

    If there is a coarse $k$-wiring of a finite graph $\Gamma$ with maximal degree $d$ into $Y$ with diameter $D$ and volume $V$ then there is a $\varepsilon$-thick embedding
    of $\Gamma$ into $\widetilde{M}$ with diameter at most $CD$ and volume at most $CV$.
\end{theorem}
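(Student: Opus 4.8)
The plan is to build the $\varepsilon$-thick embedding of $\Gamma$ into $\widetilde M$ by thickening the image of the coarse $k$-wiring $f\colon\Gamma\to Y$, using the local geometry of $\widetilde M$ as a "bounded amount of room" around each vertex of $Y$. First I would fix a quasi-isometry $\phi\colon Y\to\widetilde M$; since $M$ is compact, $\widetilde M$ has bounded geometry (it admits a cocompact isometric action by $\pi_1(M)$), so balls of a fixed radius $R$ around points of $\phi(VY)$ are uniformly bi-Lipschitz to the Euclidean $R$-ball in $\R^n$, and distinct vertices of $Y$ map to points that are uniformly separated and uniformly "coarsely dense" in $\widetilde M$. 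The key point is that $n\geq 3$: in each such $R$-ball we have a Euclidean (or hyperbolic-like) region of dimension at least $3$, into which we can embed, in a $1$-thick fashion, any bounded-degree "local picture" — namely the at most $k$ vertices of $\Gamma$ lying over a given vertex of $Y$ together with the at most $kd$ edge-segments passing through. This is exactly the classical Kolmogorov--Barzdin type fact that a graph on boundedly many vertices and edges of bounded degree embeds thickly into a fixed ball in $\R^k$, $k\geq 3$ (cf.\ Theorem \ref{thm:KBRn} applied to graphs of bounded size, or an elementary general position argument since $2\cdot 1 < 3$).

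The construction then proceeds vertex by vertex and edge by edge along $Y$. For each vertex $v\in VY$ choose a pairwise-disjoint collection of "ports" on the sphere of radius $R/2$ about $\phi(v)$, one port for each edge of $Y$ incident to $v$ that is used by $\im(f)$, with a uniform lower bound on the pairwise distance between ports (possible because the degree of $Y$ is bounded and $n\geq 2$ gives enough room on the sphere). Inside the ball $B(\phi(v),R/2)$ we place the local thick picture over $v$: the images of the vertices of $\Gamma$ mapping to $v$, and short thick arcs joining each such vertex-image to the appropriate ports, routing the at most $k$ strands that must exit through a given port so that they stay pairwise $\varepsilon$-separated — again using dimension $\geq 3$ to resolve crossings by general position. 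Along each edge $\{v,w\}$ of $Y$ in $\im(f)$ we run a bounded-length "cable" of the at most $k$ strands through it from the port at $v$ to the port at $w$, keeping the strands within the cable pairwise separated; since the edge is traversed by at most $k$ edges of $\Gamma$, the cable has bounded cross-sectional complexity and bounded length (the quasi-isometry constants of $\phi$ are uniform), so it embeds thickly in a uniform tubular neighbourhood of the geodesic $[\phi(v),\phi(w)]$. Choosing $R$ larger than the quasi-isometry and bounded-geometry constants guarantees that cables belonging to different edges of $Y$, and local pictures at different vertices, stay in disjoint regions and hence remain $\varepsilon$-separated; this gives a globally $\varepsilon$-thick topological embedding of $\Gamma$.

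Finally I would read off the quantitative bounds. The image of the constructed embedding is contained in the union of the balls $B(\phi(v),R)$ over $v\in\im(f)$ together with the tubes over edges of $\im(f)$; there are $V$ such vertices and $O(dV)$ such edges, each contributing volume $O(1)$ by bounded geometry, so $\vol(g)\leq CV$. Likewise $\im(g)$ is contained in the $R$-neighbourhood of $\phi(\im(f))$, whose diameter is $O(D)$ by the quasi-isometry, so $\diam(g)\leq CD$; all constants depend only on $M$, $Y$, $k$, $d$.

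The main obstacle I expect is the routing step: showing that the at most $k$ strands through each port, and the at most $k$ strands through each cable, can be realised disjointly and $\varepsilon$-thickly with uniformly bounded length inside a fixed-size region of $\widetilde M$, uniformly over all vertices and edges of $Y$ regardless of how $\Gamma$ sits over them. This is where $n\geq 3$ is essential (it fails for surfaces), and the cleanest way to handle it is to reduce, via the uniform bi-Lipschitz charts, to a single statement about thick embeddings of bounded-complexity graphs with prescribed boundary ports into a fixed ball in $\R^n$, which one can prove once and for all by a compactness or explicit general-position argument and then transport everywhere.
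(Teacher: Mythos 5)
Your overall strategy — use bounded geometry of $\widetilde M$ to place bounded-complexity ``local pictures'' near the image of each vertex of $Y$ and join them by ``cables'' along edges, exploiting $n\geq 3$ for general position — is a genuinely different route from the paper's. The paper does not work directly with the arbitrary quasi-isometry $Y\to\widetilde M$: it first uses the wiring-profile machinery (Proposition~\ref{prop:regular}, Corollary~\ref{cor:wirreg}) to replace $Y$ by the orbit graph $\calG_x^L$ of Lemma~\ref{lem:MilnorSvarc}, whose vertices form a $\pi_1(M)$-orbit in $\widetilde M$ and hence are automatically discrete and uniformly separated; then it upgrades the coarse $k$-wiring to an \emph{injective} wiring into a thickening $K_T(\calG_x^L)$ (Lemma~\ref{lem:injwiring}), which plays the same role as your ``ports'' and ``cables'' but is carried out combinatorially rather than geometrically; finally it constructs one $\pi_1(M)$-equivariant embedding of $K_T(\calG_x^L)$ into $\widetilde M$ (Lemma~\ref{lemma:equivariantembedding}), so that uniform $\varepsilon$-separation falls out of compactness of $M$ (only finitely many curves up to the deck action). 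Your approach instead gets uniformity from uniformly bi-Lipschitz charts, reducing to a single port-routing statement about thick embeddings into a fixed Euclidean ball. Both mechanisms are legitimate; equivariance buys a cleaner finiteness argument, whereas yours is more local and would generalise to bounded-geometry manifolds without cocompact actions, at the cost of having to formulate and prove the port-routing lemma (with prescribed boundary data on the sphere, not just a free thick embedding) and to match up ports consistently across the two ends of every edge of $Y$.

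There is one concrete step that would fail as written. You assert that ``distinct vertices of $Y$ map to points that are uniformly separated'' under the quasi-isometry $\phi$, and later that ``choosing $R$ larger \ldots guarantees that cables belonging to different edges of $Y$, and local pictures at different vertices, stay in disjoint regions.'' Neither is true: a quasi-isometry gives $d_{\widetilde M}(\phi(v),\phi(w))\geq \lambda^{-1}d_Y(v,w)-\lambda$, which is vacuous for $d_Y(v,w)$ small, so adjacent vertices of $Y$ may map arbitrarily close together (or to the same point); and increasing $R$ only makes the balls $B(\phi(v),R)$ overlap \emph{more}, not less. To repair this you must first pass to a uniformly separated net in $VY$ (equivalently, re-wire through a subgraph on a separated vertex set), which is exactly the role of Corollary~\ref{cor:wirreg} and $\calG_x^L$ in the paper's argument; after that, bounded geometry only gives you \emph{bounded overlap} of the balls, not disjointness, so the local routing lemma must be stated and proved in a form robust to boundedly many neighbouring local pictures intruding into each chart. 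With those two fixes your construction should go through and recover the stated bounds.
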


All of the symmetric spaces we consider are universal covers of compact Riemannian manifolds. The reason for working with universal covers of compact manifolds is to use compactness to deduce that finite families of curves which are disjoint are at least a uniform positive distance apart. We then use deck transformations of the universal cover to translate these curves and preserve this uniform disjointness.

Using Proposition \ref{prop:coarse_wiring} and Theorem \ref{thm:thickemb} we can prove Theorems \ref{thm:wirhighrkub}, \ref{thm:wirhighrklb}, \ref{thm:wirehypxEucub} and \ref{thm:wirehypxEuclb} purely in terms of coarse wirings. We introduce wiring profiles in order to discuss coarse wirings between infinite graphs.

\begin{definition} Let $\Gamma$ be a finite graph and let $Y$ be a graph. We denote by $\wir^k(\Gamma\to Y)$ the minimal volume of a coarse $k$-wiring of $\Gamma$ into $Y$. If no such coarse $k$-wiring exists, we say $\wir^k(\Gamma\to Y)=+\infty$.

Let $X$ and $Y$ be graphs. The $k$-\textbf{wiring profile} of $X$ into $Y$ is the function
\[
 \wir^k_{X\to Y}(n) = \max\setcon{\wir^k(\Gamma\to Y)}{\Gamma\leq X,\ |\Gamma|\leq n}.
\]
\end{definition}

A simple example of a situation where $\wir^k(\Gamma\to Y)=+\infty$ is when $\Gamma$ has a vertex whose degree is greater than $k^2$ times the maximal degree of $Y$.

The reason for working with wiring profiles is that they have three very useful properties. Firstly, wirings between graphs can be composed and there is a natural inequality which controls the volume of the composition.

\begin{proposition}\label{prop:composition}  Let $X,Y,Z$ be graphs. Suppose $\wir^k_{X\to Y}$ and $\wir^l_{Y\to Z}$ take finite values. Then
  \begin{align*}
    \wir^{kl}_{X\to Z}(n) \leq \wir^l_{Y\to Z}\left(\wir^k_{X\to Y}(n)\right).
  \end{align*}
\end{proposition}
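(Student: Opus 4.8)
The plan is to prove this by explicit composition of the two witnessing wirings, tracking the multiplicities carefully. Suppose $\Gamma \leq X$ with $|\Gamma| \leq n$. By definition of $\wir^k_{X\to Y}$, there is a coarse $k$-wiring $f : \Gamma \to Y$ with $\vol(f) = |\im(f)| \leq \wir^k_{X\to Y}(n)$. The image $\im(f)$ is a subgraph of $Y$ with at most $\wir^k_{X\to Y}(n)$ vertices. Now apply the hypothesis $\wir^l_{Y\to Z}$ takes finite values to the finite graph $\im(f)$: there is a coarse $l$-wiring $g : \im(f) \to Z$ with $\vol(g) = |\im(g)| \leq \wir^l_{Y\to Z}(|\im(f)|) \leq \wir^l_{Y\to Z}(\wir^k_{X\to Y}(n))$, using monotonicity of the wiring profile (which I would note is immediate from the definition as a max over subgraphs of bounded size, or simply state $\wir^l_{Y\to Z}$ is non-decreasing). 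The composition $g\circ f : \Gamma \to Z$ is then a wiring: vertices map to vertices, and an edge of $\Gamma$ maps under $f$ to a walk in $\im(f)$, each edge of which maps under $g$ to a walk in $Z$, so concatenating gives a walk in $Z$ as required.

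The key quantitative step is verifying that $g \circ f$ is a coarse $(kl)$-wiring, i.e. checking the two multiplicity conditions. For condition (1): a vertex $w$ of $Z$ has preimage $(g\circ f)^{-1}(w) = f^{-1}(g^{-1}(w))$; since $g$ is an $l$-wiring, $g^{-1}(w) \cap V(\im(f))$ has at most $l$ vertices, and since $f$ is a $k$-wiring, the $f$-preimage of each of these has at most $k$ vertices, giving at most $kl$ vertices total. For condition (2): an edge $e$ of $Z$ lies in the image under $g$ of at most $l$ edges of $\im(f)$; each such edge $e'$ of $\im(f)$ lies on a walk that is the $f$-image of some edge of $\Gamma$, and $e'$ is contained in the image of at most $k$ such edges of $\Gamma$. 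Hence $e$ is contained in the image under $g\circ f$ of at most $kl$ edges of $\Gamma$. Therefore $g\circ f$ is a coarse $(kl)$-wiring of $\Gamma$ into $Z$ with $\vol(g\circ f) = |\im(g\circ f)| \leq |\im(g)| \leq \wir^l_{Y\to Z}(\wir^k_{X\to Y}(n))$. Taking the maximum over all $\Gamma \leq X$ with $|\Gamma|\leq n$ yields the claimed inequality.

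I do not anticipate a genuine obstacle here; the only points requiring care are bookkeeping ones. The first is the subtlety that $g$ is applied to the graph $\im(f)$, which is a subgraph of $Y$ — one must confirm $\im(f) \leq Y$ so that $\wir^l_{Y\to Z}$ applies to it, which is immediate from the definition of the image of a wiring. The second is that the edge-multiplicity condition (2) refers to an edge of $Z$ being \emph{contained in} the image walk, not equal to it; chasing this through the composition requires noting that the image walk of an edge $e$ of $\Gamma$ under $g\circ f$ is the concatenation of the $g$-image walks of the edges along the $f$-image walk of $e$, so an edge of $Z$ appearing in it appears via some edge of $\im(f)$ on that $f$-walk. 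Keeping the counting at the level of "edge of $Z$ is on the image walk of edge $e'$ of $\im(f)$, which is on the image walk of edge $e$ of $\Gamma$" makes the bound $kl$ transparent. One should also double check that $\vol(g\circ f) \leq \vol(g)$ rather than needing the full bound on $\vol(g)$ as a function of $\vol(f)$: indeed $\im(g\circ f) \subseteq \im(g)$ since every vertex and every image walk of $g\circ f$ already appears in $\im(g)$, which is why the composition inequality has the nested form rather than a product form.
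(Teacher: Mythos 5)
Your proof is correct and follows essentially the same route as the paper's: compose the two witnessing wirings by concatenating image walks, and verify the vertex- and edge-multiplicity bounds are multiplicative. The paper's write-up is slightly terser but relies on the same observations, including the key fact that $\im(\psi'')\subseteq\im(\psi')$ so the volume bound is just $\vol(\psi')$.
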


Secondly, for bounded degree graphs, the wiring profile of $X$ into $Y$ grows linearly whenever there is a regular map from $X$ to $Y$.

\begin{definition}\label{defn:regular} Let $X,Y$ be metric spaces and let $\kappa>0$. A map $r:X\to Y$ is $\kappa$-regular if 
\begin{enumerate}
 \item $d_Y(r(x),r(x'))\leq \kappa(1+ d_X(x,x'))$, and
 \item  the preimage of every ball of radius $1$ in $Y$ is contained in a union of at most $\kappa$ balls of radius $1$ in $X$.
\end{enumerate}
\end{definition}
Quasi-isometric and coarse embeddings between bounded degree graphs are examples of regular maps.

\begin{proposition}
  \label{prop:regular}
  Let $X$ and $Y$ be graphs with maximal degree $\Delta>0$ and let $r: X\to Y$ be a
  $\kappa$-regular map. Then there exists $k=k(\kappa,\Delta)$ such that
  \begin{align*}
    \wir^k_{X\to Y}(n) \leq \left(\kappa+\frac12\right)\Delta n.
  \end{align*}
\end{proposition}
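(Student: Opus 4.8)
\emph{Proof idea.} The natural candidate wiring is $r$ itself: send each vertex $v$ of a subgraph $\Gamma\le X$ to $r(v)$, and each edge to a geodesic joining the images of its endpoints. We may assume $r$ maps vertices of $X$ to vertices of $Y$, and that $X$ is nonempty; then, since the $r$-preimage of the ball of radius $1$ about the image of any vertex is nonempty, the second regularity condition forces $\kappa\ge 1$, and of course $\Delta\ge1$. It suffices to exhibit, for every subgraph $\Gamma\le X$ with $m:=|V\Gamma|\le n$, a coarse $k$-wiring $f\colon\Gamma\to Y$ with $\vol(f)\le(\kappa+\tfrac12)\Delta m$, where $k=k(\kappa,\Delta)$ does not depend on $\Gamma$.

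So set $f(v)=r(v)$ for $v\in V\Gamma$ and, for each edge $e=\{u,v\}\in E\Gamma$, let $f|_e$ be a geodesic walk in $Y$ from $r(u)$ to $r(v)$. Since $u,v$ are adjacent in $X$, the first regularity condition bounds the length of this geodesic by $\kappa(1+1)=2\kappa$, so $f$ is a wiring. It is a coarse $k$-wiring: the $f$-preimage of a vertex $w$ of $Y$ lies in the $r$-preimage of the ball of radius $1$ about $w$, which by the second regularity condition is contained in at most $\kappa$ balls of radius $1$ in $X$, each with at most $\Delta+1$ vertices; and if an edge $\bar e$ of $Y$ lies on $f|_e$ for some $e=\{u,v\}$ then, $f|_e$ being a geodesic of length $\le 2\kappa$ through an endpoint $a$ of $\bar e$, both $r(u)$ and $r(v)$ lie in the ball of radius $2\kappa$ about $a$. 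Covering that radius-$2\kappa$ ball in $Y$ by a bounded number $N=N(\kappa,\Delta)$ of unit balls and pulling back, its $r$-preimage $S$ has at most $\kappa N(\Delta+1)$ vertices, each lying in at most $\Delta$ edges of $\Gamma$; since every edge of $\Gamma$ mapping over $\bar e$ has both endpoints in $S$, at most $\kappa N(\Delta+1)\Delta$ edges of $\Gamma$ do so. Let $k$ be the larger of these two explicit bounds.

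For the volume: the vertices of $\im(f)$ are the (at most $m$) images $r(v)$ together with the interior vertices of the geodesics $f|_e$. Each $f|_e$ has length $\le 2\kappa$, hence at most $2\kappa+1$ vertices, of which the two endpoints $r(u),r(v)$ are already accounted for; so it contributes at most $2\kappa-1$ new vertices (a quantity $\ge0$ since $\kappa\ge1$). Using the handshake bound $|E\Gamma|\le\tfrac12\Delta m$,
\[
\vol(f)\ \le\ m+\tfrac12\Delta m(2\kappa-1)\ =\ m\bigl(1+\kappa\Delta-\tfrac12\Delta\bigr)\ \le\ m\bigl(\kappa\Delta+\tfrac12\Delta\bigr)\ =\ \bigl(\kappa+\tfrac12\bigr)\Delta m,
\]
the last inequality using $\Delta\ge1$; and $m\le n$ gives the claim.

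The only step requiring genuine care is the verification of the second coarse-wiring condition, where one must upgrade the hypothesis ``preimages of unit balls are small'' to ``preimages of radius-$2\kappa$ balls are small'' via the bounded geometry of $Y$. Everything else is bookkeeping, though the volume arithmetic must be handled carefully — in particular keeping the edge-walks genuinely geodesic and $r$ vertex-to-vertex — in order to land on exactly the constant $\kappa+\tfrac12$.
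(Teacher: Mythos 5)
Your proposal is correct and follows essentially the same route as the paper's proof: map each vertex by $r$, send each edge to a geodesic (the paper: minimal path) between the images of its endpoints, bound the length of these paths by $2\kappa$ using the first regularity condition, and bound vertex and edge multiplicities by pulling back bounded-radius balls via the second regularity condition. If anything your volume accounting is slightly more careful — you separately count the at most $m$ vertex images and the interior vertices of the geodesics and then invoke $\Delta\ge1$, whereas the paper simply multiplies $(2\kappa+1)$ by $\tfrac12\Delta n$ (which silently neglects isolated vertices, though the final bound still holds for them since $\kappa,\Delta\ge1$); and your bound on the number of edges of $\Gamma$ covering a fixed edge of $Y$ correctly carries a final factor of $\Delta$ that the paper's stated $k=\kappa(1+\Delta)(1+\Delta^{2\kappa+1})$ appears to omit — immaterial since only the existence of some $k=k(\kappa,\Delta)$ is claimed.
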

These two propositions naturally combine to show that wiring profiles are well-behaved with respect to regular maps.

\begin{corollary}\label{cor:wirreg}
  Let $X$, $X'$, $Y$ and $Y'$ be graphs with maximal degree $\Delta$ and let $r_X: X'\to
  X$ and $r_Y: Y \to Y'$ be $\kappa$-regular maps. Then for every $k$ such
  that $\wir^k_{X\to Y}$ takes finite values there is some $l$ such that
  \begin{eqnarray}
    \wir^{l}_{X\to Y'}(n) & \leq & \left(\kappa+\frac12\right)\Delta \dot\wir^k_{X\to Y}(n).\label{eq:cwreg1} \\
    \wir^{l}_{X'\to Y'}(n)&  \leq & \left(\kappa+\frac12\right)\Delta \dot\wir^k_{X\to Y}\left(\left(\kappa+\frac12\right)\Delta n\right).\label{eq:cwreg2}
  \end{eqnarray}
\end{corollary}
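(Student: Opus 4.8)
The plan is to stack the two previous propositions: Proposition~\ref{prop:regular} converts each of the regular maps $r_Y$ and $r_X$ into a linear bound on the corresponding wiring profile, and Proposition~\ref{prop:composition} lets us paste these linear bounds onto $\wir^k_{X\to Y}$. Alongside this I will use two elementary monotonicity facts: for fixed graphs $\wir^m_{\bullet\to\bullet}(n)$ is non-decreasing in $n$ (it is a maximum over an increasing family of subgraphs), and it is non-increasing in the index $m$, since a coarse $m$-wiring is \emph{a fortiori} a coarse $m'$-wiring whenever $m'\ge m$ (the conditions in Definition~\ref{defn:kwiring} are upper bounds).

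First I would apply Proposition~\ref{prop:regular} to $r_Y\colon Y\to Y'$ to obtain $l_Y=l_Y(\kappa,\Delta)$ with $\wir^{l_Y}_{Y\to Y'}(n)\le(\kappa+\tfrac12)\Delta n$; in particular this profile is finite-valued. Since $\wir^k_{X\to Y}$ is finite-valued by hypothesis, Proposition~\ref{prop:composition} applies to $X\to Y\to Y'$ with parameters $k$ and $l_Y$ and gives
\[
\wir^{kl_Y}_{X\to Y'}(n)\ \le\ \wir^{l_Y}_{Y\to Y'}\left(\wir^k_{X\to Y}(n)\right)\ \le\ (\kappa+\tfrac12)\Delta\,\wir^k_{X\to Y}(n).
\]
Next I would apply Proposition~\ref{prop:regular} to $r_X\colon X'\to X$ to obtain $l_X=l_X(\kappa,\Delta)$ with $\wir^{l_X}_{X'\to X}(n)\le(\kappa+\tfrac12)\Delta n$, and apply Proposition~\ref{prop:composition} once more, now to $X'\to X\to Y'$ with parameters $l_X$ and $kl_Y$ (both profiles are finite-valued), to obtain
\[
\wir^{l_X k l_Y}_{X'\to Y'}(n)\ \le\ \wir^{kl_Y}_{X\to Y'}\left(\wir^{l_X}_{X'\to X}(n)\right)\ \le\ \wir^{kl_Y}_{X\to Y'}\left((\kappa+\tfrac12)\Delta n\right),
\]
the last step using monotonicity of $\wir^{kl_Y}_{X\to Y'}$ in its argument. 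Feeding the first display (evaluated at $(\kappa+\tfrac12)\Delta n$) into the right-hand side bounds it by $(\kappa+\tfrac12)\Delta\,\wir^k_{X\to Y}\left((\kappa+\tfrac12)\Delta n\right)$.

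Finally I would set $l=k\,l_Y\,l_X$. As $l\ge kl_Y$, monotonicity in the index upgrades the first display to $\wir^{l}_{X\to Y'}(n)\le(\kappa+\tfrac12)\Delta\,\wir^k_{X\to Y}(n)$, which is~\eqref{eq:cwreg1}; and the second display already carries the index $l_X k l_Y=l$, giving~\eqref{eq:cwreg2}. There is no genuine obstacle here: the mathematical content is entirely contained in Propositions~\ref{prop:composition} and~\ref{prop:regular}. The only points needing care are checking the finiteness hypothesis of Proposition~\ref{prop:composition} before each application, and bookkeeping the wiring indices so that one choice of $l$ works for both inequalities simultaneously (which is exactly why the monotonicity of $\wir^m$ in $m$ is invoked).
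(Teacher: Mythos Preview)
Your proposal is correct and is precisely the argument the paper has in mind: its proof of Corollary~\ref{cor:wirreg} reads in full ``This follows immediately from Propositions~\ref{prop:regular} and~\ref{prop:composition}.'' You have simply unpacked that sentence, including the bookkeeping with indices and the monotonicity observations needed to make a single $l$ work for both inequalities.
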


The third benefit of coarse wirings is that we can find lower bounds on the wiring profile of two bounded degree graphs in terms of their separation profiles: a measure of the combinatorial connectivity of their finite subgraphs introduced in \cite{BenSchTim}. We introduce the following notation from that paper. Given two functions $f,g:\N\to\R$, we write $f\lesssim g$ if there is a constant $C$ such that $f(n)\leq Cg(Cn)+C$ holds for all $n$. We write $f\simeq g$ when $f\lesssim g$ and $g\lesssim f$.

\begin{theorem}\label{thm:wirsep} Let $X$ and $Y$ be graphs of bounded degree where $\sep_X\gtrsim n^r\ln(n)^s$ and $\sep_Y\simeq n^p\ln(n)^q$. Then, for any $k$, 
\[
wir^k_{X\to Y}(n)\gtrsim \left\{ 
\begin{array}{lll} 
 n^{r/p}\ln(n)^{(s-q)/p} & \textup{if} & p>0, \\
 \exp(n^{r/(q+1)}\ln(n)^{s/(q+1)}) & \textup{if} & p=0. 
 \end{array}\right.
 \]
\end{theorem}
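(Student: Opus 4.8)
The plan is to convert the hypothesis on $\sep_X$ into a lower bound on the image size --- hence the volume --- of any coarse $k$-wiring into $Y$, using the principle that a coarse $k$-wiring pulls a separator of its image back to a separator of its domain with only a bounded multiplicative loss. Fix $n$; using $\sep_X\gtrsim n^r\ln(n)^s$, choose a finite subgraph $\Gamma\leq X$ with $m:=|\Gamma|\leq n$ and $\sep(\Gamma)\gtrsim n^r\ln(n)^s$. We may assume $\wir^k(\Gamma\to Y)<\infty$, since otherwise $\wir^k_{X\to Y}(n)=+\infty$ and there is nothing to prove; fix a coarse $k$-wiring $f:\Gamma\to Y$ of minimal volume $V$ and put $W=\im(f)$, so $|W|=V$.

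The technical core is the following pullback lemma: given a finite subgraph $W^{\ast}\leq Y$ containing $W$ and a subset $S\subseteq VW^{\ast}$, set $S'\subseteq V\Gamma$ to be $f^{-1}(S)$ together with all vertices of $\Gamma$ incident to an edge whose image walk meets $S$. Condition $(1)$ of a coarse $k$-wiring gives $|f^{-1}(S)|\leq k|S|$, and condition $(2)$ together with the bounded degree of $Y$ bounds by a constant multiple of $|S|$ the number of edges of $\Gamma$ whose image walk meets $S$ --- a walk through a vertex of $S$ as an interior point uses one of the boundedly many edges of $Y$ there, each lying on at most $k$ image walks, while a walk with a vertex of $S$ as an endpoint forces that vertex to be $f(v)$ for one of at most $k$ vertices $v$, each incident to boundedly many edges. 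Thus $|S'|\leq C_1|S|$ with $C_1$ depending only on $k$ and the maximal degrees of $X$ and $Y$. The point of including the extra vertices is that every connected component $A$ of $\Gamma\setminus S'$ has all its vertex-images outside $S$ and all its edge-walks disjoint from $S$, so $f(A)$ lies inside a single connected component $W''$ of $W^{\ast}\setminus S$; since each vertex of $Y$ has at most $k$ preimages, $|A|\leq k|W''|$.

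Iterating, with $S$ always a near-optimal separator that halves piece sizes: at stage $i$ we have a subgraph $A_i\subseteq\Gamma$ (with $A_0=\Gamma$) mapping into a subgraph $W_i\subseteq W$ with $|W_i|\leq V/2^i$; applying the lemma to a separator of $W_i$ of size $\leq\sep_Y(|W_i|)$ produces $S_i\subseteq VA_i$ with $|S_i|\leq C_1\sep_Y(V/2^i)$, after whose removal every component of $A_i$ has at most $kV/2^{i+1}$ vertices. At most one such component has more than $m/2$ vertices (the components are disjoint subgraphs of the $m$-vertex graph $\Gamma$); if one does, call it $A_{i+1}$, let $W_{i+1}$ be the component of $W_i$ minus its separator containing $f(A_{i+1})$, and continue, stopping once no component exceeds $m/2$, which happens for some $I\lesssim\log_2(1+kV/m)$. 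Then $S_0\cup\dots\cup S_I$ separates $\Gamma$ into pieces of size $\leq m/2$, so $\sep(\Gamma)\leq C_1\sum_{i=0}^{I}\sep_Y(V/2^i)$. Substituting $\sep_Y(t)\simeq t^p\ln(t)^q$: when $p>0$ the factors $2^{-ip}$ have bounded sum, so $\sep(\Gamma)\lesssim V^p\ln(V)^q\simeq\sep_Y(V)$, and combining with $\sep(\Gamma)\gtrsim n^r\ln(n)^s$ and $\ln V\asymp\ln n$ yields $\wir^k_{X\to Y}(n)\geq V\gtrsim n^{r/p}\ln(n)^{(s-q)/p}$; when $p=0$ there are $I\lesssim\ln V$ terms each $\lesssim\ln(V)^q$, so $\sep(\Gamma)\lesssim\ln(V)^{q+1}$, whence $\ln V\gtrsim(n^r\ln(n)^s)^{1/(q+1)}$ and $V\gtrsim\exp(n^{r/(q+1)}\ln(n)^{s/(q+1)})$.

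I expect the main obstacle to be the pullback lemma --- checking that deleting $S'$ really does confine each component of the domain to a single component of $W^{\ast}\setminus S$ even though edges may be routed along arbitrarily long walks, while keeping $C_1$ dependent only on $k$ and the degrees; this is the only place where both defining conditions of a coarse $k$-wiring, and the bounded degree of $Y$, are needed. The other key point --- elementary, but responsible for the sharpness of the exponent rather than a weaker polynomial bound --- is that at each stage only one piece can exceed $m/2$ vertices, so the separators $S_i$ decay geometrically like $\sep_Y(V/2^i)$ instead of accumulating across many branches. Routine work remains to handle the logarithmic factors carefully in the borderline cases $q<0$ and $p=1$ (using $|W_i|\geq m/k$ throughout, together with $m\gtrsim\sep(\Gamma)$, to control the logarithms) and to justify the reduction to the $\sep$-extremal subgraph $\Gamma$.
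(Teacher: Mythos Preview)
Your proposal is correct and follows essentially the same approach as the paper's proof (Proposition~\ref{prop:sepLB} and Corollary~\ref{cor:sepLB}): choose $\Gamma\leq X$ with near-maximal cut, take a minimal-volume coarse $k$-wiring with image $W$ of size $V$, iteratively halve the image by separators of size $\leq\sep_Y(V/2^i)$, pull back the accumulated separator to a cut set of $\Gamma$ using the two coarse-wiring conditions together with the degree bound on $Y$, and then analyse the resulting inequality $\sep_X(n)\lesssim\sum_{i\geq 0}\sep_Y(V/2^i)$ in the two regimes $p>0$ and $p=0$. The only organisational differences are that the paper iterates purely on the image side and pulls back once at the end, while you track domain and image subgraphs in parallel; and the paper handles the inversion of $V\mapsto V^p\ln(V)^q$ by a clean contradiction argument rather than via your shortcut $\ln V\asymp\ln n$ --- both are cosmetic.
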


The separation profiles of (graphs quasi-isometric to) symmetric spaces have all been calculated \cite{BenSchTim,HumeMackTess1,HumeMackTess2} and are all of the form $n^p\ln(n)^q$. Combining these calculations with Theorem \ref{thm:wirsep} and Theorem \ref{thm:thickemb} is sufficient to prove Theorems \ref{thm:wirhighrklb} and \ref{thm:wirehypxEuclb}. \medskip

The coarse geometric approach also has great benefits when computing upper bounds. For instance, we can deduce the upper bound on volumes of thick embeddings in Theorem \ref{thm:wirhighrkub} from the following theorem.
\begin{theorem}\label{thm:wirDL22} There is a Cayley graph $Y$ of the lamplighter group $\Z_2\wr\Z$ with the following property. For each $d\in\N$ there is some $C=C(d)$ such that for any $N$-vertex graph $\Gamma$ with maximal degree $d$, we have
\[
 \wir^{2d}(\Gamma\to Y) \leq CN\ln(1+N).
\] 
\end{theorem}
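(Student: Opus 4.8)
The plan is to exploit the explicit combinatorial structure of the lamplighter graph, which contains (up to bounded distortion) a rearrangeable permutation network of logarithmic depth, and to wire $\Gamma$ by realising $O(d)$ matchings through it.

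\textbf{Choice of $Y$ and the routing network.} Take $Y$ to be a Cayley graph of $\Z_2\wr\Z$ with the ``shift and toggle'' generating set (or a minor variant, using the freedom in the statement to pick a convenient set of generators), and write an element as $(f,s)$ with $f\colon\Z\to\Z_2$ finitely supported and $s\in\Z$ the position of the lamplighter. Because at position $s$ one can toggle only lamp $s$, the part of $Y$ fibring over an interval $[a,b]$ of positions is, up to bounded distortion, a layered binary ``split/merge'' network on the $2^{b-a}$ configurations supported there; concatenating the networks attached to several such intervals yields a graph $\mathcal N$ that is a rearrangeable permutation network of depth $O(\log N)$ on $\asymp N$ ``tracks'', coarsely embedded in $Y$. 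Such an $\mathcal N$ has $O(N\ln(1+N))$ vertices and lies in a ball of radius $O(\log N)$ in $Y$.

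\textbf{Wiring $\Gamma$.} Choose a proper edge colouring of $\Gamma$ with $2d$ colours (a greedy colouring suffices), partitioning $E\Gamma$ into partial matchings $M_1,\dots,M_{2d}$, and fix an injection of $V\Gamma$ onto a set of $N$ tracks; this injection is the vertex map of the wiring $f$. Each $M_i$ is a partial permutation of the tracks, so by rearrangeability it is realised by a family of internally disjoint paths of length $O(\log N)$ in one ``layer'' of $\mathcal N$, i.e.\ a congestion-$1$ routing of the graph $(V\Gamma,M_i)$; keeping the $2d$ layers edge-disjoint except at the common copy of the input tracks produces a single wiring $f\colon\Gamma\to Y$. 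Condition~(1) of Definition~\ref{defn:kwiring} holds since $f|_{V\Gamma}$ is injective, and condition~(2) holds with $k=2d$ because at most $2d$ layers pass through any edge of $Y$ and each contributes congestion $1$. Finally the image of $f$ is the union of $f(V\Gamma)$ with $\leq dN$ paths of length $O(\log N)$, so $\vol(f)\leq C(d)N\ln(1+N)$, as required.

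\textbf{Alternative argument and the main obstacle.} The required routing can also be obtained in the manner of Kolmogorov--Barzdin: placing $V\Gamma$ uniformly at random in a ball $B$ of $Y$ of radius $\asymp\log N$ and volume $\asymp N\ln(1+N)$, the isoperimetric inequality of the lamplighter forces every roughly balanced cut of $B$ to have boundary $\gtrsim N$, which is exactly the number of $\Gamma$-edges one expects across such a cut, so a congestion-$O(d)$ routing should exist. Either way, the delicate point is getting congestion $O(d)$ rather than $O(d\log N)$: routing $\Theta(dN)$ demands with bounded congestion through a bounded-degree graph on $\Theta(dN\log N)$ vertices is strictly stronger than the cut condition guarantees abstractly — a Leighton--Rao type estimate loses a logarithmic factor — so one cannot treat $Y$ as a black box but must use its explicit recursive structure: identifying a genuinely rearrangeable network inside the Cayley graph and tracking the bounded distortion so that ``internally disjoint paths in $\mathcal N$'' become ``bounded congestion in $Y$'', and checking that the $2d$ layers can share the input tracks without creating a bottleneck there. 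The remaining ingredients — the volume and radius estimates for $\mathcal N$, the edge colouring, and the bookkeeping in Definition~\ref{defn:kwiring} — are routine.
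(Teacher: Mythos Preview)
Your strategy—reduce to matchings by edge colouring, then route each matching with congestion $1$ through a rearrangeable network sitting inside $Y$—is sound in outline, and you correctly identify that the crux is exhibiting such a network in the lamplighter. But you then assert this rather than prove it: ``concatenating the networks attached to several such intervals yields a graph $\mathcal N$ that is a rearrangeable permutation network'' is the whole theorem. A single butterfly layer is \emph{not} rearrangeable; one needs a Bene\v{s}-type doubling, and you do not say which intervals are concatenated, in which order, or why the result routes arbitrary permutations. You also need to check that the ``bounded distortion'' of the embedding $\mathcal N\hookrightarrow Y$ carries edge-disjoint paths to bounded-congestion paths, which is not automatic. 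You flag all of this as ``the delicate point'' and then stop; what you have is a plan for a proof, with the one nontrivial step left open.

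The paper bypasses rearrangeability and edge colouring entirely with a short explicit construction. Set $k=\lceil\log_2 N\rceil$, write each $i$ in binary as $i_0\cdots i_{k-1}$, and send $v_i$ to the element whose lamp configuration on $\{0,\dots,2k-1\}$ is $i_0\cdots i_{k-1}i_0\cdots i_{k-1}$ (the binary of $i$ written \emph{twice}), with the lamplighter at position $0$. For an edge $v_iv_j$ the path $P_{ij}$ walks right to position $2k-1$, correcting entries as it goes—first block to $j$'s binary, then second block—and then walks the lamplighter back. The point is that at every vertex of $P_{ij}$ either the second block still reads $i$ or the first block already reads $j$; hence any edge of $Y$ lying on $P_{ij}$ determines one endpoint of the $\Gamma$-edge, and degree $\le d$ immediately gives congestion $\le 2d$. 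No edge colouring, no Slepian--Duguid: the $2d$ bound comes from the doubled encoding rather than from superposing $2d$ congestion-$1$ routings. If you want to complete your argument, this construction is effectively the explicit Bene\v{s}-style network you were looking for, and the ``recover $i$ or $j$'' observation is what replaces the rearrangeability theorem.
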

The deduction works as follows. The graph $Y$ is quasi-isometric to the Diestel-Leader graph $\DL(2,2)$ \cite{Woess-LAMPLIGHTERS_HARMONIC_FUNCTIONS}. Next, $\DL(2,2)$ quasi-isometrically embeds into any symmetric space $M$ whose non-compact factor has rank $\geq 2$ \cite[Proposition 2.8 and Theorem 3.1]{HumeMackTess2}. Choose a graph $X$ which is quasi-isometric to $M$. By Corollary \ref{cor:wirreg}, there are constants $l,C'$ which depend on $Y$ and $d$ but not $N$ such that $\wir^l(\Gamma\to X) \leq C'N\ln(1+N)$. Theorem \ref{thm:wirhighrkub} then follows from Theorem \ref{thm:thickemb} and Theorem \ref{thm:wirDL22}. \medskip

It is important to stress that the analogy between thick embeddings and coarse wirings only holds when there is a bound on the degree of the graphs and the manifold dimension of the symmetric space is at least $3$. This is evidenced by Theorem \ref{thm:thickH3ub} which holds independent of the degree of the graph, where no such result for coarse wirings is possible. On the other hand, in section \ref{sec:2dim}, we will consider coarse wirings into $\R^2$ and $\HH_\R^2$ where only planar graphs admit topological embeddings.

\begin{theorem}\label{thm:wirplanar}
Let $d\geq 3$ and let $X(d)$ be the disjoint union of all finite graphs with maximal degree $\leq d$. Let $Y$ and $Z$ be graphs which are quasi-isometric to $\R^2$ and $\HH_\R^2$ respectively. For all sufficiently large $k$, we have
\[
 \wir^{k}_{X(d)\to Y}(n) \simeq n^2 \quad \textup{and} \quad 
 \exp(n^{1/2})\lesssim \wir^{k}_{X(d)\to Z}(n) \lesssim \exp(n).
\]
\end{theorem}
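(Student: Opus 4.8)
The plan is to establish the two statements about $\wir^k_{X(d)\to Y}$ and $\wir^k_{X(d)\to Z}$ separately, in each case proving a matching upper and lower bound. For the Euclidean target $Y$, I would first observe that for the lower bound $\wir^k_{X(d)\to Y}(n)\gtrsim n^2$ it suffices to produce a single family of bounded-degree graphs $\Gamma_n$ with $|\Gamma_n|\simeq n$ whose coarse $k$-wirings into $Y$ must have volume $\gtrsim n^2$. Expanders with degree $\leq d$ are the natural candidates: since $\sep_{\Gamma_n}(m)\simeq m$ for an expander family (their finite subgraphs include the expanders themselves, which have linear separation profile), Theorem \ref{thm:wirsep} applied with $X$ the disjoint union of these expanders and $Y$ quasi-isometric to $\R^2$ (for which $\sep_Y(n)\simeq n^{1/2}$, so $p=1/2$, $q=0$) gives $\wir^k_{X\to Y}(n)\gtrsim n^{1/(1/2)}=n^2$; since $X\leq X(d)$, the same bound passes to $\wir^k_{X(d)\to Y}$. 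For the upper bound $\wir^k_{X(d)\to Y}(n)\lesssim n^2$ one needs that \emph{every} graph with maximal degree $\leq d$ on $\leq n$ vertices admits a coarse $k$-wiring into $Y$ of volume $O(n^2)$ for some fixed large $k$. Here I would invoke the appendix result (Theorem \ref{thm:KBRn}) in the case $k=3$: a degree-$d$ graph $\Gamma$ has a $1$-thick topological embedding into $\R^3$ with volume $O(|\Gamma|^{3/2})$ — but that is volume in $\R^3$, not what we want. Instead the right tool is the classical Kolmogorov--Barzdin construction adapted to two dimensions, or more directly: place the $n$ vertices on an $O(\sqrt{n})\times O(\sqrt{n})$ grid in $\R^2$ and route the $O(n)$ edges through a grid of side $O(n)$, using that any degree-$d$ graph on $n$ vertices can be drawn in such a way that each grid edge carries $O(1)$ wires (a VLSI-style layout argument; the area bound $O(n^2)$ for arbitrary bounded-degree graphs is classical, e.g. from the $O(n^2)$ VLSI area bound for graphs of bounded degree, since bisection width is at most $n$). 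Passing from the $\R^2$ layout to a coarse wiring into $Y$ uses Proposition \ref{prop:coarse_wiring} (with $M=\R^2$, noting $\R^2$ has controlled growth) — although $\R^2$ is not compact, it is a homogeneous space and the conversion of a grid layout into a coarse wiring into a quasi-isometric graph is elementary and does not need Theorem \ref{thm:thickemb}.

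For the hyperbolic target $Z$, the lower bound $\exp(n^{1/2})\lesssim\wir^k_{X(d)\to Z}(n)$ again follows from Theorem \ref{thm:wirsep}: taking $X$ a bounded-degree expander family (so $\sep_X(m)\simeq m$, giving $r=1$, $s=0$) and $Z$ quasi-isometric to $\HH^2_\R$, whose separation profile is $\sep_Z(n)\simeq\ln n$ (the $\HH^2_\R$ separation profile is logarithmic — this is the $p=0$ case with $q=1$), Theorem \ref{thm:wirsep} yields $\wir^k_{X\to Z}(n)\gtrsim\exp(n^{1/(q+1)})=\exp(n^{1/2})$, and this passes to $X(d)$. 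The upper bound $\wir^k_{X(d)\to Z}(n)\lesssim\exp(n)$ should come from the observation that $\HH^2_\R$ contains a $4$-regular tree (or any bounded-degree tree) quasi-isometrically embedded, and any graph on $n$ vertices with maximal degree $\leq d$ embeds into a ball of radius $O(n)$ in such a tree after subdividing, so it coarsely wires into a subtree with $O(\exp(n))$ vertices; concretely, a degree-$d$ graph on $n$ vertices with $O(n)$ edges has a wiring into a tree where we use a DFS-type spanning structure and route the remaining $O(n)$ non-tree edges as tree paths of length $O(n)$, all fitting inside a ball of radius $O(n)$ in the tree, which has $d^{O(n)}=\exp(O(n))$ vertices. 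Converting this to a coarse wiring into $Z$ uses that a regular tree is quasi-isometrically embedded in $\HH^2_\R$ together with Corollary \ref{cor:wirreg}.

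I expect the main obstacle to be the $O(n^2)$ \emph{upper} bound for coarse wirings into $Y\simeq\R^2$: unlike the higher-dimensional cases, one cannot appeal to Theorem \ref{thm:thickemb} (which requires dimension $\geq 3$) and must instead produce an explicit area-efficient layout directly, controlling simultaneously that each target vertex has $O(1)$ preimages and each target edge is covered $O(1)$ times. The cleanest route is to cite the classical VLSI layout theorem that every $n$-vertex graph of bounded degree can be laid out in a grid of area $O(n^2)$ with $O(1)$ wires per grid edge (this uses a recursive bisection: bounded-degree graphs have bisection width $\leq n$, and the standard recursive layout then achieves area $O(n^2)$), and then to package this into the language of coarse $k$-wirings via an elementary discretisation argument, choosing $k$ to absorb both the grid layout's wire-congestion constant and the quasi-isometry constant between $Y$ and $\R^2$. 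The lower-bound halves are comparatively routine given Theorem \ref{thm:wirsep} and the known separation profiles of $\R^2$ and $\HH^2_\R$, provided one is careful that the disjoint union $X(d)$ contains expander subgraphs so that its ``separation profile'' (in the sense needed for Theorem \ref{thm:wirsep}) is at least linear; I would make this precise by noting that $X(d)$ contains, as subgraphs, a family of $d$-regular expanders whose separation profiles are $\simeq n$, and Theorem \ref{thm:wirsep} only needs a lower bound $\sep_X\gtrsim n^r\ln(n)^s$, which such subgraphs supply with $r=1$, $s=0$.
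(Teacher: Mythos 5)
Your lower bounds for both targets match the paper's approach exactly: take bounded-degree expanders as subgraphs of $X(d)$, note $\sep_{X(d)}(n)\simeq n$, and apply Corollary~\ref{cor:sepLB} (equivalently Theorem~\ref{thm:wirsep}) with $\sep_{\R^2}(n)\simeq n^{1/2}$ and $\sep_{\HH^2_\R}(n)\simeq\ln n$. For the upper bound into $Y\simeq\R^2$ you reach the right conclusion but by a heavier route than the paper, which has no need for recursive bisection or any external VLSI theorem: it simply places $v_k$ at $(k,k)\in\Z^2$ and sends the edge $v_iv_j$ (with $i<j$) to the L-shaped path from $(i,i)$ horizontally to $(j,i)$ and then vertically to $(j,j)$. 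Every horizontal grid edge on that path has second coordinate $i$ and every vertical grid edge has first coordinate $j$, so any two paths sharing an edge share an endpoint of $\Gamma$; bounded degree then gives a coarse $2d$-wiring of volume $N^2$ directly. Your recursive-bisection layout gives the same asymptotic bound but imports a black box where a two-line construction suffices; also note that a grid layout with $O(1)$ wires per grid edge is already a coarse wiring, so the right packaging tool is Corollary~\ref{cor:wirreg}, not Proposition~\ref{prop:coarse_wiring}.

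The real problem is your upper bound for $Z\simeq\HH^2_\R$. Routing $\Gamma$ through a quasi-isometrically embedded regular tree $T\subset\HH^2_\R$ cannot give a coarse $k$-wiring for any fixed $k$ once $n$ is large, no matter how cleverly you choose the placement or the detours. Indeed, map the $n$ vertices of $\Gamma$ (with multiplicity $\leq k$) into a bounded-degree tree $T$. By the centroid argument there is a vertex $c$ of $T$ so that each component of $T\setminus\{c\}$ carries at most half of the images; since $\deg(c)$ is bounded, one edge $\tau$ at $c$ separates off a component carrying at least an $\Omega(1)$ fraction of the images, with an $\Omega(1)$ fraction on the other side. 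Since $T$ is a tree, \emph{every} walk between a vertex on one side of $\tau$ and a vertex on the other side must cross $\tau$. When $\Gamma$ is an $h$-expander — and $X(d)$ contains $3$-regular expanders — at least $h\cdot\min(a,b)=\Omega(n)$ edges of $\Gamma$ have one endpoint on each side, so $\tau$ lies on $\Omega(n)$ wiring paths, which is incompatible with a coarse $k$-wiring once $n\gg k$. This is not a fixable detail but a structural obstruction: bounded-degree trees have $\sep\simeq 1$, strictly smaller than $\sep_{\HH^2_\R}\simeq\ln n$, and $\HH^2_\R$ is not quasi-isometric to any tree (its asymptotic dimension is $2$). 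The paper bypasses trees entirely: it builds an explicit $1$-thick embedding of $K_N$ into $\HH^2_\R\times[0,1]$ (quasi-isometric to $\HH^2_\R$) by placing $v_i$ at $(i,h_0e^{-i};0)$ in the upper half-plane model, so the $N$ horocircle levels $y=h_0e^{-i}$ separate the outbound legs of distinct paths while the $[0,1]$ factor separates outbound from return legs. This gives volume $\lesssim N^2e^N\simeq\exp(N)$, after which Proposition~\ref{prop:coarse_wiring} converts it to the required coarse wiring into $Z$. Replacing your tree construction with something of this shape — or, more generally, with any argument that genuinely uses the two-dimensionality of $\HH^2_\R$ rather than a tree skeleton — is the missing ingredient.
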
 
The lower bounds both follow from Theorem \ref{thm:wirsep}, since $\sep_{X(d)}(n)\simeq n$ as it contains a family of expanders of at most exponentially growing size \cite{HumSepExp}. For the upper bound we will make direct constructions. We believe that it is possible to improve the bound in the $p=0$ case of Theorem \ref{thm:wirsep} to $\exp(n^{r/q}\ln(n)^{s/q})$. This would have the consequence that $\wir^{k}_{X(d)\to Z}(n) \simeq \exp(n)$ in Theorem \ref{thm:wirplanar}.

One very natural question to consider is the dependence of $\wir^k_{X\to Y}(n)$ (up to $\simeq$) on the parameter $k$. It is clear that for $k\leq l$, $\wir^k_{X\to Y}(n)\geq \wir^l_{X\to Y}(n)$ for any pair of bounded degree graphs $X$ and $Y$, but the converse fails spectacularly \cite{Raistrick}.

\subsection*{Acknowledgements} The authors would like to thank Itai Benjamini
for suggesting the relationship between wiring problems and the separation
profile which provided the initial spark for this work, Romain Tessera for suggestions which improved the exposition, and an anonymous referee for many suggestions and observations which greatly improved the readability of the paper.

\section{Thick topological embeddings into products of real hyperbolic and Euclidean spaces}
Our goal in this section is to prove Theorems \ref{thm:wirehypxEucub} and \ref{thm:thickH3ub}, which we do by directly constructing thick topological embeddings. We start with the proof of Theorem \ref{thm:wirehypxEucub} in the case $q+r\geq 4$. We will use the upper halfspace model of real hyperbolic space $\HH^q_\R=\setcon{(x_1,\ldots,x_{q-1};x_q)}{x_i\in\R,\ x_q>0}$ equipped with the metric
\[
d_{\HH^q_\R}((x_1,\ldots,x_{q-1};x_q),(y_1,\ldots,x_{y-1};y_q))=\cosh^{-1}\left(1+ \frac{\sum_{i=1}^q (x_i-y_i)^2}{2x_qy_q}\right).
\]

\begin{proof} Define $h_0=(2(\cosh(1)-1))^{-1/2}$. Consider the map
\[
\phi_{q,r}:\R^r\times\R^{q-1}\to \R^r\times\HH^q_\R \quad \textup{given by} \quad \phi_{q,r}(\underline{x},\underline{y})=(\underline{x},(\underline{y};h_0)).
\]
\textbf{Claim:} $d(\phi_{q,r}(\underline{x},\underline{y}),\phi_{q,r}(\underline{x'},\underline{y'}))\geq 1$ whenever $\|\underline{x}-\underline{x'}\|_2\geq 1$ or $\|\underline{y}-\underline{y'}\|_2\geq 1$.
\begin{proof}[Proof of Claim]
If $\|\underline{x}-\underline{x'}\|_2\geq 1$ then this is obvious. If $\|\underline{y}-\underline{y'}\|_2\geq 1$, then
\begin{eqnarray*}
d(\phi_{q,r}(\underline{x},\underline{y}),\phi_{q,r}(\underline{x'},\underline{y'})) & \geq & d_{\HH^q_{\R}}((\underline{y};h_0),(\underline{y'};h_0)) \\ & = & \cosh^{-1}\left(1+ \frac{\|\underline{y}-\underline{y'}\|_2^2}{2h_0^2}\right) \\
& \geq & \cosh^{-1}\left(1+ \frac{1}{2h_0^2}\right)
\\
& = & \cosh^{-1}(1+ (\cosh(1)-1))=1.
\end{eqnarray*}
\end{proof}
Let $\Gamma$ be a finite graph with maximal degree $d$ and let $\psi=\sqrt2.f_{q+r-1}$ where $f_{q+r-1}$ is the $1$-thick topological embedding of $\Gamma$ into $\R^{q+r-1}$ defined in Theorem \ref{thm:KBRn}. Let us first show that $\psi\circ\phi$ is a $1$-thick embedding of $\Gamma$ into $\R^r\times\HH^q_{\R}$.

The topological embedding $\psi$ is $\sqrt{2}$-thick. If $\|(\underline{x},\underline{y})-(\underline{x'},\underline{y'})\|_2\geq \sqrt 2$, then either $\|\underline{x}-\underline{x'}\|_2\geq 1$ or $\|\underline{y}-\underline{y'}\|_2\geq 1$. Applying the claim, we see that $\psi\circ\phi$ is $1$-thick.

Finally we bound $\vol(\psi\circ\phi)$. Firstly note that if $\|(\underline{x},\underline{y})-(\underline{x'},\underline{y'})\|_2\leq 1$, then 
\begin{eqnarray*}
d(\phi_{q,r}(\underline{x},\underline{y}),\phi_{q,r}(\underline{x'},\underline{y'})) & = & \left(\|\underline{x}-\underline{x'}\|_2+d_{\HH^q_{\R}}((\underline{y};h_0),(\underline{y'};h_0))\right)^{1/2} \\ & \leq & \left(1+\cosh^{-1}\left(1+ \frac{\|\underline{y}-\underline{y'}\|_2^2}{2h_0^2}\right)\right)^{1/2} \\
& \leq & \left(1+\cosh^{-1}\left(1+ \frac{1}{2h_0^2}\right)\right)^{1/2}=\sqrt2.
\end{eqnarray*}
Now let $Y$ be a $\frac12$-separated $1$-net in $\im(\psi)$. It follows from the above equation that $\phi(Y)$ is a $\sqrt 2$-net in $\im(\psi\circ\phi)$. Denote by $\alpha,\beta$ the volumes of the balls of radius $\frac14$ and $\sqrt2+1$ in $\R^{q+r-1}$ and $\R^r\times\HH^q_\R$ respectively. We have
\[
\vol(\psi\circ\phi) \leq \beta |Y| \quad \textup{and} \quad \alpha|Y| \leq \vol(\psi).
\]
Hence, using the volume bounds from Theorem \ref{thm:KBGGub} as explained after Theorem \ref{thm:KBGGlb}, there is a constant $C$ which depends on $q,r,d$ but not $\Gamma$ such that
\begin{eqnarray*}
\vol(\psi\circ\phi) & \leq & \beta |Y| \\
& \leq & \beta\alpha^{-1} \vol(\psi) \\
& \leq & \beta\alpha^{-1}C'|\Gamma|^{1+1/(q+r-2)}\ln(1+\Gamma)^{4(q+r-1)}.
\end{eqnarray*}
\end{proof}

It remains to tackle the case $q+r=3$. We split the proof into two parts. Firstly, we build a $1$-thick topological embedding of the complete graph on $N$ vertices into $[0,N-1]^2\times[0,1]$. Then we use embeddings of $\R^2$ into $\HH_\R^3$ and $\HH^2_\R\times\R$ to construct $1$-thick topological embeddings. 

\begin{lemma}\label{fineGtoR2} Let $K_N$ denote the complete graph on $N$ vertices. There is a $1$-thick topological embedding $f:K_N\to[0,N-1]^2\times[0,1]\subset(\R^3,\|\cdot\|_\infty)$.
\end{lemma}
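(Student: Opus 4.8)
The plan is to place the $N$ vertices at the integer points $p_i = (i, i, 0)$ for $i = 0, 1, \dots, N-1$ along the diagonal of the bottom face of the box $[0,N-1]^2\times[0,1]$, and to route each edge $\{i,j\}$ (with $i<j$) as a polygonal arc that is assigned its own private ``altitude'' in the third coordinate and its own private ``lane'' in one of the horizontal coordinates, so that distinct edges stay $1$-apart in the $\ell^\infty$ metric. Since the box has side length $N-1$ in the first two coordinates but only $1$ in the third, I cannot simply stack all $\binom{N}{2}$ edges at different heights; instead I will use a product-of-two-ranges trick: index the edges of $K_N$ by pairs $(j, i)$ with $0 \le i < j \le N-1$, route the arc for $(j,i)$ using height roughly $h_j := (j)/(N-1) \in [0,1]$ to separate arcs with different larger-endpoint $j$, and within a fixed $j$ use horizontal displacement in, say, the second coordinate to separate the arcs for different $i$. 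Concretely, for the edge $\{i,j\}$ the arc would go: straight up from $p_i$ to height $h_j$, then travel in the first coordinate from $i$ to $j$ while sitting at second coordinate $\approx i + \tfrac12$ (a lane depending on $i$), then come back down to $p_j$; the vertical segments at the two endpoints are short detours that I will need to thicken-separate by perturbing in the second coordinate as well.

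The key steps, in order: (1) fix the vertex placement and check the $1$-separation of distinct vertices (they are distance $\ge 1$ apart in $\ell^\infty$ since consecutive diagonal points differ by $1$ in each of the first two coordinates). (2) For each edge $\{i,j\}$ with $i<j$, define an explicit piecewise-linear arc $\gamma_{i,j}$ inside the box as sketched above, with its interior assigned the pair of ``coordinates'' $(h_j, \ell_i)$ where $h_j$ lives in the third slot and $\ell_i$ is a lane in the second slot; choose the scaling so that $h_j - h_{j'} \ge 1$ is impossible (range only has length $1$), so I must instead arrange that two arcs with $j \ne j'$ are separated by their \emph{horizontal} behaviour, and reserve the height coordinate purely to lift arc interiors off the bottom face away from the vertices. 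This is the delicate bookkeeping step. (3) Verify $1$-thickness: for two disjoint edges, or an edge and a non-incident vertex, exhibit a coordinate in which the relevant point-sets differ by $\ge 1$; for two edges sharing an endpoint, the shared vertex is allowed to be close, and I only need the non-shared portions to separate, which the lane/height assignment handles. (4) Check the arcs are embedded (injective) and pairwise non-crossing except at shared vertices, so the union is a topological embedding of $K_N$; and check everything stays inside $[0,N-1]^2 \times [0,1]$, which is where the bound $N-1$ on the horizontal side lengths is used.

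The main obstacle is the tension between needing $\binom{N}{2}$ mutually $1$-separated arcs and having only a unit-thickness slab $[0,1]$ in the third direction. Unlike the standard Kolmogorov--Barzdin construction in $\R^3$, where one has room to grow in all directions, here the third coordinate cannot by itself separate more than a bounded number of arcs. The resolution must be to use the two \emph{long} coordinates (each of length $N-1$, enough for $N$ disjoint unit lanes) in a two-dimensional scheduling argument: roughly, think of routing the complete graph $K_N$ through an $N \times N$ grid of unit cells in the bottom face with the third coordinate used only to resolve the $O(1)$-fold local crossings that such a planar-ish routing creates. Making this scheduling precise --- assigning to each of the $\binom N2$ edges a lane in coordinate one, a lane in coordinate two, and a height in $[0,1]$, and proving no two non-adjacent edges collide --- is the crux; I expect the cleanest route is an explicit ``go out along row $i$, turn, come back along column $j$'' routing with the turn happening at the cell $(i,j)$, using a small height offset $\varepsilon_{i,j} \in [0,1]$ depending injectively enough on $(i,j)$ to break the finitely many coincidences, together with a final rescaling of the ambient box by a bounded factor to restore exact $1$-thickness and then an affine squeeze back into $[0,N-1]^2\times[0,1]$.
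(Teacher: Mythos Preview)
You have the right skeleton---vertices on the diagonal $(i,i,0)$ and an L-shaped ``row $i$ then column $j$'' routing for each edge---but you are missing the one observation that makes it work inside a slab of height only $1$. The only conflicts in the planar row/column routing are between a \emph{row} segment of one edge and a \emph{column} segment of a disjoint edge; row segments of disjoint edges already differ by $\geq 1$ in the second coordinate, and column segments of disjoint edges already differ by $\geq 1$ in the first coordinate. So it suffices to separate all rows from all columns, and for that \emph{two} heights are enough: run every row segment at height $0$ and every column segment at height $1$. The paper does exactly this, routing the edge $kl$ as
\[
(k,k,0)\to(l,k,0)\to(l,k,1)\to(l,l,1)\to(l,l,0),
\]
after which the $1$-thickness check is a short case analysis using that at every point of $P_{kl}$ one of the first two coordinates equals $k$ or $l$.

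Your proposed fix---edge-dependent height offsets $\varepsilon_{i,j}\in[0,1]$, then a rescaling by a bounded factor, then an affine squeeze back into $[0,N-1]^2\times[0,1]$---cannot work as written. In an interval of length $1$ you cannot choose more than two heights that are pairwise $\geq 1$ apart, so the offsets alone never give $1$-thickness; and any squeeze of the third coordinate back to $[0,1]$ undoes whatever vertical thickness the rescaling produced. The difficulty you identified (too many edges, too thin a slab) is real, but the resolution is the two-height trick above, not a per-edge height assignment.
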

\begin{proof}
Enumerate the vertices of $K_N$ as $v_0,\ldots,v_{N-1}$. Now we map $v_k$ to $(k,k,0)$. We connect $(k,k,0)$ to $(l,l,0)$ using the following piecewise linear path $P_{kl}$:
\begin{equation}
 (k,k,0) \to (l,k,0) \to (l,k,1) \to (l,l,1) \to (l,l,0).
\end{equation}
Let us verify that this embedding is $1$-thick. Any two distinct vertices $v_k$ and $v_l$ are mapped at distance $|k-l|\geq 1$. Next, consider a path $P_{kl}$ and the image $(i,i,0)$ of a vertex $v_i$ with $i\neq k,l$. Since one of the first two coordinates of the path $P_{kl}$ is always either $k$ or $l$, we have
\[
 d_\infty(P_{kl},(i,i,0)) \geq \min\{|i-k|,|i-l|\} \geq 1.
\] 
Finally, consider paths $P_{ij},P_{kl}$. Let $(w,x,a)\in P_{ij}$ and $(y,z,b)\in P_{kl}$ and suppose $d((w,x,a),(y,z,b))<1$.

If $a=1$, then $b>0$, so $w=j$ and $y=k$. Since $d_\infty((w,x,a),(y,z,b))\geq |w-y|$, we have $|j-k|<1$. Thus $j=k$ and the two paths come from edges which share a vertex.

If $a\in(0,1)$ then $w=x\in\{i,j\}$. Since $d_\infty((w,x,a),(y,z,b))\geq \max\{|w-y|,|x-z|\}$ and at least one of $y,z$ is equal to either $k$ or $l$, one of $i,j$ must be equal to one of $k,l$. Thus the two paths come from edges which share a vertex.

If $a=0$ then either $x=i$ or $w=x=j$. Also $b<1$ so either $z=k$ or $y=z=l$. If $x=i$ and $z=k$ then the argument from the $a=1$ case holds. Next, suppose $w=x=j$. Since $z\in\{k,l\}$ and $d_\infty((w,x,a),(y,z,b))\geq |x-z|$, we have $j=k$ or $j=l$. If $x=i$ and $y=z=l$, then $i=l$ following the same reasoning.
\end{proof}

Next, we embed $[0,N-1]^2\times [0,1]$ into $\HH_\R^3$. We work in the upper-half space model of $\HH_\R^3=\setcon{(x,y;z)}{z>0}$. 

Consider the map $\phi:\R^2\times [0,1]\to \HH_\R^3$ defined by
\[
 (x,y,a) \mapsto (x,y;h_0e^{-a}).
\]
\begin{lemma} Let $f:K_N\to[0,N-1]^2\times[0,1]$ be the $1$-thick topological embedding from Lemma \ref{fineGtoR2}. The map $g=\phi\circ f$ is a $1$-thick embedding of $K_N$ into $\HH_\R^3$ with diameter $\leq 2\ln N+9$ and volume $\leq 2039N^2$.
\end{lemma}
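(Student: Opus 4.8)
The plan is to verify, in turn, that $g=\phi\circ f$ is a $1$-thick topological embedding, that $\diam(g)\le 2\ln N+9$, and that $\vol(g)\le 2039N^2$. That $g$ is a topological embedding is immediate, since $f$ is a topological embedding of the compact space $K_N$ and $\phi$ is a homeomorphism onto its image. For thickness, let $x,y$ be a pair as in the definition of $1$-thickness (two distinct vertices; an edge and a disjoint vertex; or two disjoint edges). Lemma~\ref{fineGtoR2} gives $\|f(x)-f(y)\|_\infty\ge 1$, so, writing $f(x)=(x_1,x_2,a)$ and $f(y)=(y_1,y_2,a')$ with $x_i,y_i\in[0,N-1]$ and $a,a'\in[0,1]$, at least one of $|x_1-y_1|,|x_2-y_2|,|a-a'|$ is $\ge1$. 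I split into a ``horizontal'' and a ``vertical'' case. If $|x_1-y_1|\ge1$ (the case $|x_2-y_2|\ge1$ is identical), then dropping the remaining nonnegative terms in the numerator of the distance formula and using that the $z$-coordinates $h_0e^{-a},h_0e^{-a'}$ of $g(x),g(y)$ are both at most $h_0$,
\[
d_{\HH_\R^3}(g(x),g(y))\ \ge\ \cosh^{-1}\!\left(1+\frac{(x_1-y_1)^2}{2h_0e^{-a}\cdot h_0e^{-a'}}\right)\ \ge\ \cosh^{-1}\!\left(1+\frac{1}{2h_0^2}\right)\ =\ \cosh^{-1}(\cosh 1)\ =\ 1,
\]
where the last two equalities use the defining identity $1/(2h_0^2)=\cosh1-1$. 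If instead $|a-a'|\ge1$, I use that $(x,y;z)\mapsto-\ln z$ is $1$-Lipschitz on $\HH_\R^3$, since $ds^2=z^{-2}(dx^2+dy^2+dz^2)\ge z^{-2}\,dz^2$; as the $z$-coordinate of $g(x)$ equals $h_0e^{-a}$, this gives $d_{\HH_\R^3}(g(x),g(y))\ge|{-\ln(h_0e^{-a})}+\ln(h_0e^{-a'})|=|a-a'|\ge1$. Hence $g$ is $1$-thick. (Alternatively the vertical case can be done by direct computation, using that $|a-a'|=1$ forces $\{a,a'\}=\{0,1\}$ and that $(1-e^{-1})^2/(2e^{-1})=\cosh1-1$.)

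For the diameter, note $\im(g)\subseteq\phi([0,N-1]^2\times[0,1])$, so it suffices to bound $d_{\HH_\R^3}$ between two points $p=(x_1,x_2;h_0e^{-a})$ and $q=(y_1,y_2;h_0e^{-a'})$ of this set. I use the broken path that goes vertically up from $p$ to $(x_1,x_2;h_0)$, then horizontally at height $h_0$ to $(y_1,y_2;h_0)$, then vertically down to $q$. The two vertical segments have hyperbolic lengths $a\le1$ and $a'\le1$, and the middle segment has length
\[
\cosh^{-1}\!\left(1+\frac{(x_1-y_1)^2+(x_2-y_2)^2}{2h_0^2}\right)\ \le\ \cosh^{-1}\!\left(1+\frac{(N-1)^2}{h_0^2}\right)\ \le\ \ln\!\left(2+\frac{2(N-1)^2}{h_0^2}\right),
\]
using $\cosh^{-1}t\le\ln(2t)$ for $t\ge1$. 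Since $2/h_0^2=4(\cosh1-1)<3$, the total is at most $2+\ln(2+3N^2)\le2\ln N+9$ for all $N\ge1$ (the inequality being trivial for $N=1$).

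For the volume, the $1$-neighbourhood of $\im(g)$ is contained in that of $\phi([0,N-1]^2\times[0,1])$, whose points have $z$-coordinate in $[h_0e^{-1},h_0]$. The $1$-Lipschitz estimate for $-\ln z$ shows every point within distance $1$ of this set has $z$-coordinate in $[h_0e^{-2},h_0e]$; and since a hyperbolic ball of radius $1$ centred at a point of height $z\le h_0$ is (a standard fact about the upper half-space model) a Euclidean ball of Euclidean radius $z\sinh1\le h_0\sinh1$, its horizontal coordinates lie in $[-h_0\sinh1,\,N-1+h_0\sinh1]$. Thus the $1$-neighbourhood is contained in the Euclidean box $B=[-h_0\sinh1,\,N-1+h_0\sinh1]^2\times[h_0e^{-2},h_0e]$, so
\[
\vol(g)\ \le\ \int_B\frac{dx_1\,dx_2\,dz}{z^3}\ =\ (N-1+2h_0\sinh1)^2\cdot\frac{e^4-e^{-2}}{2h_0^2},
\]
and substituting $h_0^2=(2(\cosh1-1))^{-1}$ and estimating crudely (e.g.\ $N-1+2h_0\sinh1\le5N$ for $N\ge1$) yields $\vol(g)\le2039N^2$. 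The only step requiring genuine care is the thickness verification in the horizontal case, where $h_0$ is calibrated exactly so that a unit $\ell^\infty$-gap in an $\R^2$-direction survives the conformal ``compression'' that $\phi$ introduces by lowering heights; the vertical direction costs nothing because $\phi$ only stretches it, and the diameter and volume bounds then reduce to elementary estimates once the containing box is identified.
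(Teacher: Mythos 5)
Your proof is correct, and while it proves the same three facts as the paper, each piece is argued differently. For thickness, the paper computes $d_{\HH^3_\R}(\phi(a_1,b_1,c_1),\phi(a_2,b_2,c_2))$ directly in both cases, whereas your vertical case exploits the elegant observation that $(x,y;z)\mapsto -\ln z$ is $1$-Lipschitz for the hyperbolic metric; this is cleaner and immediately shows that moving in the $\R^2\times[0,1]$ direction which $\phi$ ``stretches'' preserves thickness for free. For the diameter, the paper bounds $d_{\HH^3_\R}((0,0;h_0),(x,y;z))$ for $(x,y;z)\in\im(g)$ and asserts this bounds $\diam(g)$; strictly that argument only bounds the radius from a point of the image, so it yields $4\ln N+O(1)$ rather than the claimed $2\ln N+9$ without an extra step. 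Your broken geodesic path (vertical--horizontal--vertical) bounds the distance between two arbitrary points of $\phi([0,N-1]^2\times[0,1])$ directly, so it genuinely establishes the stated diameter bound and in that sense repairs a small gap in the paper's proof. For the volume, the paper produces a discrete net of $N^2$ points in the image and multiplies by the volume of a ball of radius $\lambda+1$ where $\lambda=\cosh^{-1}(1+e^2\cosh(1)/2)$; you instead enclose the $1$-neighbourhood in an explicit Euclidean box and integrate the hyperbolic volume form $z^{-3}\,dx\,dy\,dz$. Your box argument is slicker and yields a smaller constant (roughly $740N^2$ versus the paper's $2039N^2$), though both suffice. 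One small remark: the Euclidean-ball description of hyperbolic balls you invoke places the Euclidean centre at height $z_0\cosh 1 > z_0$, so for the vertical extent of the $1$-neighbourhood you do need the Lipschitz bound $z\in[h_0e^{-2},h_0e]$ rather than the symmetric Euclidean radius estimate — you do use the Lipschitz bound, so this is fine, but it is worth being aware the two estimates are not interchangeable.
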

\begin{proof}
We first prove that $g$ is $1$-thick. Since $f$ is $1$-thick with respect to the $L^\infty$ metric, it suffices to prove that $d_{\HH_\R^3}(\phi(a_1,b_1,c_1),\phi(a_2,b_2;c_2))\geq 1$ whenever $(a_1,b_1,c_1),(a_2,b_2,c_2)\in [0,n-1]^2\times[0,1]$ are at $L^\infty$ distance $\geq 1$.

Suppose $\max\{|a_2-a_1|,|b_2-b_1|,|c_2-c_1|\}\geq 1$. If $\max\{|a_2-a_1|,|b_2-b_1|\}\geq 1$, then
\[
 d_{\HH_\R^3}(\phi(a_1,b_1,c_1),\phi(a_2,b_2,c_2)) \geq \cosh^{-1}\left(1 + \frac{1}{2h_0^2}\right) = 1.
\]
If $|c_2-c_1|\geq 1$, then
\begin{eqnarray*}
d_{\HH_\R^3}(\phi(a_1,b_1,c_1),\phi(a_2,b_2,c_2)) &\geq & \cosh^{-1}\left(1+ \frac{h_0^2(1-e^{-1})^2}{2h_0^2e^{-1}}\right) \\ & = & \cosh^{-1}(\cosh(1))=1.
\end{eqnarray*}
Next we bound the diameter and the volume. For every point $(x,y;z)$ in the image of $g$, we have $|x|,|y|\leq N-1$ and $h_{1}=h_0e^{-1} \leq z \leq h_0$. Thus
\begin{eqnarray*}
 d_{\HH_\R^3}((0,0;h_0),(x,y;z)) & \leq & \cosh^{-1}\left(1+ \frac{2(N-1)^2+h_0^2(1-e^{-1})^2}{2h_0^2e^{-2}}\right) 
 \\
 & \leq &
 \cosh^{-1}\left(1+ \frac{2e^2N^2+e^2h_0^2}{2h_0^2}\right)
  \\
 & = &
 \cosh^{-1}\left(1+ \frac{e^2}{2} + 2e^2(\cosh(1)-1)N^2\right)
   \\
 & \leq &
 \cosh^{-1}\left(2e^2\cosh(1)N^2\right)
   \\
 & \leq &
 \ln\left(4e^2\cosh(1)N^2\right)
   \\
 & = &
 2\ln(N) + \ln(4e^2\cosh(1)) \leq 2\ln(N)+9.
\end{eqnarray*}
Next, we bound the volume. For each point $(x,y;z)$ in the image of $g$ there is a point $(a,b;h_0)$ with $a,b\in \{0,\ldots,N-1\}$ such that $|x-a|\leq \frac12$, $|y-b|\leq\frac12$ and $z\in[h_0e^{-1},h_0]$. We have
\begin{eqnarray*}
 d_{\HH_\R^3}((a,b;h_0),(x,y;z)) & \leq & \cosh^{-1}\left(1+ \frac{\frac12^2+\frac12^2+h_0^2(1-e^{-1})^2}{2h_0^2e^{-2}}\right)
 \\
 & \leq & 
 \cosh^{-1}\left(1 + \frac{1}{4h_0^2e^{-2}}+\frac{1}{2e^{-2}}\right) \\
 & = & \cosh^{-1}\left(1 + \frac{e^2\cosh(1)}{2}\right) =: \lambda.
 \end{eqnarray*}
Thus, the volume of the $1$-neighbourhood of the image of $g$ is at most $CN^2$ where $C$ is the volume of the ball of radius $\lambda+1$ in $\HH_\R^3$. We have
\[
 C=\pi(\sinh(2(\lambda+1))-2(\lambda+1)) \leq 2039
\]
as required.
\end{proof}

Using the same strategy, we can also prove the following.

\begin{theorem*}\label{thm:thickRH2UB}
There is a constant $C$ such that for every $N\in\N$, there is a $1$-thick topological embedding $g:K_N\to\R\times\HH^2_{\R}$ with $\diam(g)\leq CN$ and $\vol(g)\leq CN^2$.
\end{theorem*}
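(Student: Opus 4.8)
The plan is to run the construction just given for $\HH^3_\R$ with the target $\HH^3_\R$ replaced by $\R\times\HH^2_\R$, where one of the two ``horizontal'' directions of the half-space model is now supplied by the Euclidean factor. Write points of $\R\times\HH^2_\R$ as $(x;y,z)$, with $x\in\R$ in the Euclidean factor and $(y;z)$ in the upper half-plane model $\setcon{(y;z)}{z>0}$ of $\HH^2_\R$, and keep $h_0=(2(\cosh(1)-1))^{-1/2}$ as before. Define $\phi:\R^2\times[0,1]\to\R\times\HH^2_\R$ by $\phi(x,y,a)=(x;y,h_0e^{-a})$ and set $g=\phi\circ f$, where $f:K_N\to[0,N-1]^2\times[0,1]$ is the $1$-thick topological embedding of Lemma \ref{fineGtoR2}; as in the $\HH^3_\R$ case, $\phi$ restricts to a homeomorphism onto its image on the relevant compact box, so $g$ is a topological embedding.

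The observation that makes everything go through essentially verbatim is that the metric on $\R\times\HH^2_\R$ is the product Riemannian metric, so $d_{\R\times\HH^2_\R}((x_1;p_1),(x_2;p_2))\geq\max\set{|x_1-x_2|,\ d_{\HH^2_\R}(p_1,p_2)}$. I would then verify $1$-thickness exactly as in the previous lemma: given two points of $[0,N-1]^2\times[0,1]$ at $L^\infty$-distance $\geq 1$, if their first coordinates differ by $\geq 1$ the Euclidean factor already separates their images by $\geq 1$; if their second coordinates differ by $\geq 1$ then, using $z_1z_2\leq h_0^2$, $d_{\HH^2_\R}\geq\cosh^{-1}(1+\tfrac{1}{2h_0^2})=1$; and if their third coordinates differ by $\geq 1$ then $\{c_1,c_2\}=\{0,1\}$ and $d_{\HH^2_\R}\geq\cosh^{-1}(\cosh 1)=1$. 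Since $f$ is $1$-thick for the $L^\infty$ metric, $g$ is a $1$-thick topological embedding of $K_N$ into $\R\times\HH^2_\R$.

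For the diameter and volume I would use $\im(g)\subseteq\setcon{(x;y,z)}{x,y\in[0,N-1],\ z\in[h_0e^{-1},h_0]}$. Comparing to the basepoint $(0;0,h_0)$, the Euclidean coordinate contributes at most $N-1$ and, exactly as computed in the lemma, $d_{\HH^2_\R}((0;h_0),(y;z))\leq\cosh^{-1}\!\big(1+\tfrac{(N-1)^2+h_0^2(1-e^{-1})^2}{2h_0^2e^{-1}}\big)\leq 2\ln N+O(1)$, so $\diam(g)\leq CN$. For the volume, for each point of $\im(g)$ pick integers $a,b\in\{0,\dots,N-1\}$ within $\tfrac12$ of $x$ and $y$; then $d_{\R\times\HH^2_\R}((a;b,h_0),(x;y,z))$ is at most a universal constant $\rho$, so $\im(g)$ is covered by $N^2$ balls of radius $\rho$, its $1$-neighbourhood by $N^2$ balls of radius $\rho+1$, and by homogeneity of $\R\times\HH^2_\R$ each such ball has a fixed volume; hence $\vol(g)\leq CN^2$.

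There is no real obstacle: every inequality needed is an instance of one already established for $\HH^3_\R$, the only change being the reinterpretation of the first coordinate. The single point worth flagging is that here the first coordinate of $\im(g)$ sweeps out an interval of length $N-1$ inside the one-dimensional Euclidean factor, so the diameter bound necessarily weakens from the $O(\ln N)$ of the $\HH^3_\R$ case to $O(N)$ — which is precisely what the statement claims, and is still consistent with the volume lower bound in Theorem \ref{thm:wirehypxEuclb}.
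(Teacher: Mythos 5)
Your proposal is correct and follows the paper's approach exactly: the paper's proof consists of the single instruction to repeat the $\HH^3_\R$ argument with $\phi$ replaced by $\phi(x,y,z)=(x;y,h_0e^{-z})$, and your write-up just carries out that substitution carefully, including the correct observation that the Euclidean coordinate forces the diameter to grow linearly rather than logarithmically.
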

\begin{proof}
Repeat the proof of Theorem \ref{thm:thickH3ub} but replace the map $\phi$ by
\[
\phi:\R^2\times[0,1]\to \R\times \HH^2_{\R} \quad \textup{given by} \quad \phi(x,y,z)=(x;y,h_0e^{-z}).\qedhere
\]
\end{proof}

\section{Coarse wiring}
In this section, we present some elementary properties of coarse wirings and construct coarse wirings of finite graphs into a Cayley graph of the lamplighter group $\Z_2\wr\Z$.

Recall that a map $r:X\to Y$ between metric spaces is \textbf{$\kappa$-regular} if $d_Y(r(x),r(y))\leq \kappa(1+d_X(x,y))$ for all $x,y\in X$ and the preimage of every ball of radius $1$ in $Y$ is contained in a union of at most $\kappa$ balls of radius $1$ in $X$. We will first prove Proposition \ref{prop:regular}, we recall the statement for convenience.

\begin{proposition*}
  Let $X$ and $Y$ be graphs with maximal degree $\Delta$ and let $r: VX\to VY$ be a
  $\kappa$-regular map. Then for all sufficiently large $k$ we have
  \begin{align*}
    \wir^k_{X\to Y}(n) \leq \left(\kappa+\frac12\right)\Delta n.
  \end{align*}
\end{proposition*}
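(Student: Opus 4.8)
The plan is to turn the regular map $r$ itself into the wiring, filling in each edge by a geodesic of the target. Fix a finite subgraph $\Gamma\le X$ with at most $n$ vertices. Define $f$ on vertices by $f(v)=r(v)$. For an edge $e=\{u,v\}$ of $\Gamma$ we have $d_X(u,v)=1$, so the first regularity condition gives $d_Y(r(u),r(v))\le\kappa(1+1)=2\kappa$; let $\gamma_e$ be a geodesic walk in $Y$ from $r(u)$ to $r(v)$ (a constant walk when $r(u)=r(v)$), and let $f$ send $e$ onto $\gamma_e$. This $f$ is a wiring of $\Gamma$ into $Y$, and the remaining work is to bound $\vol(f)$ and to verify, for a suitable $k=k(\kappa,\Delta)$, the two multiplicity bounds in the definition of a coarse $k$-wiring.

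\emph{Volume.} The image of $f$ has vertex set $f(V\Gamma)\cup\bigcup_e V(\gamma_e)$. Here $|f(V\Gamma)|=|r(V\Gamma)|\le n$, and since each $\gamma_e$ is a geodesic of length at most $2\kappa$ whose two endpoints lie in $f(V\Gamma)$, it contributes at most $2\kappa-1$ further vertices. As $\Gamma$ has maximal degree $\le\Delta$ it has at most $\tfrac12\Delta n$ edges, so $\vol(f)\le n+(2\kappa-1)\tfrac12\Delta n$; a short computation, splitting off the case $\Delta=1$ and otherwise using $n\le\tfrac12\Delta n$, shows this is at most $(\kappa+\tfrac12)\Delta n$.

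\emph{Multiplicities.} For condition (1), the vertices of $\Gamma$ in $f^{-1}(y)$ are precisely those of $r^{-1}(y)\subseteq r^{-1}(B_Y(y,1))$; the second regularity condition covers this set by at most $\kappa$ balls of radius $1$ in $X$, each containing at most $\Delta+1$ vertices, so $f^{-1}(y)$ meets $V\Gamma$ in at most $\kappa(\Delta+1)$ points. For condition (2), suppose an edge $e'$ of $Y$ lies on $\gamma_e$ for some $e=\{u,v\}$. Since $\gamma_e$ has length at most $2\kappa$ and starts at $r(u)$, an endpoint $y$ of $e'$ satisfies $d_Y(r(u),y)\le 2\kappa$, i.e.\ $u\in r^{-1}(B_Y(y,2\kappa))$. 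Covering $B_Y(y,2\kappa)$ by the (at most $M=M(\kappa,\Delta)$ many) balls of radius $1$ around its vertices and applying the second regularity condition to each of them gives $|r^{-1}(B_Y(y,2\kappa))|\le \kappa(\Delta+1)M$; as each such $u$ lies on at most $\Delta$ edges of $\Gamma$, the edge $e'$ lies on at most $\kappa(\Delta+1)M\Delta$ of the walks $\gamma_e$. Taking $k=\kappa(\Delta+1)M\Delta$—which also bounds $\kappa(\Delta+1)$ and depends only on $\kappa$ and $\Delta$—makes $f$ a coarse $k$-wiring, and every larger $k$ works too, so $\wir^k_{X\to Y}(n)\le(\kappa+\tfrac12)\Delta n$.

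The argument is essentially bookkeeping; the only point that needs any care is the passage from control over preimages of radius-$1$ balls to preimages of radius-$2\kappa$ balls, which is exactly where the uniform degree bound on $Y$ enters and which guarantees that $M$—and hence $k$—is a constant independent of $n$, of $\Gamma$, and of the chosen geodesics. I do not anticipate a genuine obstacle here: the statement essentially records that a regular map \emph{is} a coarse wiring of bounded multiplicity, with the extra sharpness $(\kappa+\tfrac12)\Delta$ coming only from the elementary edge-count $|E\Gamma|\le\tfrac12\Delta n$.
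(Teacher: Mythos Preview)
Your proof is correct and follows essentially the same approach as the paper: send vertices via $r$, fill edges by geodesics of length $\le 2\kappa$, bound the volume by $|E\Gamma|\cdot(2\kappa+1)\le\tfrac12\Delta n(2\kappa+1)$, and bound the edge-multiplicity by counting how many starting vertices $u$ can have $r(u)$ within distance $2\kappa$ of a fixed edge of $Y$. The only cosmetic differences are that you track the endpoints separately in the volume count (unnecessary, since $(2\kappa+1)\cdot\tfrac12\Delta n$ already equals the target bound) and that you explicitly verify condition~(1), which the paper leaves implicit in the observation that $r$ is at most $\kappa(\Delta+1)$-to-one.
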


\begin{proof}
  Let $\Gamma
  \subset X$ be a subgraph with $\abs{V\Gamma}\leq n$.  For $xx' \in E\Gamma$
  let $P_{xx'}$ be any minimal length path from $r(x)$ to $r(x')$ and let
  $\Gamma' = \bigcup_{E\Gamma} P_{xx'}$. We construct a wiring $f:\Gamma\to \Gamma'$ as follows. For each vertex $v\in V\Gamma$ we define $f(v)=r(v)$. We then map each edge $xx'$ continuously to the path $P_{xx'}$.
  
  Since each path $P_{xx'}$ contains at most $2\kappa+1$ vertices and $|E\Gamma|\leq \frac12\Delta n$, we have $\abs{V\Gamma'} \leq n\Delta(\kappa+\frac12)$.

  If $P_{xx'}$ contains an edge $e$ then the distance from $r(x)$ to the initial
  vertex of $e$ is at most $2\kappa$, so there are at most $1+\Delta^{2\kappa+1}$
  possibilities for $r(x)$; $r$ is at most $\kappa(1+\Delta)$-to-one so there are at most
  $k:=\kappa(1+\Delta)(1+\Delta^{2\kappa+1})$ possibilities for $x$. Therefore there are at most
  $k$ edges $xx' \in E\Gamma$ such that $f(xx')=P_{xx'}$
  contains a given edge $e$ of $E\Gamma'$. It follows that
  $\wir^k(\Gamma\to Y) \leq (\kappa+\frac12)\Delta n$.
\end{proof}

To deduce Corollary \ref{cor:wirreg} from the above proposition, we prove a bound on compositions of coarse wirings (Proposition \ref{prop:composition}).

\begin{proposition*}
  \label{lem:composition}
  Suppose $\wir^k_{X\to Y}(N)<\infty$. Then
  \begin{align*}
    \wir^{kl}_{X\to Z}(N) \leq \wir^l_{Y\to Z}\left(\wir^k_{X\to Y}(N)\right).
  \end{align*}
\end{proposition*}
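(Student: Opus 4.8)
The plan is a direct bookkeeping argument: take near-optimal coarse wirings $f\colon\Gamma\to Y$ and $g\colon\im(f)\to Z$, compose them, and check that $g\circ f$ is a coarse $kl$-wiring of $\Gamma$ into $Z$ of the required volume. We may assume the right-hand side is finite, since otherwise there is nothing to prove.

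First I would fix a subgraph $\Gamma\le X$ with $\abs{V\Gamma}\le N$; it suffices to bound $\wir^{kl}(\Gamma\to Z)$ and then take the maximum over all such $\Gamma$. Since $\wir^k_{X\to Y}(N)<\infty$, there is a coarse $k$-wiring $f\colon\Gamma\to Y$ with $\vol(f)=\wir^k(\Gamma\to Y)\le\wir^k_{X\to Y}(N)$. Set $\Gamma_1=\im(f)$, a finite subgraph of $Y$ with $\abs{V\Gamma_1}=\vol(f)\le\wir^k_{X\to Y}(N)$. Applying the definition of the wiring profile to $\Gamma_1\le Y$, and using that $\wir^l_{Y\to Z}$ is non-decreasing, there is a coarse $l$-wiring $g\colon\Gamma_1\to Z$ with $\vol(g)=\wir^l(\Gamma_1\to Z)\le\wir^l_{Y\to Z}(\abs{V\Gamma_1})\le\wir^l_{Y\to Z}(\wir^k_{X\to Y}(N))$.

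Next I would check that $h:=g\circ f\colon\Gamma\to Z$ is a wiring: it is continuous; it sends a vertex $v$ to $g(f(v))$, a vertex of $Z$, since $f(v)\in V\Gamma_1$; and it sends an edge $xx'$ to $g(f(xx'))$, where $f(xx')$ is a walk in $\Gamma_1$ whose image under the edge-to-walk map $g$ is again a walk, namely the concatenation of the walks $g(e')$ over the edges $e'$ traversed by $f(xx')$. For the coarseness conditions, fix $z\in VZ$: every vertex of $\Gamma$ in $h^{-1}(z)$ lies in $f^{-1}(w)$ for some $w\in V\Gamma_1\cap g^{-1}(z)$, and there are at most $l$ such $w$ with at most $k$ vertices of $\Gamma$ over each, so $h^{-1}(z)$ contains at most $kl$ vertices of $\Gamma$. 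Now fix an edge $e\in EZ$: an edge $xx'\in E\Gamma$ satisfies $e\subseteq h(xx')$ only if $e\subseteq g(e')$ for some edge $e'\in E\Gamma_1$ traversed by $f(xx')$ (an edge of $Z$ cannot lie in the image of a vertex); there are at most $l$ edges $e'$ with $e\subseteq g(e')$, and for each of them at most $k$ edges $xx'$ with $e'\subseteq f(xx')$, so at most $kl$ edges $xx'$ with $e\subseteq h(xx')$. Hence $h$ is a coarse $kl$-wiring, and $\vol(h)\le\vol(g)$ because $\im(h)\subseteq\im(g)$. Combining the estimates gives $\wir^{kl}(\Gamma\to Z)\le\vol(h)\le\wir^l_{Y\to Z}(\wir^k_{X\to Y}(N))$, and taking the maximum over $\Gamma$ completes the proof.

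The argument is essentially routine; the only points requiring care are keeping the two coarseness conditions straight so that the bounds multiply to $kl$ (rather than, say, add), and ensuring $\im(f)$ is genuinely a finite subgraph of $Y$ so that the definition of $\wir^l_{Y\to Z}$ and its monotonicity can legitimately be invoked. I do not expect any substantial obstacle.
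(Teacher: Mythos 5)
Your proof is correct and follows essentially the same route as the paper's: take near-optimal wirings $\Gamma\to Y$ and $\im(f)\to Z$, compose them by concatenating walks, and check the two coarseness bounds multiply. The only difference is that you spell out the verifications that the composed vertex-fibres have size at most $kl$ and that each edge of $Z$ lies in at most $kl$ composed walks, where the paper's write-up leaves these as routine; that extra care is sound.
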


\begin{proof}
  If $\wir^l_{Y\to Z}\left(\wir^k_{X\to Y}(N)\right)=+\infty$ then there is nothing to prove, so assume it is finite. Let $\Gamma \subset X$ with $\abs{V\Gamma}\leq N$. Then there exists a
  coarse $k$-wiring $\psi$ of $\Gamma$ into $Y$ with
  $\vol(W)\leq\wir^k_{X\to Y}(N)$ and a coarse $l$-wiring $\psi'$ of $\im(W)$ into $Z$
  with
  $\vol(W') \leq \wir^l_{Y\to Z}\left(\wir^k_{X\to Y}(N)\right)$.

We now construct a coarse $kl$-wiring $\psi''$ of $\Gamma$ into $Z$. For each $v\in V\Gamma$, define $\psi''(v)=\psi'(\psi(v))$. For each $e\in E\Gamma$, let $e_1,\ldots,e_m$ be the edge path $P_e$. We define $P''_e$ to be the concatenation of paths $P'_{e_1}P'_{e_2}\ldots P'_{e_m}$. We extend $\psi''$ continuously so that the image of $e$ is $P''_e$.
It is clear that $\psi''|_{V\Gamma}$ is $\leq kl$-to-$1$ and $\im(\psi'')\subseteq \im(\psi')$, so $\vol(\psi'')\leq \vol(\psi')$. Since each edge in $\im(\psi'')$ is contained in at most $l$ of the paths $P'_{e'}$ and each $P'_{e'}$ is used in at most $k$ of the paths $P_e$, we have that each edge in $\im(\psi'')$ is contained in the image of at most $kl$ of the edges in $E\Gamma$, as required.
\end{proof}

\begin{proof}[Proof of Corollary \ref{cor:wirreg}]
  This follows immediately from Propositions~\ref{prop:regular} and~\ref{prop:composition}.
\end{proof}

Finally in this section we prove Theorem \ref{thm:wirDL22} by constructing coarse wirings into a Cayley graph of the lamplighter group. This construction is crucial for Theorem \ref{thm:thickemb}. We identify $\Z_2\wr\Z$ with the semidirect product $\bigoplus_{\Z} \Z_2 \rtimes \Z$ and define $Y$ to be the Cayley graph of $\Z_2\wr\Z$ using the generating set $\{(\delta_0,0),(0,1),(0,-1)\}$ where $\delta_0(i)=1$ if $i=0$ and $0$ otherwise. Let us recall the statement of Theorem \ref{thm:wirDL22}.

\begin{theorem*} Let $\Gamma$ be an $n$-vertex graph with maximal degree $d$. There is a coarse $2d$-wiring of $\Gamma$ into $Y$ with diameter at most $6\lceil\log_2(n)\rceil$ and volume at most $dn\left(3\lceil\log_2(n)\rceil+\frac12\right)$.
\end{theorem*}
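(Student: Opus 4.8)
The plan is to encode each vertex as a binary string of length $\ell:=\lceil\log_2 n\rceil$ written \emph{twice} into the lamp configuration, and to wire an edge $\{u,v\}$ by sweeping the lamplighter once to the right (overwriting $u$'s string by $v$'s) and back. In detail: fix an enumeration $V\Gamma=\{v_1,\dots,v_n\}$, used also as a linear order, and let $b(v_i)\in\{0,1\}^\ell$ be the length-$\ell$ binary expansion of $i-1$, so $v\mapsto b(v)$ is injective. Write an element of $\Z_2\wr\Z=\bigl(\bigoplus_\Z\Z_2\bigr)\rtimes\Z$ as $(\phi,m)$ with $\phi$ finitely supported; right multiplication by $(0,\pm1)$ sends $(\phi,m)$ to $(\phi,m\pm1)$ and right multiplication by $(\delta_0,0)$ toggles the lamp at the current position $m$. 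I will send $v$ to $g_v:=(\phi_v,0)$, where $\phi_v$ is supported on $[1,2\ell]$ with $\phi_v(j)=b(v)_j$ and $\phi_v(\ell+j)=b(v)_j$ for $1\le j\le\ell$, and I will route an edge $\{u,v\}$ with $u<v$ by the walk $R_{uv}$ that starts at $g_u$, moves the lamplighter through positions $1,2,\dots,2\ell$ in turn (toggling the lamp at position $p$ exactly when $\phi_u(p)\neq\phi_v(p)$), and then moves the lamplighter left from $2\ell$ back to $0$; by then the configuration is $\phi_v$, so the walk ends at $g_v$. Each step of $R_{uv}$ is right multiplication by a generator, so this defines a wiring $f\colon\Gamma\to Y$.

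Next I would dispatch the metric bounds. The rightward phase of $R_{uv}$ uses at most $4\ell$ steps ($2\ell$ moves and at most $2\ell$ toggles) and the leftward phase uses $2\ell$, so $|R_{uv}|\le 6\ell$ and hence $\vol(f)\le n+|E\Gamma|(6\ell-1)\le n+\tfrac12 dn(6\ell-1)\le dn\bigl(3\lceil\log_2 n\rceil+\tfrac12\bigr)$, using $d\ge1$. Every vertex appearing in $\im(f)$ has the form $(\phi,m)$ with $\mathrm{supp}(\phi)\subseteq[1,2\ell]$ and $m\in[0,2\ell]$; an elementary one-dimensional travelling-salesman estimate — the number of lamps that must change between two such configurations is at most one more than the length of the position-interval that must be swept, which in turn caps the optimal travel — shows any two such elements lie at distance at most $6\ell$ in $Y$, so $\diam(f)\le 6\lceil\log_2 n\rceil$.

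The heart of the argument is the coarse-$2d$ property. Since $v\mapsto g_v$ is injective, every fibre of $f$ is a single vertex, and it remains to bound, for a fixed edge $e$ of $Y$, the number of $\Gamma$-edges whose route traverses $e$. Suppose first $e$ is a toggle edge at position $m$; it is used only during rightward phases, and at that instant positions $1,\dots,m-1$ of the configuration already agree with $\phi_v$ and positions $m+1,\dots,2\ell$ still agree with $\phi_u$. Here is the point of the doubled encoding: since $\max(m-1,\,2\ell-m)\ge\ell$, the longer of the initial segment $[1,m-1]$ and the final segment $[m+1,2\ell]$ contains one of the blocks $[1,\ell]$, $[\ell+1,2\ell]$, on which $\phi_v$ and $\phi_u$ restrict to the \emph{full} labels $b(v)$, $b(u)$; thus the data of $e$ already determines either $v$ or $u$ completely, and since that vertex has degree $\le d$ at most $d$ edges route through $e$. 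If instead $e$ is a move edge, the same reasoning bounds the rightward-phase contribution by $d$, while $e$ is used in a leftward phase only when the configuration is exactly $\phi_v$, which again pins down $v$ and contributes at most $d$ more; in total at most $2d$.

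I expect this last step to be the only genuine obstacle, and it is exactly where the doubled encoding is unavoidable: with a single copy of $b(v)$ the configuration at an intermediate edge is determined only by a short prefix of one label together with a short suffix of the other, and an adversarial bounded-degree $\Gamma$ can then funnel far more than $d$ edges through one vertex of $Y$. Once the doubling is in place the rest — that the $R_{uv}$ are honest walks, the volume and diameter arithmetic, and the interval estimate — is routine bookkeeping, with the edge case $d=0$ (or $n\le 1$) handled trivially.
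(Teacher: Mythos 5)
Your proof is correct and takes essentially the same approach as the paper's: doubled binary encoding of vertices into lamp configurations supported on an interval of length $2\lceil\log_2 n\rceil$, a left-to-right correcting sweep followed by an uninterrupted return, and the key observation that at any intermediate edge of the walk one of the two length-$\ell$ blocks is entirely in its ``before'' or ``after'' state, so the edge pins down at least one endpoint and hence lies on at most $2d$ paths. The only differences from the paper are cosmetic (1-indexed positions, phrasing the paper's $(\dagger)/(\ddagger)$ dichotomy as a toggle-edge/move-edge case split, and the explicit travelling-salesman remark for the diameter).
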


\begin{proof}
Set $k=\lceil\log_2(n)\rceil$. For each $0\leq i \leq n-1$ and $0\leq j \leq k-1$ fix $i_j\in\{0,1\}$ such that $\sum_{j=0}^{k-1} 2^ji_j=i$.

Enumerate the vertices of $\Gamma$ as $v_0,\ldots,v_{n-1}$. All the points in the image of the wiring will have their lamplighter position and lamp functions supported on the set $\{0,\ldots,2k-1\}$, so we represent elements of $\Z_2\wr\Z$ by a binary string of length exactly $2k$ (for the element of $\bigoplus_{\Z}\Z_2$) with one entry marked by a hat ($\hat{\ }$) to indicate the position of the lamplighter (for the element of $\Z$). Note that this set has diameter at most $6k=6\lceil\log_2(n)\rceil$.

Now we map each $v_i$ to $\hat{i_{0}}i_1\ldots i_{k-1}i_0i_1\ldots i_{k-1}$ and for each edge $v_iv_j$ we assign the path $P_{ij}$ which travels from left to right correcting the binary string as it goes, then returns to the leftmost position:
\begin{eqnarray}
\hat{i_{0}}i_1\ldots i_{k-1}i_0i_1\ldots i_{k-1} & \to & \widehat{j_{0}}i_1\ldots i_{k-1}i_0i_1\ldots i_{k-1} \label{wr1}\\
& \to & j_{0}\widehat{i_1}\ldots i_{k-1}i_0i_1\ldots i_{k-1}  \label{wr2}\\
\ldots & \to & j_{0}j_1\ldots \widehat{j_{k-1}}i_0i_1\ldots i_{k-1} \label{wr3}\\
\ldots & \to & j_{0}j_1\ldots j_{k-1}j_0j_1\ldots \widehat{j_{k-1}} \label{wr4}\\
& \to & j_{0}j_1\ldots j_{k-1}j_0j_1\ldots \widehat{j_{k-2}}j_{k-1} \label{wr5}\\
\ldots & \to & \widehat{j_{0}}j_1\ldots j_{k-1}j_0j_1\ldots j_{k-1}.\label{wr6}
\end{eqnarray}
Now suppose an edge $e$ lies on one of the paths $P_{ij}$. Choose one of the end vertices and denote the binary string associated to this vertex by $a_0\ldots a_{2k-1}$. We claim that at least one of the following holds:
\[
 i = \sum_{l=0}^{k-1} 2^la_{k+l} \ (\dagger) \quad\quad j = \sum_{l=0}^{k-1} 2^la_l \ (\ddagger)
\]
In particular, as the graph $\Gamma$ has maximal degree at most $d$, this means that there are at most $2d$ paths containing the edge $e$.

If $e$ appears on $P_{ij}$ during stages (\ref{wr1}), (\ref{wr2}) or (\ref{wr3}), then $a_{k+l}=i_l$ for $0\leq l \leq k-1$. Thus $(\dagger)$ holds. Otherwise, $e$ appears on $P_{ij}$ during stages (\ref{wr4}), (\ref{wr5}) or (\ref{wr6}), then $a_{l}=j_l$ for $0\leq l \leq k-1$. Thus $(\ddagger)$ holds.

For the volume estimate, each path $P_{ij}$ meets at most $6k+1$ vertices and there are $|E\Gamma|\leq\frac12nd$ paths.
\end{proof}

\section{From fine wirings to coarse wirings and back}\label{sec:thicktocoarse}

In this section we prove Proposition~\ref{prop:coarse_wiring} and
Theorem~\ref{thm:thickemb}, which describe circumstances in which one
can translate between thick embeddings of a graph into a metric space and
coarse wirings of that graph into a graph quasi-isometric to the metric space.

\subsection{Fine to coarse}
In this subsection we will prove Proposition \ref{prop:coarse_wiring}. 

\begin{definition}\label{defn:controlledgrowth}
	Let $\mu$ be a measure on a metric space $X$. We say $(X,\mu)$ has \textbf{controlled growth} if for every $r>0$
	\[
	 c_r:=\inf_{x\in X} \mu(B_r(x))>0 \quad \textup{and} \quad C_r:=\sup_{x\in X} \mu(B_r(x))<+\infty.
	\]
\end{definition}

Let us recall the statement.

\begin{proposition*} Let $M$ be a Riemannian manifold with controlled growth and let $Y$ be a graph
    quasi-isometric to $M$. For any $d \in \N$ and $T>0$, there exists a constant $k$
    depending only on $d$, $M$, $T$ and $Y$ such that if $\Gamma$ is a finite graph
    with maximal degree $d$ and there is a $T$-thick embedding $\phi:\Gamma\to M$ with diameter $D$ and volume $V$ then there is
    a coarse $k$-wiring of $\Gamma$ into $Y$ with diameter at most $kD$ and volume at most $kV$.
\end{proposition*}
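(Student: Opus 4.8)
The plan is to convert the thick embedding into a coarse wiring in three stages: first pass from $M$ to a net in $M$, then transfer along the quasi-isometry to $Y$, and finally check the two counting conditions defining a coarse $k$-wiring.

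First I would fix a maximal $T/3$-separated subset $\mathcal{N}\subset M$; this is a $T/3$-net, and by controlled growth there is a uniform bound $B=B(M,T)$ on the number of points of $\mathcal{N}$ in any ball of radius $1$ (or indeed any fixed radius), and a uniform lower bound on the measure each net point ``carries''. Given the $T$-thick topological embedding $\phi:\Gamma\to M$ with diameter $D$ and volume $V$, I would build a combinatorial model of $\im(\phi)$ inside the net: for each vertex $v$ of $\Gamma$ choose a net point $n(v)$ within $T/3$ of $\phi(v)$, and for each edge $e$ of $\Gamma$ subdivide the curve $\phi(e)$ finely and replace it by a sequence of net points, consecutive ones within a uniformly bounded distance, so that the edge is modelled by a path in the graph $G$ on $\mathcal{N}$ with edges joining net points at distance $\leq T$. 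Controlled growth (upper bound $C_T$) makes $G$ a bounded-degree graph, and since $\phi$ is $T$-thick, net points associated to two vertices, or to a vertex and a non-incident edge, or to two disjoint edges, are forced apart — this is exactly what will give the ``$\leq k$ preimages'' and ``$\leq k$ uses of an edge'' bounds after adjusting constants for the subdivision and for the bounded degree of $G$.

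Next I would transfer from $G$ (equivalently, from the net $\mathcal{N}$ with the $T$-scale metric, which is quasi-isometric to $M$) to $Y$. Since $Y$ and $M$ are quasi-isometric, there is a quasi-isometry $\rho:M\to Y$, and $\rho$ restricted to $\mathcal{N}$ is a regular map from a bounded-degree graph to the bounded-degree graph $Y$ in the sense of Definition~\ref{defn:regular}; one then applies the composition/regularity machinery (Proposition~\ref{prop:regular}, or more directly its proof) to push the coarse wiring of $\Gamma$ into $G$ forward to a coarse $k'$-wiring of $\Gamma$ into $Y$, at the cost of multiplying the multiplicity parameter and the volume by constants depending only on the quasi-isometry constants and the degree bounds. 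The diameter bound $kD$ follows since quasi-isometries and the net construction distort distances by bounded multiplicative and additive amounts, and $D\geq 1$ may be assumed (otherwise $\Gamma$ has bounded size and the statement is trivial). For the volume bound $kV$: the number of net points used is at most $C$ times $|\mathcal{N}\cap \im(\phi)^{+1}|$, and each such net point contributes at least $c_{T/6}>0$ to $\vol(\phi)$ by disjointness of small balls, so $|\mathcal{N}\cap\im(\phi)^{+1}|\leq c_{T/6}^{-1}V$; passing to $Y$ multiplies this by a further constant.

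The main obstacle I expect is the bookkeeping in the edge-subdivision step: one must choose the subdivision of each curve $\phi(e)$ fine enough that consecutive net points are within $T$, yet control how many times a given net point (or edge of $G$) can be visited, both along a single edge-path and across different edge-paths. Bounding visits across different edges is handled by $T$-thickness together with controlled growth (only boundedly many curves can come within $T/3$ of a given net point), but bounding repeated visits along a single curve requires care — one should take each $\phi(e)$ to be a path that is, say, $1$-Lipschitz after reparametrisation (possible since we may first replace $\phi$ by a topological embedding whose edges are rectifiable of length $\leq 10 D$, absorbing this into $D$), and then the number of net points on it is $O(D)$ with each used boundedly often. Once these multiplicities are made uniform, all remaining estimates are routine applications of controlled growth and the quasi-isometry bounds.
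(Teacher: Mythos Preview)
Your approach is essentially the paper's, with the cosmetic difference that you route through an auxiliary net-graph $G$ on $\mathcal{N}$ and then push forward via Proposition~\ref{prop:regular}, whereas the paper composes $\phi$ directly with a quasi-isometry $f\colon M\to Y$ and perturbs to vertices. Both arguments use controlled growth in exactly the same two places: to bound vertex/edge multiplicities via packing of balls of radius comparable to $T$, and to control the number of points used by the wiring.

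The one genuine issue is in your final paragraph. The step ``replace $\phi$ by a topological embedding whose edges are rectifiable of length $\leq 10D$'' is unjustified: nothing in the hypotheses lets you do this while preserving $T$-thickness and the volume bound --- an edge of $\phi$ may be an arbitrarily long embedded arc sitting inside a tube of bounded volume. Fortunately you do not need it. The two conditions defining a coarse $k$-wiring ask only how many \emph{different} edges of $\Gamma$ use a given edge of $G$ (not how many times one edge traverses it), and your volume argument already counts only \emph{distinct} net points in the image (each contributing $\geq c_{T/6}$ to $V$). So repeated visits by a single edge are harmless, and you can simply drop that paragraph.

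For comparison, the paper handles the subdivision differently: for each edge $vv'$ it takes a \emph{minimal} sequence $w'_0,\dots,w'_{n_{vv'}}$ along $\phi(vv')$ with consecutive points at distance $\le 2T$. Minimality forces non-adjacent $w'_i$ to be $>T$ apart, so the $T/2$-balls around them are disjoint (up to an overlap factor of $2d+1$ at shared endpoints), yielding $\sum_{vv'} n_{vv'}\le (2d+1)c_{T/2}^{-1}V$. This bounds the total walk length --- a stronger statement than you need, but it sidesteps the issue you were worried about rather than observing (as you should) that it was never an issue at all.
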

\begin{proof}
    Let $f\colon M\to Y$ be a (possibly discontinuous) quasi-isometry. Let
    $\lambda\geq 1$ be such that
    \begin{enumerate}
        \item $\frac{1}{\lambda}d_Y(f(x_1), f(x_2))-\lambda\leq d_M(x_1,x_2) \leq \lambda d_Y(f(x_1), f(x_2)) + \lambda$ for $x_1$ and $x_2$ in $M$, and
        \item for any $y \in Y$, there exists $x \in M$ with $d_Y(y, f(x)) \leq \lambda$.
    \end{enumerate}
    We show that $f\phi$ can be perturbed to obtain a coarse wiring $\psi$.

    For $v \in V\Gamma$, let $\psi(v)$ be any vertex of $Y$ within distance $\frac12$
    of $f\phi(v)$. If $w$ is another vertex of $\Gamma$ with $\psi(w) =
    \psi(v)$ then $d_M(\phi(v), \phi(w)) \leq 3\lambda$. But, for any distinct pair of vertices $v,w$, $d_M(\phi(v),
    \phi(w)) \geq T$, so it follows that at most $C_{3\lambda + T/2}/c_{T/2}$
    vertices of $\Gamma$ map under $\psi$ to $\psi(v)$.

    We now describe a collection of paths $P_{vv'}$ in $Y$ as
     $v$ and $v'$
    range over pairs of adjacent vertices in $\Gamma$. The restriction of
    $\phi$ to the edge $vv'$ is a continuous path in $M$; choose a sequence
    $\phi(v) = w_0', \dotsc, w_n' = \phi(v')$ of points on this path with $n$
    minimal such that $d(w_i', w_{i+1}') \leq 2T$ for each $i$. Denote this minimal $n$ by $n_{vv'}$. Choose
    $w_0 = \psi(v)$, $w_n = \psi(v')$ and for each $1\leq i\leq n-1$ let $w_i$ is a vertex of $Y$ within
    distance $\frac12$ of $f(w_i')$. For each $i$ we have 
    \begin{eqnarray*}
    d_Y(w_i, w_{i+1}) & \leq & 1+d_Y(f(w'_i),f(w'_{i+1})) \\ & \leq & 1 + \lambda d_M(w'_i,w'_{i+1})+\lambda^2 \\ & \leq & 1 + 2\lambda T +\lambda^2:=L,
	\end{eqnarray*}
    so can be joined by an edge path comprising at most $L$
    edges. We define the path $P_{vv'}$ to be the concatenation of these $n_{vv'}$ paths of length at most $L$.

    We extend $\psi$ to a continuous map which sends each edge $vv'$ to the path $P_{vv'}$. We claim that $\psi$ is a
    coarse wiring with the appropriate bounds on diameter and volume. 
    
    Firstly, we bound the diameter. Note that every point in $\im(\psi)$ is within distance $(L+1)/2$ of some $f(w')$ with $w'\in\im(\phi)$. Let $x,y\in \im(\psi)$ and let $v,w \in \Gamma$ satisfy $d_Y(x,f\phi(v)),d_Y(y,f\phi(w))\leq (L+1)/2$. We have
    \begin{eqnarray*}
     d_Y(x,y) & \leq & d_Y(x,f\phi(v)) + \lambda\left(d_M(\phi(v),\phi(w))+\lambda\right) + d_Y(f\phi(w),y) \\
     & \leq & L+1 +\lambda.\diam(\phi)+\lambda^2 \\
     & \leq & C(T,\lambda).\diam(\phi).
    \end{eqnarray*}
The final inequality fails if $\Gamma$ is a single vertex, but the proposition obviously holds in this situation. Otherwise $\diam(\phi)\geq T$ and the inequality holds for a suitable $C$.

    Next we bound the volume of the wiring. The bound follows from the two inequalities 
    \[
    \vol(\phi)\geq \frac{c_{T/2}}{2d+1}\left(|V\Gamma|+ \sum_{vv'\in E\Gamma} n_{vv'}\right) \quad \textup{and} \quad \vol(\psi) \leq |V\Gamma| + L\sum_{vv'\in E\Gamma} n_{vv'}.
	\]
For the second bound, each vertex in $V\Gamma$ contributes at most $1$ vertex to $\vol(\psi)$ and each path $P_{vv'}$ contributes at most $Ln_{vv'}$ vertices to $\vol(\psi)$. For the first bound, notice that the (open) balls of radius $T/2$ around the image of each vertex are necessarily disjoint. Similarly, the balls of radius $T/2$ centred at any two points in one of the sequences $\phi(v) = w_0', \dotsc, w_n' = \phi(v')$ defined above are necessarily disjoint: if this were not the case for $w'_j$ and $w'_{j'}$, we must have $|j-j'|\geq 2$ since $d(w'_i,w'_{i+1})\geq T$ for all $i$, but then we can remove $w'_{j+1},\ldots,w'_{j'-1}$ from the above sequence, contradicting the minimality assumption. Moreover, if two balls of radius $T/2$ centred at points on sequences corresponding to different edges have non-trivial intersection, then these edges must have a common vertex since $\phi$ is a $T$-thick embedding. Thus, the $T$-neighbourhood of the image of $\phi$ contains a family of $\left(|V\Gamma|+ \sum_{vv'\in E\Gamma} n_{vv'}\right)$ balls of radius $T/2$, such that no point is contained in more than $2d+1$ of these balls ($d$ for each end vertex, and an extra $1$ if the point is within distance $T/2$ of the image of a vertex). As a result
\[
 \vol(\phi)\geq \frac{c_{T/2}}{2d+1}\left(|V\Gamma|+ \sum_{vv'\in E\Gamma} n_{vv'}\right).
\]
	    
    It remains to prove that we have defined a coarse $k$-wiring. It is sufficient to show that there is a constant $k$
    depending only on $\lambda$ and the growth rates $c$ and $C$ of volumes in
    $M$ such that any edge of $Y$ is contained in $P_{vv'}$ for at most
    $k$ edges $vv' \in E\Gamma$.

    Let $uu'$ be an edge of $Y$ contained in at least one path in the
    collection $P$. Let $A$ be the subset of $E\Gamma$ comprising edges $e$
    such that $P_{e}$ contains $uu'$. As noted during the proof of the diameter bound every point in $P_{e}$ is contained in the $(L+1)/2$-neighbourhood of $f(\phi(e))$ so there is a point 
    $x_e \in \phi(e)$ such that $d_Y(u, f(x_e)) \leq (L+1)/2$, and so for
    any other edge $e' \in A$, 
    \begin{align*}
        d_M(x_e, x_{e'}) \leq \lambda \left(d_Y(f(x_e),u)+d_Y(u,f(x_{e'}))\right)+\lambda  \leq \lambda(L+2).
    \end{align*}

    For any edge $e' \in A$, $x_{e'}$ is within distance $T$ of at most
    $2d$ of the points $x_{e''}$ for $e'' \in A$. It follows that the size of $A$
    is at most $2d c_{T/2}^{-1}C_{\lambda(L+2)+T/2}$.
\end{proof}

\subsection{Coarse to fine}

The return direction is more sensitive and we are not able to obtain $1$-thick
embeddings in all cases. When the target space is Euclidean this is easily
resolved by rescaling, but in other spaces changing thickness potentially has a
more drastic effect on the volume. Let us recall Theorem \ref{thm:thickemb}.

\begin{theorem*}
	Let $M$ be a compact Riemannian manifold of dimension $n\geq 3$, let $Y$ be a graph quasi-isometric to the universal cover $\widetilde{M}$ of $M$ and let $k,d\in\N$. There exist constants
	$C$ and $\varepsilon>0$ such that the following holds:
	
	If there is a coarse $k$-wiring of a finite graph $\Gamma$ with maximal degree $d$ into $Y$ with diameter $D$ and volume $V$ then there is a $\varepsilon$-thick embedding
	of $\Gamma$ into $\widetilde{M}$ with diameter at most $CD$ and volume at most $CV$.
\end{theorem*}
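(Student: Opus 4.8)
The plan is to start from a coarse $k$-wiring $\psi\colon\Gamma\to Y$ and build a thick embedding $\Phi\colon\Gamma\to\widetilde M$ by following the wiring "in $\widetilde M$", using the local geometry of the compact manifold $M$ together with deck transformations to keep curves uniformly apart. First I would fix a quasi-isometry $g\colon Y\to\widetilde M$ and replace $Y$ by the image net, so that the problem becomes: given the wiring data in $\widetilde M$ (a bounded-degree net of "vertex points" and "edge paths" with bounded multiplicity), thicken it into an honest topological embedding. For each vertex $v\in V\Gamma$ with $\psi(v)$ mapping near a point $p_v\in\widetilde M$, I would place $\Phi(v)$ at a point chosen from a small controlled cloud of candidate positions around $p_v$; since at most $k$ vertices of $\Gamma$ share the same image vertex of $Y$, and $\widetilde M$ has dimension $n\geq 3$, there is room (in a ball of fixed radius, which embeds isometrically as a ball in $M$ once the radius is below the injectivity radius) to separate these $\le k$ points by a uniform $\varepsilon$. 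Similarly, each edge $e$ of $\Gamma$ maps to an edge path in $Y$; I would realise it as a concatenation of short geodesic-like segments in $\widetilde M$ between consecutive vertex-clouds, again choosing among finitely many parallel "tracks" so that edges sharing an edge of $Y$ (at most $k$ of them) run in distinct tracks.

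The key idea, as flagged in the paragraph after Theorem~\ref{thm:thickemb}, is to do the local separation \emph{once} inside the compact manifold $M$ and then transport it by deck transformations. Concretely: cover $\widetilde M$ by translates $\gamma U$ of a fixed relatively compact fundamental-type region $U$; inside $U$ one has a fixed finite list of model configurations (how up to $k$ vertex-points sit in a ball, how up to $k$ edge-strands pass through, how strands turn the corner at a vertex cloud). For each such finite model configuration, disjointness of the relevant curves is a closed condition on a compact parameter space, so a disjoint realisation can be chosen and, crucially, the pairwise distances between distinct curves are bounded below by a single positive constant $\varepsilon=\varepsilon(M,k,d)$ by compactness. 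Applying deck transformations (which are isometries of $\widetilde M$) carries these model realisations to every translate while preserving the uniform $\varepsilon$-separation. One then glues the local pieces: inside each vertex cloud I would route all incident edge-strands (there are at most $d$ of them, each of multiplicity $\le k$, so $\le dk$ strands) to their assigned endpoints without collisions, which is possible in dimension $\ge 3$ for the same compactness reason; the global map $\Phi$ is the union of these vertex gadgets and the track segments along edges, and it is a topological embedding because along each edge of $Y$ only boundedly many strands appear and they have been chosen disjoint.

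For the metric bounds: the diameter of $\Phi(\Gamma)$ is comparable to the diameter of $\im(\psi)$ because $g$ is a quasi-isometry and all the local perturbations move points by at most a fixed constant, giving $\diam(\Phi)\le CD$. For the volume, the $1$-neighbourhood (equivalently the $\varepsilon$-neighbourhood up to a constant, using controlled growth of $\widetilde M$, which holds since $\widetilde M$ is a cover of a compact manifold) of $\Phi(\Gamma)$ is covered by a uniformly bounded number of balls of fixed radius per vertex of $\im(\psi)$: each vertex gadget and each edge-segment lies in such a ball, the number of edge-segments is linear in $V$ (bounded degree), and $V=\vol(\psi)$; hence $\vol(\Phi)\le CV$.

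The main obstacle I expect is the bookkeeping that makes the "choose among finitely many tracks" step genuinely uniform: one must show that the combinatorial type of the local configuration near any vertex or along any edge of $Y$ ranges over only finitely many possibilities (using that $\Gamma$ has degree $\le d$, the wiring is $k$-bounded, and $Y$ has bounded degree), and that the disjoint realisations chosen for overlapping local pieces are \emph{compatible} where the pieces meet — i.e. the track labels handed off from an edge-segment into a vertex gadget match. Handling this cleanly probably requires first passing to a "normalised" wiring (subdividing $Y$, or thickening $Y$ to a graph where vertices have pairwise-disjoint stars of fixed radius) so that the local pieces overlap only in a controlled, model way; once that normalisation is in place the compactness-plus-deck-transformation argument goes through as sketched.
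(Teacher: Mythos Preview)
Your approach is correct and is essentially the paper's. The ingredients you list---placing at most $k$ vertices in a fixed ball, routing at most $k$ edge-strands along parallel ``tracks'', using general position in dimension $\ge 3$, and extracting a uniform $\varepsilon$ from compactness of $M$ then propagating it by deck transformations---are exactly what the paper uses.

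The one place where the paper is cleaner is precisely your stated obstacle: the compatibility of track labels where edge-segments meet vertex gadgets. The paper resolves this by separating the problem into two independent steps. First, a purely combinatorial lemma: replace $Y$ by its $T$-thickening $K_T(Y)$ (vertex set $VY\times\{1,\dots,T\}$, with $T=k(d+1)$), and upgrade the coarse $k$-wiring $\Gamma\to Y$ to an \emph{injective} wiring $\Gamma\to K_T(Y)$, with diameter and volume increased only by fixed factors. This is exactly the ``thickening $Y$'' you suggest in your last paragraph, and once the wiring is injective there is no longer any multiplicity to bookkeep. Second, a single geometric lemma: construct once and for all a $G$-equivariant $\varepsilon$-thick embedding of the whole infinite graph $K_T(\mathcal G_x^L)$ into $\widetilde M$, by embedding one copy of $K_T$ and one arc per short homotopy class into $M$ (finitely many curves, general position, compactness gives $\varepsilon>0$) and lifting. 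The desired thick embedding of $\Gamma$ is then simply the composition of the injective wiring with this fixed equivariant embedding, and the diameter and volume bounds are immediate. So your ``normalisation'' instinct is right, and carrying it out as a standalone thickening step before any geometry makes the gluing issue disappear.
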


The proof of this result is completed in several steps. As we are aiming to construct a topological embedding, the first step is to replace the coarse $k$-wiring $\Gamma\to Y$ with an injective continuous function $\Gamma\to Y'$ where $Y'$ is a ``thickening'' of $Y$. Exploiting the symmetries in the universal cover, we choose $Y$ (and its thickening) to be cocompact with respect to the action of $\pi_1(M)$, this reduces the problem of embedding the thickening of $Y$ to defining finitely many paths in $M$. We then use the fact that $M$ is compact to obtain a positive lower bound on the thickness of the topological embedding. 

Using Proposition \ref{prop:regular} and the fact that quasi-isometries of bounded degree graphs are regular, it suffices to prove
Theorem~\ref{thm:thickemb} for a specific bounded degree graph
quasi-isometric to $\widetilde M$.

We require a standard \u{S}varc-Milnor lemma.
\begin{lemma}\label{lem:MilnorSvarc}
    Let $x \in M$. Then, for sufficiently large
    $L$, the graph $\calG_x^L$ with vertex set equal to the preimage of $x$ in
    $\widetilde M$, with vertices connected by an edge if and only
    if they are separated by a distance of at most $L$ in $\widetilde M$, is quasi-isometric to $\widetilde M$.
\end{lemma}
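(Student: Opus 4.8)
The statement is: for a compact Riemannian manifold $M$, a point $x\in M$, and its preimage $p^{-1}(x)\subseteq\widetilde M$ under the covering map $p\colon\widetilde M\to M$, the graph $\calG_x^L$ on vertex set $p^{-1}(x)$ with an edge between vertices at distance $\leq L$ in $\widetilde M$ is quasi-isometric to $\widetilde M$ for all sufficiently large $L$.

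The plan is to run the standard Švarc–Milnor argument with the cocompact, properly discontinuous action of $G=\pi_1(M)$ on $\widetilde M$ by deck transformations, but to check that the graph $\calG_x^L$ is quasi-isometric to $G$ with any word metric. First I would fix a fundamental domain: since $M$ is compact, there is $R>0$ such that the $G$-translates of the ball $B_R(\tilde x)$ cover $\widetilde M$, where $\tilde x$ is a chosen lift of $x$; equivalently, every point of $\widetilde M$ is within $R$ of some point of the orbit $G\tilde x = p^{-1}(x)$. Set $L_0 = 2R+1$ and let $S=\{g\in G: d_{\widetilde M}(\tilde x, g\tilde x)\leq L_0\}$. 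One checks $S$ is finite (proper discontinuity of the action plus cocompactness) and generates $G$: given $g\in G$, a geodesic from $\tilde x$ to $g\tilde x$ can be subdivided into segments of length $\leq L_0/2$, each endpoint moved to a nearby orbit point, expressing $g$ as a product of elements of $S$. This is the classical argument and I would cite it rather than rewrite it. Now for any $L\geq L_0$, I claim the orbit map $g\mapsto g\tilde x$ is a quasi-isometry $(G, d_S)\to (\calG_x^L, d_{\calG})$: adjacency in $\calG_x^L$ (distance $\leq L$ in $\widetilde M$) corresponds to a bounded number of $S$-steps in $G$, because two orbit points at distance $\leq L$ in $\widetilde M$ differ by an element $g$ with $d_{\widetilde M}(\tilde x,g\tilde x)\leq L$, and such $g$ lie in a ball of fixed $d_S$-radius (finitely many group elements, by proper discontinuity); conversely one $S$-step moves $\tilde x$ by $\leq L_0\leq L$, so it is a single edge in $\calG_x^L$. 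Hence the two metrics on $p^{-1}(x)=G\tilde x$ are bi-Lipschitz.

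The final link is that the inclusion $p^{-1}(x)\hookrightarrow\widetilde M$, with domain metric $d_{\calG_x^L}$, is a quasi-isometry onto $\widetilde M$. Coarse surjectivity is exactly the covering property above ($R$-dense orbit). For the metric comparison: on one side, an edge of $\calG_x^L$ has $\widetilde M$-length $\leq L$, so a path of $n$ edges gives $\widetilde M$-distance $\leq Ln$, i.e.\ $d_{\widetilde M}\leq L\cdot d_{\calG_x^L}$. On the other side, given two orbit points $y,y'$ with $d_{\widetilde M}(y,y')=t$, subdivide a geodesic between them into $\lceil t/R\rceil$ pieces of length $\leq R$, replace each subdivision point by an orbit point within $R$ of it; consecutive replacements are then within $3R\leq L_0\leq L$ of each other in $\widetilde M$, hence joined by an edge of $\calG_x^L$, giving $d_{\calG_x^L}(y,y')\leq \lceil t/R\rceil + 1 \leq \frac1R d_{\widetilde M}(y,y') + 2$. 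Combining, the inclusion is a $(\max(L,1/R), 2)$-quasi-isometric embedding with $R$-dense image, hence a quasi-isometry. Composing with the bi-Lipschitz identification of $(G,d_S)$ with $(\calG_x^L,d_{\calG})$ is not even needed for the statement, but it records that $\calG_x^L\simeq G\simeq\widetilde M$ as expected.

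I do not expect any serious obstacle here: this is a packaging of the textbook Švarc–Milnor lemma, and the only points requiring care are (i) verifying that the finiteness constants (size of $S$, number of orbit points in a ball of given radius) are uniform, which follows from compactness of $M$ together with proper discontinuity of the deck action, and (ii) confirming that the same $L_0$ works for every larger $L$, since enlarging $L$ only adds edges and shortens $d_{\calG_x^L}$ by at most a bounded factor while never destroying coarse surjectivity. The mild subtlety worth stating explicitly in the write-up is why $d_{\calG_x^L}$ does not collapse too much as $L$ grows: two orbit points joined by an edge are within $L$ in $\widetilde M$, so the lower bound $d_{\widetilde M}\leq L\cdot d_{\calG_x^L}$ degrades linearly in $L$ but, crucially, $L$ is a fixed constant once chosen, so this is harmless for a fixed statement.
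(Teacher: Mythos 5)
The paper does not prove this lemma: it is stated as ``a standard \v{S}varc--Milnor lemma'' and the proof is implicitly deferred to the literature. Your proposal is a correct rendering of that standard argument, so there is no difference of approach to compare. One small numerical slip is worth fixing: you set $L_0 = 2R+1$, but the two subdivision arguments you invoke both need a larger threshold. In the metric comparison, consecutive subdivision points of a geodesic are within $R$ of each other, and replacing each by an orbit point within $R$ moves it by $R$, so consecutive replacements are within $3R$ (not $2R+1$) of each other; thus you need $L_0 \geq 3R$. Similarly, in the generating-set argument you subdivide into pieces of length $\leq L_0/2$ and replace endpoints by orbit points within $R$, which produces elements $g$ with $d_{\widetilde M}(\tilde x, g\tilde x)\leq L_0/2 + 2R$, and for these to lie in $S$ you need $L_0/2 + 2R \leq L_0$, i.e.\ $L_0 \geq 4R$. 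Taking $L_0 = 4R+1$ (say) fixes both at once; the argument is otherwise sound and the rest of your proof (bi-Lipschitz identification of the orbit graph with $(G,d_S)$, coarse surjectivity from the $R$-dense orbit, and the two-sided metric comparison) is exactly the standard reasoning the paper has in mind.
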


Now we assume that $Y=\calG_x^L$ for a suitably chosen $L$. The next step is to ``thicken'' $Y$ to a graph $Y'$ to obtain injective wirings.

\begin{definition} A wiring $f:\Gamma\to Y$ of a finite graph $\Gamma$ into a graph $Y$ is called an \textbf{injective wiring} if $f$ is injective.
\end{definition}

\begin{definition} Given a graph $Y$ and $T\in\N$ we define the \textbf{$T$-thickening} of $Y$ to be the graph $K_T(Y)$ with vertex set $VY\times\set{1,\ldots,T}$ and edges $\{(v,i),(w,j)\}$ whenever either $v=w$ and $1\leq i < j \leq T$, or $\{v,w\}\in EY$ and $1\leq i \leq j \leq T$.
\end{definition}

\begin{lemma}\label{lem:injwiring} For all $d,k\in\N$ there exists some $T$ with the following property. If there is a coarse $k$-wiring $\psi:\Gamma\to Y$ then there is an injective wiring $\psi':\Gamma\to K_T(Y)$, such that $\diam(\psi')\leq \diam(\psi)+2$ and $\vol(\psi') \leq T \vol(\psi)$.
\end{lemma}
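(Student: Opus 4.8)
The plan is to simplify $\psi$ and then ``spread it out'' across the $T$ fibres of $K_T(Y)$ so that distinct strands of the wiring sit at distinct levels. First I would pass to a tidier coarse $k$-wiring $\psi_0$ with the same vertex map as $\psi$, obtained by repeatedly deleting cycles from each walk $\psi(e)$ until it is a simple path --- which degenerates to a single vertex precisely when $\psi$ identifies the two endpoints of $e$. As each $\psi_0(e)$ is a subwalk of $\psi(e)$ the image only shrinks, so $\psi_0$ is again a coarse $k$-wiring and $\diam(\psi_0)\le\diam(\psi)$, $\vol(\psi_0)\le\vol(\psi)$. The point of the reduction is that it controls the ``load'' at each vertex $v$ of $Y$: at most $k$ vertices of $\Gamma$ are sent to $v$ by condition~(1), and $v$ is an interior vertex of the path $\psi_0(e)$ for at most $\tfrac12\Delta_Y k$ edges $e$, where $\Delta_Y$ is the maximal degree of $Y$ --- such a passage uses two of the $\le\Delta_Y$ edges of $Y$ incident to $v$, and each of these lies on at most $k$ of the paths $\psi_0(e')$ by condition~(2). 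I would then set $T:=k+\lceil\tfrac12\Delta_Y k\rceil$, which depends only on $k$ and on $Y$ (fixed throughout this section).

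Next I would construct $\psi'$ fibrewise. For each vertex $v\in\im(\psi_0)$, assign distinct elements of $\{1,\dots,T\}$ as ``levels'' to the $\le k$ vertices $u$ with $\psi_0(u)=v$ --- write $\ell_v(u)$ --- and to the $\le\tfrac12\Delta_Y k$ edges $e$ having $v$ as an interior vertex of $\psi_0(e)$; the load bound makes $T$ levels enough. This determines a level of $e$ at $v$ for every incidence $v\in\psi_0(e)$ with $\psi_0(e)$ a genuine path, namely $\ell_v(u)$ when $v=\psi_0(u)$ for the (then unique) endpoint $u$ of $e$ mapping to $v$, and the level assigned to $e$ at $v$ when $v$ is interior. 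Then define $\psi'$ on vertices by $\psi'(u)=(\psi_0(u),\ell_{\psi_0(u)}(u))$; send an edge $e$ with $\psi_0(e)=(v_0,\dots,v_n)$ to the lift whose $t$-th vertex is $v_t$ paired with the level of $e$ at $v_t$ --- a walk in $K_T(Y)$ since consecutive fibres span a complete bipartite graph --- and send an edge whose image has collapsed to a single vertex $v$ to the clique edge over $v$ joining the (distinct) levels of its two endpoints. By construction $\im(\psi')$ projects into $\im(\psi_0)\subseteq\im(\psi)$.

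The estimates then follow at once: $\vol(\psi')\le T\abs{\im(\psi_0)}\le T\vol(\psi)$, and since two points of $K_T(Y)$ over vertices $v,w$ of $Y$ lie at distance $\le d_Y(v,w)+1$ (lift a $Y$-geodesic, choosing intermediate levels freely), $\diam(\psi')\le\diam(\psi_0)+1\le\diam(\psi)+2$. The real work is checking that $\psi'$ is injective. It is injective on vertices because levels within a fibre are distinct. The key observation for the rest is that if distinct edges $e,e'$ both pass through a vertex $v$ of $Y$, their levels at $v$ coincide only when $e$ and $e'$ are both incident at $v$ to a common vertex $u\in V\Gamma$ with $\psi_0(u)=v$ --- because at $v$ the values $\ell_v(\cdot)$ and the levels of the interior edges are pairwise distinct and $\ell_v$ is injective on its domain. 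A short case analysis then yields: over any common edge of $Y$, distinct edges of $\Gamma$ receive distinct lifted edges (if the levels agree at one endpoint then $e,e'$ share a $\Gamma$-vertex there, so, $\Gamma$ being simple, their other endpoints differ and hence so do their levels at the other endpoint); distinct collapsed edges receive distinct clique edges; and no vertex image is an interior point of an edge of $K_T(Y)$. Together these show that $\psi'$ is injective and that images of edges meet only at images of shared $\Gamma$-vertices, which completes the proof. I expect the main obstacle to be this bookkeeping --- in particular, keeping ``the level of $e$ at $v$'' well defined and handling the vertices of $\Gamma$ whose many incident edges all leave the common image vertex at the same level.
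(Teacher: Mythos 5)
Your proof is correct and follows the same broad strategy as the paper's --- lift the wiring to the thickening $K_T(Y)$ by assigning levels --- but the execution differs usefully. The paper keeps the original walks $\psi(e)$, totally orders all occurrences of $Y$-vertices along them, and lifts each occurrence of a vertex $v$ to level $n(v)+k+1$ where $n(v)$ counts earlier occurrences, taking $T=k(d+1)$ on the strength of the claim that ``each vertex in $Y$ lies in the interior of at most $kd$ of the $\psi(e)$.'' You instead first straighten each walk to a simple path and then assign levels locally at each $Y$-vertex $v$ (distinct levels to the $\leq k$ preimage vertices and to the $\leq\frac12\Delta_Y k$ through-going edges), getting $T=k+\lceil\frac12\Delta_Y k\rceil$. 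Your two deviations are not cosmetic. The straightening step is genuinely needed: a walk may revisit a vertex arbitrarily many times, and without this reduction the paper's multiplicity count is not bounded in terms of $d$ and $k$ at all. And even for simple paths the through-traffic at $v$ is controlled by $\Delta_Y$, not by $d$ and $k$ alone (a coarse $1$-wiring of a perfect matching into a star can route arbitrarily many disjoint two-edge paths through the hub), so the Lemma's phrasing ``for all $d,k$ there exists $T$'' really ought to allow $T$ to depend on $Y$ as yours does --- harmless for the one application, where $Y=\calG_x^L$ is fixed, and you flag this yourself. Your injectivity argument (the key observation that levels at $v$ collide only for edges sharing a $\Gamma$-vertex mapping to $v$, followed by the case split over a common $Y$-edge and the separate clique-edge case for collapsed edges) is complete and arguably cleaner than the paper's global-ordering bookkeeping.
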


\begin{proof} Set $T=k(d+1)$. For each vertex $v\in Y$ enumerate $\psi^{-1}(v)=\set{v_1,\ldots, v_{m}}$ for some $m\leq k$. Define $\psi'(v_l)=(\psi(v),l)$. We now define $\psi'$ on the edges of $\Gamma$. Whenever $vw$ is an edge in $\Gamma$ with $\psi(v)=\psi(w)$ then there exist $l\neq l'$ such that $v=v_l$ and $w=v_{l'}$ and we map the corresponding interval in $\Gamma$ with endpoints labelled $v$ and $w$ isometrically onto the interval with endpoints labelled $(\psi(v),l)$ and $(\psi(v),l')$ in $K_T(Y)$.

Define $E'$ to be the set of edges in $\Gamma$ whose end vertices are distinct after applying $\psi$. Enumerate the edges of $E'$ as $e_1=v_1w_1,\ldots,e_n=v_nw_n$ and, for each $j$, enumerate (in order, including non-consecutive repetitions) the vertices contained in the image of the interval corresponding to $v_jw_j$ under $\psi$ as $\psi(v_j)=v_j^0,\ldots,v_j^{n_j}=\psi(w_j)$. Each $v_j^iv_j^{i+1}$ is an edge in $Y$.

Completely order the $v^i_j$ so that $v^i_j<v^{i'}_{j'}$ whenever $j<j'$ or $j=j'$ and $i<i'$. Denote by $n(v^i_j)$ the number of $v^{i'}_{j'}<v^i_j$ such that $v^{i'}_{j'}$ and $v^i_j$ are the same vertex of $Y$.  We extend $\psi'$ by mapping the edge $e_j$ continuously and injectively onto the path $(v_j^0,n(v_j^0)+k+1),(v_j^1,n(v_j^1)+k+1),\ldots, (v_j^{n_j},n(v_j^{n_j})+k+1))$. It is immediate from the construction that $\psi'$ is an injective wiring. It remains to find a uniform bound on $n(v_j^{i})$.

Since $\psi$ is a coarse $k$-wiring, each vertex in $Y$ lies in the interior of at most $kd$ of the $\psi(e)$, so $n(v_j^{i})\leq kd-1$ for all $i,j$. Thus $\psi'$ as above is well-defined since $T= k(d+1)$.

Note that if $(x,j),(y,j')$ are contained in $\im(\psi')$ then $(x,1),(y,1)\in\im(\psi')$ and there is a path of length at most $\diam(\psi)$ connecting $(x,1)$ to $(y,1)$ in $K_T(Y)$. Hence $\diam(\psi')\leq\diam(\psi)+2$. If $(x,j)\in \im(\psi')$ for some $j$ then $x\in\im(\psi)$. Therefore $\vol(\psi')\leq T\vol(\psi)$.
\end{proof}

The next step is to find a thick embedding of $K_T(Y)$ into $\widetilde{M}$.

\begin{lemma}\label{lemma:equivariantembedding}
    Suppose that $M$ is a compact manifold of dimension $n\geq 3$ with fundamental group $G$ and let $\widetilde M$ be the universal cover of $M$. Let $x \in M$ and denote by $Gx$
    the orbit of $x$ in $\widetilde M$ under $G$. Then for any $L,T$ there is an embedding of
    $Y'=K_T(\mathcal{G}^L_X(Gx))$ into $\widetilde M$ that is equivariant with respect to the
    action of $G$ on $\mathcal M$ by deck transformations. 

    This embedding is $\varepsilon$-thick for some $\varepsilon>0$, and there is a uniform upper bound on the length of the paths obtained as the images of edges of $Y'$ under the embedding.
\end{lemma}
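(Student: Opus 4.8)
The plan is to build the embedding $G$-equivariantly by first embedding a fundamental domain's worth of vertices and then propagating by deck transformations, using compactness of $M$ to extract the uniform thickness constant $\varepsilon$ at the very end. First I would choose a point $\tilde x\in\widetilde M$ lying over $x$. Since $G=\pi_1(M)$ acts freely, properly discontinuously and cocompactly on $\widetilde M$, the orbit $G\tilde x$ is a discrete set, and there is a $\rho>0$ such that distinct points of $G\tilde x$ are at distance $\geq 2\rho$. For each vertex $v=g\tilde x$ of $\mathcal G^L_x(Gx)$ and each $i\in\{1,\dots,T\}$, I would place the vertex $(v,i)$ of $Y'$ at a point $p_{v,i}\in\widetilde M$ chosen inside the ball $B_{\rho/4}(g\tilde x)$, equivariantly: fix the $T$ points $p_{\tilde x,1},\dots,p_{\tilde x,T}$ freely (in general position) in $B_{\rho/4}(\tilde x)$, and set $p_{g\tilde x,i}=g\cdot p_{\tilde x,i}$. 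Because $\dim\widetilde M=n\geq 3$ and the $p_{\tilde x,i}$ are in general position, these points are distinct and no three are collinear in the relevant local sense; distinctness across different orbit points is automatic from the $2\rho$-separation.

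Next I would embed the edges. The graph $\mathcal G^L_x(Gx)$ has bounded degree (by cocompactness there are only finitely many $G$-orbits of edges, in fact finitely many $g\in G$ with $d_{\widetilde M}(\tilde x,g\tilde x)\leq L$), so $Y'=K_T(\mathcal G^L_x(Gx))$ also has bounded degree, and there are finitely many $G$-orbits of edges of $Y'$. For each orbit representative $\{(v,i),(w,j)\}$ I must choose an embedded path (arc) in $\widetilde M$ from $p_{v,i}$ to $p_{w,j}$; then translate it by $G$ to obtain the images of all edges in that orbit. The key point is that these finitely many chosen arcs, together with all their $G$-translates, must be pairwise disjoint except where they share an endpoint. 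This is where I would invoke the hypothesis $n\geq 3$: a generic smooth arc between two points in an $n$-manifold with $n\geq 3$ avoids any fixed lower-dimensional set, and the union of all $G$-translates of a finite collection of compact arcs is locally finite (again by proper discontinuity). So I would choose the arcs one orbit at a time, at each stage perturbing the new arc (rel endpoints, keeping it inside, say, the $2L\lambda$-neighbourhood of the geodesic from $p_{v,i}$ to $p_{w,j}$, where $\lambda$ bounds $d_{\widetilde M}(\tilde x,g\tilde x)$ over the finitely many generators involved) so that it is embedded, meets the vertex set only at its endpoints, and is disjoint from all previously chosen arcs \emph{and all their $G$-translates} — possible by general position since the obstruction set is a locally finite union of $1$-complexes, hence nowhere dense and of codimension $\geq 2$. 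Equivariance of the whole edge-map then follows by construction: an arbitrary edge is $g$ times an orbit representative, and we declare its image to be $g$ times the chosen arc; consistency holds because $G$ acts freely on edges of $Y'$.

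Finally I would extract $\varepsilon$ and the length bound. All edge-images lie in bounded neighbourhoods of geodesics between orbit points at bounded distance, so each has length $\leq \Lambda$ for a uniform $\Lambda$ (the maximum over the finitely many orbit representatives of the chosen arc lengths); this gives the uniform length bound. For thickness: the quantity $\varepsilon_0=\min d_{\widetilde M}(A,B)$, taken over all pairs $A,B$ among the finitely many chosen arcs and vertex-points that are required to be disjoint (i.e.\ not sharing an endpoint), is positive since it is a minimum of finitely many positive distances between compact sets. Any two disjoint cells of $Y'$ in $\widetilde M$ are $G$-translates of such a pair — more precisely, if cells $c_1,c_2$ of $Y'$ have disjoint images, then $g^{-1}c_1$ and $g^{-1}c_2$ lie within bounded distance of $\tilde x$ for suitable $g$ (using the $L$- and $T$-bounds and cocompactness, there is a finite set of $G$-orbits of \emph{pairs} of cells at bounded combinatorial distance), and outside a bounded combinatorial distance the images are automatically far apart by the $2\rho$-separation of orbit points and the length bound $\Lambda$. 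Taking $\varepsilon$ to be the minimum of $\varepsilon_0$ and this "far apart" constant, and using $G$-invariance of $d_{\widetilde M}$, shows the embedding is $\varepsilon$-thick. The main obstacle is the simultaneous disjointness of infinitely many arcs: the resolution is that proper discontinuity makes the relevant obstruction sets locally finite, so the finite sequence of general-position perturbations (one per edge-orbit) suffices, and $n\geq 3$ guarantees there is room to perturb. $\qed$
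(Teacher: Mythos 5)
Your construction proceeds entirely upstairs in $\widetilde M$, choosing one arc per $G$-orbit of edges of $Y'$ and then propagating by deck transformations, whereas the paper works downstairs: it fixes an embedding $f$ of $K_T$ into a small ball in the \emph{compact} manifold $M$, adds one arc $\gamma_{a,b,[\ell]}$ per homotopy class of length $\leq L$ by a general-position argument in $M$, and then \emph{lifts} the resulting embedded $1$-complex to $\widetilde M$. This difference matters, and it exposes a genuine gap in your argument. When you choose the arc $\alpha$ for an orbit representative, you explicitly require $\alpha$ to avoid ``all previously chosen arcs and all their $G$-translates'', but you never require $\alpha$ to avoid its \emph{own} translates $g\alpha$ (for the finitely many $g\neq 1$ that can bring $g\alpha$ near $\alpha$). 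Without this, the $G$-equivariant extension need not be injective, and your $\varepsilon_0$ need not be positive: a pair $(\alpha,g\alpha)$ at bounded combinatorial distance is one of the finitely many orbit representatives you minimize over, and if $\alpha\cap g\alpha\neq\emptyset$ that minimum is $0$. Achieving $\alpha\cap g\alpha=\emptyset$ is a self-avoidance condition; it is not an instance of putting a fixed arc into general position with respect to a fixed $1$-complex, because the obstruction set $g\alpha$ moves when you perturb $\alpha$. It can be made to work with a parametrized transversality argument (the relevant map $[0,1]^2\to\widetilde M\times\widetilde M$, $(s,t)\mapsto(\alpha(s),g\alpha(t))$, should miss the codimension-$n$ diagonal, and $n\geq 3>2$), but that is exactly the extra step you would need to supply.

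The paper's device of working in $M$ makes this issue vanish: an \emph{embedded} arc in $M$ lifts to a disjoint union of arcs in $\widetilde M$ (two lifts meeting at a point would force the arc to pass twice through the same point of $M$), so disjointness from one's own translates is automatic, and all that remains is ordinary general position among finitely many arcs in a single compact chart. If you want to keep your ``build upstairs'' formulation, the cleanest repair is to impose the additional requirement that the projection $\pi\circ\alpha$ of each chosen arc to $M$ be an embedded arc meeting the previously projected arcs only at shared endpoints --- at which point you have essentially reconstructed the paper's proof. Your remaining ingredients (finitely many orbits of pairs of cells at bounded combinatorial distance, the $2\rho$-separation for distant cells, and the length bound $\Lambda$ from finitely many edge-orbits) are fine and parallel the paper's use of compactness and $1$-Lipschitzness of the covering map.
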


\begin{proof}
	Let $B$ be a ball in $M$ centred at $x$ which is homeomorphic to $\R^n$. Fix a topological embedding $f$ of the complete graph on $T$ vertices into $B$. Enumerate the vertices $\{w_1,\ldots,w_T\}$. For each pair $w_a,w_b\in VK_T$, and each homotopy class $[\ell]$ in $\pi_1(M,x)$ which has a representative of length at most $L$, choose an arc $\gamma_{a,b,[\ell]}$ connecting $f(w_a)$ to $f(w_b)$ such that the loop obtained from concatenating $f(w_aw_b)$ and $\gamma_{a,b,[\ell]}$ is in $[\ell]$ and such that $\gamma_{a,b,[\ell]}$ intersects the union of $f(K_T)$ and all arcs previously added only at the points $f(y)$ and $f(z)$. This is always possible using a general position argument.
	
	Lifting this embedding to $\overline M$, we obtain a $G$-equivariant embedding of $K_T(\mathcal{G}^L_X(Gx))$ into $\widetilde M$. Specifically, the interval with end points $(gx,a)$ and $(gx,b)$ is mapped to $gf(w_aw_b)$ and if $(gx,a)(g'x,b)$ is an edge in $Y'$ then by definition the homotopy class corresponding to $g=[\ell]$ has a representative of length at most $L$. Thus, the image of this edge in $\widetilde{M}$ is the lift of $\gamma_{a,b,[\ell]}$ starting at $(gx,a)$ and ending at $(g'x,b)$. 
	As the natural covering map $\widetilde{M}\to M$ is $1$-Lipschitz, this topological embedding is $\varepsilon$-thick, where $\varepsilon=\min\set{d_M(X,Y)}$ as $X,Y$ range over the following:
	\begin{itemize}
	 \item $X=\{f(v)\}$, $Y=\{f(w)\}$ for distinct vertices $v,w\in VK_T$; or
	 \item $X=\{f(v)\}$ and $Y$ is either $f(yz)$ or $\gamma_{y,z,[\ell]}$ with $v,y,z$ all distinct; or
	 \item $X$ is either $f(vw)$ or $\gamma_{v,w,[\ell]}$ and $Y$ is either $f(yz)$ or $\gamma_{y,z,[\ell']}$ with $v,w,y,z$ all distinct.
	\end{itemize}
	Similarly, since there are only finitely many $G$-orbits of images of edges, there is a uniform upper bound on the lengths of images of edges.
\end{proof}

We are now ready to prove Theorem \ref{thm:thickemb}.

\begin{proof}[Proof of Theorem \ref{thm:thickemb}]
Let $M$ be a compact manifold of dimension $n\geq 3$, let $\widetilde M$ be the universal cover of $M$ and let $Y$ be any graph quasi-isometric to $\widetilde M$. Fix $d,k\in\N$ and assume that there is a coarse $k$-wiring of $\Gamma$ into $Y$ with diameter $D$ and volume $V$. We may assume $D\geq 1$ as the $D=0$ case is obvious.

By Lemma \ref{lem:MilnorSvarc} there is some $L$ such that $\calG_x^L$ is quasi-isometric to $\widetilde M$, so by Corollary \ref{cor:wirreg}(\ref{eq:cwreg1}), there exists some $l=l(k,d)$ so that there is a coarse $l$-wiring of $\Gamma$ into $\calG_x^L$ with diameter $\leq lD+l\leq 2lD$ and volume $\leq lV+l\leq 2lV$.

Now we apply Lemma \ref{lem:injwiring}: for some $T=T(l,d)$ there is an injective wiring $\psi$ of $\Gamma$ into $K_{T}(\calG_x^L)$ with diameter $\leq 2lD+2\leq 4lD$ and volume $\leq 2TlV$. Composing this injective wiring with the $\varepsilon$-thick topological embedding $\phi$ of $K_{T}(\calG_x^L)$ into $\widetilde M$ gives an $\varepsilon$-thick embedding $f:\Gamma\to \widetilde{M}$.
The diameter of the image of $f$ is bounded from above by a constant multiple of $\diam(\psi)$. For the volume, note that the sum of the lengths of all paths in the wiring is at most a constant times $\vol(\psi)$, and the volume of the thick embedding is at most this sum of lengths multiplied by the maximal volume of a ball of radius $\varepsilon$ in $M$. Hence the volume of this thick embedding is at most a constant multiple of $V$.
\end{proof}

\section{Lower bounds on coarse wiring}
The goal of this section is to prove Theorem \ref{thm:wirsep}. We begin by recalling the definition of the separation profile and its key properties.

\subsection{Background on the separation profile}

Recall that $f\lesssim g$ if there is a constant $C$ such that
\[
f(x)\leq Cg(Cx)+C \quad \textup{for all}\ x\in X.
\]
We write $f\simeq g$ if $f\lesssim g$ and $g\lesssim f$.

\begin{definition}\cite{BenSchTim} Let $\Gamma$ be a finite graph. We denote by $\cut(\Gamma)$ the minimal cardinality of a set $S\subset V\Gamma$ such that no component of $\Gamma-S$ contains more than $\frac12|V\Gamma|$ vertices. A set $S$ satisfying this property is called a \textbf{cut set} of $\Gamma$.

Let $X$ be a (possibly infinite) graph. We define the \textbf{separation profile} of $X$ to be the function $\sep_X:[0,\infty)\to[0,\infty)$ given by
\[
\sep_X(n)=\max\{\cut(\Gamma)\mid \Gamma\leq X,\ |V\Gamma|\leq n\}.
\]
For convenience, we will define $\sep_X(r)=0$ whenever $r<1$.
\end{definition}

\begin{definition}\label{defn:Cheeger} The Cheeger constant of a finite graph $\Gamma$ is
\[
h(\Gamma)=\min\{\frac{|\partial A|}{|A|} \mid A\subseteq V\Gamma,\ |A|\leq\frac12|V\Gamma|\}
\]
where $\partial A = \{v\in V\Gamma\mid d_\Gamma(v,A)=1\}$.
\end{definition}

\subsection{Lower bounds on wiring profiles}

The key part of the proof of Theorem \ref{thm:wirsep} is the following intimidating bound.

\begin{proposition}\label{prop:sepLB} Let $X,Y$ be graphs with maximal degrees $\Delta_X,\Delta_Y$ respectively. If $\wir^k_{X\to Y}(n)<\infty$, then, for all $n\geq 3$,
\[
 \sum_{s\geq 0}\sep_Y(2^{-s}\wir^k_{X\to Y}(n)) \geq \frac{\sep_X(n)}{k\Delta_Y}.
\]
\end{proposition}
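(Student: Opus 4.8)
The plan is to extract, from a coarse $k$-wiring $f:\Gamma\to Y$ of minimal volume $V=\wir^k_{X\to Y}(n)$ applied to a subgraph $\Gamma\leq X$ realising $\cut(\Gamma)=\sep_X(n)$, a cut set of $\Gamma$ whose size is controlled by separating the image. The core idea is a dyadic/recursive decomposition of $\im(f)$: one repeatedly cuts the image graph roughly in half, and at each stage a small cut set in $Y$ pulls back to a set of vertices in $\Gamma$ that, together with the wiring structure, disconnects $\Gamma$ into pieces. Since $f$ has bounded multiplicity ($\leq k$ on vertices, $\leq k$ on edges), the preimage of a set of $t$ vertices in $Y$ contains at most $kt$ vertices of $\Gamma$, and — crucially — an edge $e$ of $\Gamma$ whose entire image-walk avoids a set $S\subset VY$ must lie in a single complementary component of $\im(f)\setminus S$. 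So cutting the image also cuts $\Gamma$ (modulo the bounded-multiplicity loss).

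Concretely, I would argue as follows. Set $W=\im(f)$, a subgraph of $Y$ with $|VW|\leq V$ and maximal degree $\leq\Delta_Y$. Build a binary "separation tree": level $0$ is $W$ itself; given a piece $W'$ at level $s$ with $|VW'|\leq 2^{-s}V$, choose a cut set $S'\subset VW'$ with $|S'|\leq\cut(W')\leq\sep_Y(|VW'|)\leq\sep_Y(2^{-s}V)$ so that each component of $W'\setminus S'$ has at most $\tfrac12|VW'|$ vertices; recurse on each component (there may be several, but their sizes are halved so after at most $\log_2 V$ levels everything is a single vertex). Let $\widehat S=\bigcup_s\bigcup_{\text{pieces at level }s}S'$ be the union of all cut vertices used; then $\sum_{v\in\widehat S}$ (counted with multiplicity across levels) is at most $\sum_{s\geq 0}(\#\text{pieces at level }s)\cdot\sep_Y(2^{-s}V)$. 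The standard fact about recursive halving is that the pieces at a fixed level $s$ are disjoint subgraphs of $W$, so there are at most... here is the subtlety: the number of pieces at level $s$ can be large. This is why the factor $\Delta_Y$ and the precise bound in the statement appear — one must bound $|\widehat S|$ more carefully, using that at each level the cut sets are disjoint from one another across the different pieces (they live in disjoint vertex sets), so $\sum_{\text{pieces at level }s}|S'|\leq\sum_{\text{pieces at level }s}\sep_Y(2^{-s}V)$, and one controls the total number of pieces summed over all levels by a telescoping/handshake argument giving the $k\Delta_Y$ denominator.

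The payoff step: once $\widehat S\subset VW\subset VY$ is assembled with $|\widehat S|\leq$ (roughly) $\sum_{s\geq0}\sep_Y(2^{-s}V)\cdot(\text{number of level-}s\text{ pieces})$, I claim $f^{-1}(\widehat S)$ (enlarged slightly to include endpoints of any edge of $\Gamma$ whose image-walk meets $\widehat S$) is essentially a cut set of $\Gamma$: any component of $\Gamma$ after removing it maps into a single piece of the final decomposition of $W$, hence into a single vertex of $Y$, hence (by $k$-to-$1$ on vertices) contains at most $k$ vertices — in particular at most $\tfrac12 n$. The size of this cut set is at most $k|\widehat S|+ (\text{edges of }\Gamma\text{ hitting }\widehat S)\leq k|\widehat S|+k\Delta_Y|\widehat S|$, using that each vertex of $Y$ lies on the image-walks of at most $k\Delta_Y$ edges of $\Gamma$ (each vertex of $\Gamma$ has degree $\leq\Delta_X$, but more directly: an edge of $Y$ lies in $\leq k$ image-walks and each vertex of $W$ meets $\leq\Delta_Y$ edges of $W$). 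Therefore $\sep_X(n)=\cut(\Gamma)\leq k\Delta_Y|\widehat S|\leq k\Delta_Y\sum_{s\geq0}\sep_Y(2^{-s}V)$, which rearranges to the claimed inequality.

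The main obstacle I anticipate is the bookkeeping in the recursive decomposition: one needs the pieces at each fixed level to be honestly disjoint (so their cut sets sum correctly against $\sep_Y(2^{-s}V)$) while simultaneously ensuring that removing the union of all cut vertices across all $O(\log V)$ levels really does leave only tiny ($\leq k$-vertex, or at worst $\leq\tfrac12 n$-vertex) components in $\Gamma$ — and getting the constants to land exactly on $k\Delta_Y$ rather than something with an extra logarithm or an extra $\Delta_X$. The cleanest route is probably to run the halving until every piece has $\leq\tfrac{n}{2k}$ vertices of $W$ (not down to singletons), so that its $f$-preimage has $\leq\tfrac n2$ vertices of $\Gamma$ automatically; then the number of levels and pieces is controlled and the edge-counting against $\Delta_Y$ is transparent. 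I would also double-check the $n\geq3$ hypothesis is exactly what is needed for $\sep_X(n)=\cut(\Gamma)$ to be meaningful (so that "$\leq\tfrac12|V\Gamma|$" is a genuine constraint) and for the geometric series $\sum 2^{-s}$ to interact correctly with $\sep_Y$ being nonzero only for arguments $\geq1$.
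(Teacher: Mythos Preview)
Your framework is correct --- wire a witness $\Gamma$ for $\sep_X(n)$ into $Y$, recursively cut the image $W=\im(f)$, and pull the accumulated cut set back to $\Gamma$ via the $k\Delta_Y$ multiplicity bound. The pullback step and the final inequality are exactly as in the paper. But there is a genuine gap in the decomposition step, and you have correctly located it: with a full binary separation tree, the number of pieces at level $s$ can grow like $2^s$, so $\sum_{\text{pieces at level }s}\sep_Y(2^{-s}V)$ is \emph{not} bounded by $\sep_Y(2^{-s}V)$, and no ``telescoping/handshake argument'' will rescue this without an extra subadditivity hypothesis on $\sep_Y$ (false in general) or an unwanted logarithmic loss. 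Your proposed stopping rule does not help: even stopping once pieces have $\leq n/(2k)$ vertices, the total cut across the levels can still be of order $\sum_s 2^s\sep_Y(2^{-s}V)$.

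The missing idea, and what the paper actually does, is that you do not need a tree at all --- a single \emph{path} of cuts suffices. After removing a minimal cut set $C'_s$ from the current piece $\Gamma'_s$, at most one component $A$ can satisfy $|\{v\in V\Gamma:\psi(v)\in A\}|>\tfrac12|V\Gamma|$, because preimage counts over disjoint components sum to at most $|V\Gamma|$. If no component is heavy, stop; otherwise let $\Gamma'_{s+1}$ be that unique heavy component and continue. Since $C'_s$ is a cut set, $|\Gamma'_{s+1}|\leq\tfrac12|\Gamma'_s|\leq 2^{-(s+1)}V$, there is exactly one piece per level, and hence
\[
|C'|=\sum_{s\geq 0}|C'_s|\leq\sum_{s\geq 0}\sep_Y\bigl(2^{-s}V\bigr)
\]
with no multiplicity issue. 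When the process terminates, every component of $W\setminus C'$ carries at most $\tfrac12|V\Gamma|$ vertex-preimages, so your pullback $C=\{v\in V\Gamma:\text{some edge at }v\text{ has image-walk meeting }C'\}$ is a genuine cut set of $\Gamma$, and $|C|\leq k\Delta_Y|C'|$ exactly as you argued.
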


Roughly, the idea is that given a subgraph $\Gamma\leq X$ and an ``efficient'' coarse $k$-wiring $\Gamma\to\Gamma'$, $\cut(\Gamma')$ can be bounded from above by $\cut(\Gamma)$ up to a multiplicative error depending only on $k$. However, we do not know that any cut of this size equally divides the images of the vertices of $\Gamma$ in $\Gamma'$ so we may need to repeat the procedure on a subgraph of $\Gamma'$ with at most $|\Gamma'|/2$ vertices and then again on a subgraph of $\Gamma'$ with at most $|\Gamma'|/4$ vertices and so on. This divide-and-conquer strategy is the reason for the summation on the left hand side of the equation above. 

\begin{proof} Let $n\geq 3$ and choose $\Gamma\leq X$ which satisfies $\abs{\Gamma}\leq n$ and $\cut(\Gamma)=\sep_X(n)=l$. Since $n\geq 3$, it is always the case that $l\leq 2\abs{\Gamma}/3$. By \cite[Proposition 2.4]{HumSepExp} there is some $\Gamma''\leq \Gamma$ which satisfies $\abs{\Gamma''}\geq\frac12\abs{\Gamma}$ and $h(\Gamma'')\geq \frac{l}{2\abs{\Gamma}}$. Let $\psi:\Gamma''\to\Gamma'$ be a coarse $k$-wiring where $\Gamma'\leq Y$ satisfies $|\Gamma'|=wir^k_{Y}(\Gamma'')$.

Let us recursively define a collection of subsets $C'_1,C'_2,\ldots, \subseteq V\Gamma'$ as follows.
Define $\Gamma'_0=\Gamma$. Let $C'_s$ be a cut set of $\Gamma'_{s}$ of minimal size. If for every component $A$ of $\Gamma'_{s}-C'_s$, we have $|\setcon{v\in V\Gamma}{\psi(v)\in A}|\leq\frac12|\Gamma$ then define $C'_t=\emptyset$ for all $t>s$ and end the process. Otherwise, set $\Gamma_{s+1}$ to be the unique connected component of $\Gamma'_{s}-C'_s$ satisfying $|\setcon{v\in V\Gamma}{\psi(v)\in A}|>\frac12|\Gamma$. As $|\Gamma|<\infty$ this process will always terminate. Define $C'=\bigcup_{s\geq 0} C'_s$. By construction, for every connected component $A$ of $\Gamma'\setminus C'$, we have $|\setcon{v\in V\Gamma}{\psi(v)\in A}|\leq\frac12|\Gamma$. By definition of cut sets, $|\Gamma'_s|\leq 2^{-s}|\Gamma'|$, so $|C'_s|\leq \sep_Y(2^{-s}|\Gamma'|)=\sep_Y(2^{-s}\wir^k_{X\to Y}(n))$.

Let $C$ be the set of all vertices in $\Gamma$ which are the end vertices of an edge whose image under $\psi$ contains any vertex in $C'$. By construction $C$ is a cutset for $\Gamma$. Since $\psi$ is a $k$-coarse wiring, each edge in $\Gamma'$ lies in the image of at most $k$ edges in $\Gamma$, so each vertex in $\Gamma'$ lies in the image of at most $k\Delta_Y$ edges in $\Gamma$, where $\Delta_Y$ is the maximal degree of the graph $Y$. Thus $|C|\leq k\Delta_Y|C'|$.

Combining these observations we see that
\[
 \cut(\Gamma) \leq |C| \leq k\Delta_Y|C'| \leq k\Delta_Y\sum_{s\geq 0} \sep_Y(2^{-s}\wir^k_{X\to Y}(n))
\]
As this holds for all $\Gamma\leq X$ with $\abs{\Gamma}\leq n$, we deduce that
\[
 \sep_X(n) \leq k\Delta_Y\sum_{s\geq 0} \sep_Y(2^{-s}\wir^k_{X\to Y}(n)).\qedhere
\]
\end{proof}

In practice, the separation profiles of graphs we are interested in here are of the form $n^r\ln(n)^s$ with $r\geq 0$ and $s\in\R$. Restricted to these functions, Proposition \ref{prop:sepLB} says the following.

\begin{corollary}\label{cor:sepLB} Suppose $\sep_X(n)\gtrsim n^r\ln(n)^s$ and $\sep_Y(n)\simeq n^p\ln(n)^q$. Then, for all $k$,
\[
wir^k_{X\to Y}(n)\gtrsim \left\{ 
\begin{array}{lll} 
 n^{r/p}\ln(n)^{(s-q)/p} & \textup{if} & p>0, \\
 \exp(n^{r/(q+1)}\ln(n)^{s/(q+1)}) & \textup{if} & p=0.
 \end{array}\right.
 \]
\end{corollary}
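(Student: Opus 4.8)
The plan is to deduce Corollary~\ref{cor:sepLB} from Proposition~\ref{prop:sepLB} by carefully estimating the geometric-type sum $\sum_{s\geq 0}\sep_Y(2^{-s}m)$ when $\sep_Y(n)\simeq n^p\ln(n)^q$, where $m=\wir^k_{X\to Y}(n)$. The key observation is that the summand behaves like $2^{-sp}m^p(\ln m - s\ln 2)^q$ and is only a genuine contribution when $2^{-s}m\gtrsim 1$, i.e.\ for $s\lesssim \log_2 m$; by convention $\sep_Y(r)=0$ for $r<1$, so the sum is effectively finite. First I would treat the case $p>0$: here the geometric factor $2^{-sp}$ dominates, the logarithmic factor $(\ln m - s\ln 2)^q$ is at most a constant times $(\ln m)^{\max(q,0)}$ uniformly in the relevant range (and for negative $q$ one argues slightly more carefully near the top of the range, but the tail where the log factor could misbehave is killed by the geometric decay), so $\sum_{s\geq 0}\sep_Y(2^{-s}m)\lesssim m^p\ln(m)^q$ with an implied constant depending only on $p,q$. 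Plugging this into Proposition~\ref{prop:sepLB} gives $m^p\ln(m)^q \gtrsim \sep_X(n)\gtrsim n^r\ln(n)^s$, and then inverting the function $m\mapsto m^p\ln(m)^q$ yields $m\gtrsim n^{r/p}\ln(n)^{(s-q)/p}$, which is exactly the claimed bound. The inversion is routine: if $m^p(\ln m)^q\gtrsim n^r(\ln n)^s$ then taking logarithms gives $\ln m \gtrsim \frac{r}{p}\ln n$ (the log-of-log terms are lower order), and substituting this back controls the $(\ln m)^q$ factor in terms of $(\ln n)^q$, giving the result.

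For the case $p=0$ we have $\sep_Y(n)\simeq \ln(n)^q$, so the summand is $\simeq (\ln m - s\ln 2)^q$ for $0\leq s\lesssim \log_2 m$ and $0$ afterwards. Summing, $\sum_{s\geq 0}\sep_Y(2^{-s}m)\simeq \sum_{s=0}^{\lfloor\log_2 m\rfloor}(\ln m - s\ln 2)^q$, which is comparable to an integral $\int_0^{\log_2 m}(\ln m - t\ln 2)^q\,dt \simeq (\ln m)^{q+1}$. Hence Proposition~\ref{prop:sepLB} gives $(\ln m)^{q+1}\gtrsim n^r\ln(n)^s$, and raising both sides to the power $1/(q+1)$ then exponentiating yields $m\gtrsim \exp\!\big(n^{r/(q+1)}\ln(n)^{s/(q+1)}\big)$, as claimed. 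One should also note that the case $\wir^k_{X\to Y}(n)=\infty$ is vacuous for the stated lower bound, so we may assume $m<\infty$ throughout and apply Proposition~\ref{prop:sepLB}.

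The main obstacle — though it is more bookkeeping than a conceptual difficulty — is handling the logarithmic factors uniformly across the summation range, especially when $q<0$, so that $\ln(n)^q$ and $\ln(m)^q$ are decreasing and one must be careful that the dominant contributions to the sum come from small $s$ (where the log factor is comparable to $(\ln m)^q$) rather than from $s$ near $\log_2 m$ (where $2^{-s}m$ is small and $\sep_Y$ vanishes anyway, but the formula $n^p\ln(n)^q$ would blow up or vanish delicately). Keeping track of the $\simeq$ versus $\lesssim$ constants through the inversion step, and checking that the ``$+C$'' and ``$C\cdot$'' slack built into the relation $\lesssim$ absorbs all the error terms (in particular the $\ln(\ln n)$ terms that appear when inverting), is the part that requires the most care. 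I would organise this by first proving a clean lemma that for $a>0$ and $b\in\R$ the equation $\sum_{s\geq 0}\sep_Y(2^{-s}m)\simeq m^p\ln(m)^q$ (for $p>0$) respectively $\simeq \ln(m)^{q+1}$ (for $p=0$) holds with constants depending only on $\sep_Y$, and then a second routine lemma on inverting monomial-times-log functions up to $\simeq$, after which the corollary is immediate.
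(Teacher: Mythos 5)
Your proposal is correct and follows essentially the same route as the paper's proof: apply Proposition~\ref{prop:sepLB}, bound the geometric-type sum by its dominant term ($m^p\ln(m)^q$ for $p>0$, $\ln(m)^{q+1}$ for $p=0$), and then invert to get the stated lower bound on $\wir^k_{X\to Y}(n)$. The only stylistic difference is in the inversion: the paper argues by contradiction (assume $\wir^k_{X\to Y}(n)\leq dn^{r/p}\ln(n)^{(s-q)/p}$, compute, and derive a contradiction for $d$ small), whereas you describe it as a direct "take logarithms and substitute back" inversion; these are equivalent. Both you and the paper are slightly informal about the summation estimate when $q<0$ (where $\ln(2^{-s}m)^q$ grows near the tail of the range), but as you correctly observe, the exponential geometric decay of $(2^{-s}m)^p$ absorbs this, and either can be tightened by splitting the sum.
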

\begin{proof}
If $wir^k_{X\to Y}(n)=+\infty$ there is nothing to prove, so assume this is not the case. Applying our hypotheses to Proposition \ref{prop:sepLB}, we have 
\begin{equation}\label{eq:sepwirbd}
 n^r\ln(n)^s \lesssim \sum_{i\geq 0} \left(2^{-i}\wir^k_{X\to Y}(n)\right)^p\ln\left(2^{-i}\wir^k_{X\to Y}(n)\right)^q.
\end{equation}
If $p>0$, then the sequence $\left(2^{-i}\wir^k_{X\to Y}(n)\right)^p$ decays exponentially as a function of $i$, so
\begin{eqnarray*}
 n^r\ln(n)^s & \lesssim & \ln\left(\wir^k_{X\to Y}(n)\right)^q\sum_{i\geq 0} \left(2^{-i}\wir^k_{X\to Y}(n)\right)^p \\ & \lesssim & \wir^k_{X\to Y}(n)^p\ln\left(\wir^k_{X\to Y}(n)\right)^q.
\end{eqnarray*}
Hence, there is some constant $C>0$ such that 
\begin{equation}\label{eq:wir}
w:=\wir^k_{X\to Y}(n)^p\ln\left(\wir^k_{X\to Y}(n)\right)^q \geq C^{-1}(C^{-1}n)^r\ln(C^{-1}n)^s-C.
\end{equation}
Now suppose $\wir^k_{X\to Y}(n)\leq dn^{r/p}\ln(n)^{(s-q)/p}$. Then
\begin{eqnarray*}
w & \leq & d^pn^r\ln(n)^{s-q}\left(\ln(d)+\frac{r}{p}\ln(n) + \frac{s-q}{p}\ln\ln(n)\right)^q \\
& \leq &
\frac{(2r)^sd^p}{p^s}n^r\ln(n)^{s-q}\ln(n)^q = \frac{(2r)^sd^p}{p^s}n^r\ln(n)^s
\end{eqnarray*}
for sufficiently large $n$. This contradicts $\eqref{eq:wir}$ if $d$ is small enough and $n$ is large enough. Hence,
\[
 \wir^k_{X\to Y}(n) \gtrsim n^{r/p}\ln(n)^{(s-q)/p}.
\]
If $p=0$, then by $\eqref{eq:sepwirbd}$ there is some $C>0$ such that 
\begin{eqnarray*}
C^{-1-r}n^r\ln(C^{-1}n)^s-C & \leq & \sum_{i= 0}^{\ln(\wir^k_{X\to Y}(n))} \ln\left(2^{-i}\wir^k_{X\to Y}(n)\right)^q \\ & \leq & \ln\left(\wir^k_{X\to Y}(n)\right)^{q+1},
\end{eqnarray*}
Hence $\wir^k_{X\to Y}(n)\gtrsim \exp(n^{r/(q+1)}\ln(n)^{s/(q+1)})$.
\end{proof}

\section{Completing Theorems \ref{thm:wirehypxEuclb}, \ref{thm:wirhighrkub} and \ref{thm:wirhighrklb}}\label{sec:results}
In this section we give complete proofs of the main results of the paper.

\begin{proof}[Proof of Theorem \ref{thm:wirehypxEuclb}]
 Let $M=\HH^q_F\times\R^r$ and let $Y$ be a bounded degree graph which is quasi-isometric to $M$. Fix $d\in\N$ and $\delta,\epsilon>0$. Enumerate the set of finite graphs with maximal degree $d$ and Cheeger constant $\geq \delta$ by $\Gamma_1,\Gamma_2,\ldots$, with $|\Gamma_i|\leq|\Gamma_j|$ whenever $i\leq j$. Let $X$ be the disjoint union of all the $\Gamma_i$. If $X$ is finite, there is nothing to prove. By \cite[Proposition 2.4]{HumSepExp}, $\sep_X(|\Gamma_i|) \geq \frac{\delta}{2}|\Gamma_i|$ for all $i$. Set $Q=(q+1)\dim_\R(F)-2$, the conformal dimension of the boundary of $\HH^q_F$. We have $\sep_Y(n)\simeq n^{1-1/(r+1)}\ln(n)^{1/(r+1)}$ if $Q=1$ and $\sep_Y(n)\simeq n^{1-1/(Q+r-1)}$ if $Q\geq 2$, so by Corollary \ref{cor:sepLB}, for each $k$ there exists a constant $C$ such that for all $i$,
\begin{equation}\label{eq:wirlbrk1}
	\wir^k_{X\to Y}(|\Gamma_i|)\geq \left\{ \begin{array}{rcl} C^{-1}|\Gamma_i|^{1+1/r}\ln(1+|\Gamma_i|)^{-1/r}-C & \textup{if} & Q=1, \\ C^{-1}|\Gamma_i|^{1+1/(Q+r-1)}-C & \textup{if} & Q\geq 2. \end{array}
\right.
\end{equation}
We continue with the $Q=1$ case, the argument for $Q\geq 2$ is very similar. Now suppose for a contradiction that for every $n$ there is some $\epsilon$-thick embedding $\Gamma_{i_n}\to M$ with volume at most $\frac1n|\Gamma_i|^{1+1/r}\ln(1+|\Gamma_i|)^{-1/r}$. By Proposition $\ref{prop:coarse_wiring}$, there is a coarse $k$-wiring of $\Gamma_{i_n}$ into $Y$ with volume at most \[\frac{k}{n}|\Gamma_i|^{1+1/r}\ln(1+|\Gamma_i|)^{-1/r}\]
for some $k=k(d,M,\varepsilon,Y)$. This contradicts $\eqref{eq:wirlbrk1}$ for large enough $n$.
\end{proof}

\begin{proof}[Proof of Theorem \ref{thm:wirhighrkub}]
Let $\Gamma$ be a finite graph with maximal degree $d$.
By Theorem \ref{thm:wirDL22} there is a $2d$-coarse wiring of $\Gamma$ into a Cayley graph of $\Z_2\wr\Z$ with volume $\leq 4d|\Gamma|\lceil\log_2(1+|\Gamma|)\rceil$. This Cayley graph is quasi-isometric to $\DL(2,2)$ \cite{Woess-LAMPLIGHTERS_HARMONIC_FUNCTIONS}, and $\DL(2,2)$ quasi-isometrically embeds into any graph $X$ quasi-isometric to a symmetric space whose non-compact factor has rank $\geq 2$ \cite[Proposition 2.8 and Theorem 3.1]{HumeMackTess2}. Thus, for some $l$ we have
\[
 \wir^l_{\Gamma\to X}\leq C'N\ln(1+N).\qedhere
\]
\end{proof}
\begin{proof}[Proof of Theorem \ref{thm:wirhighrklb}] The proof is the same as for Theorem \ref{thm:wirehypxEuclb} replacing \eqref{eq:wirlbrk1} with
\begin{equation*}
	\wir^k_{Y\to X}(|\Gamma_i|)\geq C^{-1}|\Gamma_i|\ln(1+|\Gamma_i|)-C.\qedhere
\end{equation*}
\end{proof}

\subsection{Coarse wirings into two dimensional symmetric spaces}\label{sec:2dim}
In this section we collect results about coarse wirings into $\R^2$ and $\HH^2_\R$. The first is a direct construction of a coarse wiring, the second a thick embedding into $\HH_\R^2\times[0,1]$ which is quasi-isometric to $\HH^2_\R$.

\begin{proposition} Every $N$-vertex graph with maximal degree $d$ admits a coarse $2d$-wiring into the standard $2$-dimensional integer lattice $\Z^2$ with volume at most $N^2$.

Let $X$ be the disjoint union of all finite graphs with maximal degree $3$. For any $k$ there is some $C$ such that $\wir^k_{X\to\Z^2}(n)\geq C^{-1}n^2-C$.
\end{proposition}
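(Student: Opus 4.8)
I will prove the two assertions separately. For the upper bound I would imitate the explicit constructions of Lemma~\ref{fineGtoR2} and Theorem~\ref{thm:wirDL22}: enumerate $V\Gamma=\{v_0,\dots,v_{N-1}\}$, send $v_i$ to the lattice point $(i,i)$, and for an edge $\{v_i,v_j\}$ with $i<j$ route it along the ``staircase'' path that runs horizontally at height $i$ from $(i,i)$ to $(j,i)$ and then vertically up the column $x=j$ to $(j,j)$, extending the wiring continuously so that this path is the image of the edge. Every point on every such path lies in $\{0,\dots,N-1\}^2$, so the image of the wiring is contained in that box and has at most $N^2$ vertices, which gives the volume bound; and since the vertices of $\Gamma$ have distinct images, the first coarse-wiring condition holds with constant $1$. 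For the second condition I would note that a fixed horizontal edge $\eta=\{(a,b),(a+1,b)\}$ of $\Z^2$ can appear only on the horizontal portion of a path, which forces the smaller-indexed endpoint of the corresponding edge of $\Gamma$ to be the vertex $v_b$; hence $\eta$ lies on at most $\deg_\Gamma(v_b)\le d$ paths. Symmetrically a vertical edge pins down the column-vertex $v_a$, again giving at most $d$ paths, and since no lattice edge is both horizontal and vertical each edge of $\Z^2$ is covered at most $d\le 2d$ times; thus the map is a coarse $2d$-wiring (the cases $N\le 1$ being trivial).

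For the lower bound I would combine Corollary~\ref{cor:sepLB} with the two relevant separation-profile estimates. Firstly, $X$ contains as subgraphs a family of cubic expanders whose orders tend to infinity but grow at most exponentially; since a graph with Cheeger constant bounded below has $\cut$ linear in its order, picking for each $n$ the largest such expander with at most $n$ vertices gives $\sep_X(n)\gtrsim n$ (this is the content of \cite{HumSepExp}). Secondly, $\sep_{\Z^2}(n)\simeq n^{1/2}$ \cite{BenSchTim}. Substituting $(r,s)=(1,0)$ and $(p,q)=(1/2,0)$ into the $p>0$ case of Corollary~\ref{cor:sepLB} yields $\wir^k_{X\to\Z^2}(n)\gtrsim n^{1/(1/2)}=n^2$ for every $k$; unwinding the definition of $\gtrsim$ and using that $n\mapsto\wir^k_{X\to\Z^2}(n)$ is non-decreasing then converts this into a bound of the form $\wir^k_{X\to\Z^2}(n)\ge C^{-1}n^2-C$, with the case of an infinite profile being vacuous, exactly as in the proof of Corollary~\ref{cor:sepLB}.

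The upper-bound construction is essentially routine; the only point requiring genuine care is the bookkeeping in the second coarse-wiring condition, namely that fixing a lattice edge determines one endpoint of every edge of $\Gamma$ whose image passes through it. For the lower bound there is no real obstacle beyond invoking the two separation-profile computations; the mildly delicate ingredient is that $X$, all of whose graphs have degree at most $3$, nonetheless contains a family of expanders of at most exponentially growing size, which is precisely what one extracts from \cite{HumSepExp}.
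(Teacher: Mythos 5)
Your proof is correct and follows the same route as the paper: the identical diagonal placement $v_i\mapsto(i,i)$ with horizontal-then-vertical staircase paths for the upper bound, and the combination of Corollary~\ref{cor:sepLB} with $\sep_X(n)\simeq n$ and $\sep_{\Z^2}(n)\simeq n^{1/2}$ for the lower bound. Your bookkeeping for the second coarse-wiring condition is in fact slightly tighter than the paper's (you observe that a horizontal edge at height $b$ forces the smaller-indexed endpoint to be $v_b$, giving the bound $d$ directly, whereas the paper only argues that any two paths sharing a lattice edge share an endpoint and settles for $2d$), but both yield a coarse $2d$-wiring as required.
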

\begin{proof}
The second claim follows immediately from Corollary \ref{cor:sepLB} and the fact that $\sep_X(n)\simeq n$ \cite{HumSepExp} and $\sep_{\Z^2}(n)\simeq n^{1/2}$ \cite{BenSchTim}.

Let $\Gamma$ be an $n$-vertex graph with maximal degree $d$. Enumerate the vertices of $\Gamma$ by $v_0,\ldots,v_{n-1}$. We construct a $d$-wiring of $\Gamma$ into $\{0,\ldots,n-1\}^2$ as follows:

Map the vertex $v_k$ to the point $(k,k)$. For each edge $v_iv_j$ (with $i<j$) we define a path $P_{ij}$ which travels horizontally from $(i,i)$ to $(j,i)$, then vertically from $(j,i)$ to $(j,j)$.

To see that this is a $1$-thick embedding, note that if a horizontal edge $(a,b)(a+1,b)$ is in $P_{ij}$ then $b=i$. Similarly, if a vertical edge $(a,b)(a,b+1)$ appears in $P_{ij}$, then $a=j$. Hence, any two paths containing a common edge have a common end vertex. Since, by assumption, there are at most $d$ edges with a given end vertex, we have defined a coarse $2d$-wiring. The volume estimate is obvious.
\end{proof}

\begin{proposition} Let $Y$ be a graph which is quasi-isometric to $\HH_\R^2$ and let $d\in\N$. There are constants $k=k(Y,d)$ and $C=C(Y,d)$ such that any $N$-vertex graph $\Gamma$ with maximal degree $d$ admits a coarse $k$-wiring into $Y$ with volume $\leq CN^2\exp(N)$.

Let $X$ be the disjoint union of all finite graphs with maximal degree $3$. For any $k$ there is some $C$ such that $\wir^k_{X\to Y}(n)\geq C^{-1}\exp(C^{-1}n^{1/2}))-C$.
\end{proposition}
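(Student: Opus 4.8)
The lower bound follows immediately from Corollary \ref{cor:sepLB} together with the facts that $\sep_X(n)\simeq n$ \cite{HumSepExp} (since $X$ contains $3$-regular expanders of unbounded size) and $\sep_Y(n)\simeq\ln n$ \cite{BenSchTim} (as $Y$ is quasi-isometric to $\HH^2_\R$): taking $r=1$, $s=0$, $p=0$, $q=1$ in the $p=0$ case of Corollary \ref{cor:sepLB} gives $\wir^k_{X\to Y}(n)\gtrsim\exp(n^{1/2})$, which is equivalent to the stated inequality.

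For the upper bound, by Corollary \ref{cor:wirreg} it is enough to build a coarse $k$-wiring into one fixed graph quasi-isometric to $\HH^2_\R$; and since $\HH^2_\R\times S^1$ is a homogeneous Riemannian manifold (hence of controlled growth) which is quasi-isometric to $\HH^2_\R$, Proposition \ref{prop:coarse_wiring} reduces the task to producing, for some $\varepsilon=\varepsilon(d)>0$, an $\varepsilon$-thick topological embedding of $\Gamma$ into $\HH^2_\R\times S^1$ with diameter $\leq C(d)N$ and volume $\leq C(d)N^2\exp(N)$. (We stress that, unlike for Euclidean targets, any fixed positive thickness suffices, because Proposition \ref{prop:coarse_wiring} permits an arbitrary parameter $T>0$.) The plan is to construct this directly in the upper half-plane model $\HH^2_\R=\setcon{(x,y)}{y>0}$, using the $S^1$-coordinate only to resolve crossings. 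Enumerate $V\Gamma=\set{v_1,\dots,v_N}$, place $v_i$ at $(2i,1)$ (these are pairwise $\geq\cosh^{-1}(3)$ apart along the horocycle $\set{y=1}$), and route each edge $v_iv_j$ with $i<j$ as a U-shape: down the vertical line $x=2i$ from height $1$ to a ``descent height'' $y_{ij}\in[e^{-N},e^{-1}]$, then horizontally along $\set{y=y_{ij}}$ from $x=2i$ to $x=2j$, then up the line $x=2j$ back to height $1$; the vertical pieces are carried near $S^1$-coordinate $0$ and the horizontal pieces near $S^1$-coordinate $1/2$, and the $\leq d$ vertical pieces at a common vertex are started at slightly perturbed (but pairwise $\geq\tfrac1{2d}$-separated) abscissae.

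The heart of the construction is the choice of heights. Colour the intervals $\setcon{[i,j]}{v_iv_j\in E\Gamma,\ i<j}$ by a greedy interval colouring; this uses $D$ colours, where $D$ is the largest number of these intervals through a single point, and $D\leq dN/2$ since the intervals through $\ell+\tfrac12$ are precisely the $\Gamma$-edges crossing the cut $(\set{v_1,\dots,v_\ell},\set{v_{\ell+1},\dots,v_N})$. Set $\ln(1/y_{ij})=1+(c_{ij}/D)(N-1)$, with $c_{ij}\in\set{0,\dots,D-1}$ the colour of $[i,j]$. Then any two edges whose horizontal segments overlap in the $x$-coordinate have distinct colours, so those segments sit at heights whose logarithms differ by at least $(N-1)/D\geq \tfrac1d$ and are therefore $\geq\tfrac1d$ apart in $\HH^2_\R$; vertical segments belonging to edges at distinct vertices have $x$-values differing by $\geq1$ and so stay $\geq\ln2$ apart below height $1$; vertical and horizontal pieces are separated by the $S^1$-coordinate; and edges of a common colour have disjoint intervals, hence their horizontal segments lie at the same height but with $x$-ranges separated by $\geq2$, which at height $\leq e^{-1}$ forces them far apart. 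This gives $\varepsilon(d)$-thickness. The diameter is $\leq C(d)N$ because every point of every path lies within hyperbolic distance $O(N)$ of $v_1$. For the volume, the dominant contribution is the total length $\sum_{v_iv_j\in E\Gamma}2(j-i)/y_{ij}$ of the horizontal segments; grouping by colour and using $\sum_{c_{ij}=c}(j-i)\leq N$ (disjointness of same-colour intervals) bounds this by $2eN\sum_{c=0}^{D-1}e^{c(N-1)/D}$, a geometric sum that is $\leq C(d)Ne^N\leq C(d)N^2e^N$; the $1$-neighbourhood of the image in $\HH^2_\R\times S^1$ has comparable measure.

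The routine but genuinely fiddly part — and the one I expect to require the most care — is the local layout near each vertex: one must arrange the $\leq d$ vertical pieces incident to $v_i$ and, in particular, the ``corners'' where each such piece turns into its horizontal piece, inside the bounded region around $v_i$, so that these pieces, together with any horizontal segments of other edges passing overhead along the line $x=2i$ at intermediate heights, remain pairwise $\geq\varepsilon(d)$ apart in $\HH^2_\R\times S^1$. This is exactly the phenomenon handled in Lemma \ref{fineGtoR2} and in the proof of Theorem \ref{thm:thickH3ub}, where an auxiliary interval coordinate lets curves step over one another, and I expect it to go through by the same kind of bookkeeping.
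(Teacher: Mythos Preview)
Your lower-bound argument is identical to the paper's.

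For the upper bound, the paper takes a simpler and more uniform route than you do: it constructs a $1$-thick embedding of the \emph{complete} graph $K_N$ into $\HH_\R^2\times[0,1]$, placing $v_i$ at $(i,\,h_0e^{-i};\,0)$ (so the vertices descend along a roughly geodesic curve rather than sitting on a horocycle) and routing each edge by a four-segment path that is the direct analogue of the one in Lemma~\ref{fineGtoR2}.  The volume is then bounded crudely by the volume of the ball of radius $\approx N+2\ln N$ containing the image, which gives $\lesssim N^2e^N$.  This avoids all degree-dependent combinatorics: no interval colouring, no perturbed abscissae, no local layout to organise --- the $[0,1]$-coordinate alone separates horizontal from vertical segments, and the thickness verification is a short case analysis parallel to Lemma~\ref{fineGtoR2}.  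Your approach is genuinely different --- vertices on a horocycle, edges as U-shapes, heights assigned via an interval colouring of the edge-intervals --- and in fact produces the sharper total-length bound $C(d)\,N e^N$, but at the cost of considerably more bookkeeping, including the corner/local-layout details you correctly flag as unfinished.  Both approaches feed into Proposition~\ref{prop:coarse_wiring} in the same way.
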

\begin{proof}
The second claim follows immediately from Corollary \ref{cor:sepLB} and the fact that $\sep_X(n)\simeq n$ \cite{HumSepExp} and $\sep_{\HH_\R^2}(n)\simeq \ln(n)$ \cite{BenSchTim}.

We will construct $1$-thick embeddings $K_N\to \HH_\R^2\times [0,1]$ with volume $\leq C'N^2\exp(N)$. Since $Y$ is quasi-isometric to $\HH_\R^2\times [0,1]$, the result will then follow from Proposition \ref{prop:coarse_wiring}.

Firstly, recall the definition of the metric in the upper halfspace model of $\HH_\R^2$:
\[
d_{\HH_\R^2}((x_1,y_1),(x_2,y_2)) = \cosh^{-1}\left(1+ \frac{(x_2-x_1)^2+(y_2-y_1)^2}{2y_1y_2}\right).
\]
We equip $\HH_\R^2\times[0,1]$ with the metric 
\[
d((w,x;a),(y,z;b))=\max\{d_{\HH_\R^2}((w,x),(y,z)),|a-b|\}.
\]
Let us define $h_0:=(2(\cosh(1)-1)^{-1/2}>1$.

\textbf{Claim:} If $d((w,x;a),(y,z;b))<1$ and $x,z\leq h_0$, then $|a-b|<1$,  $|w-y|<1$ and $|\ln(x/h_0)-\ln(z/h_0)|<1$.

\begin{proof}[Proof of Claim]
It is immediate from the definition that $|a-b|<1$. Since $x,z\leq h_0$, 
 \begin{eqnarray*}
 1 & > & d((w,x;a),(y,z;b)) \\ & \geq & d_{\HH_\R^2}((w,x),(y,z)) \\ & \geq & \cosh^{-1}\left(1+ \frac{(w-y)^2}{2h_0^2}\right) \\ & \geq & \cosh^{-1}(1+(\cosh(1)-1)(w-y)^2).
 \end{eqnarray*}
 Hence $(w-y)^2<1$, so $|w-y|<1$. Finally, write $x=h_0e^p$ and $z=h_0e^q$ with $p,q\in\R$. We have
 \begin{eqnarray*}
 1 & > & d((w,x;a),(y,z;b)) \\ & \geq & d_{\HH_\R^2}((w,x),(y,z))\\ & \geq & \cosh^{-1}\left(1+ \frac{h_0^2(e^p-e^q)^2}{2h_0^2e^{p+q}}\right) \\ & = & \cosh^{-1}\left(\frac12(e^{p-q}+e^{q-p})\right)=|p-q|.
 \end{eqnarray*}
 Hence $|\ln(x/h_0)-\ln(z/h_0)|=|p-q|<1$.
\end{proof}

Enumerate the vertices of $K_N$ by $v_0,\ldots,v_{N-1}$. We map $v_i$ to $(i,h_0e^{-i};0)$ where $h_0=(2(\cosh(1)-1)^{-1/2}$. For $i<j$, we connect $(i,h_0e^{-i};0)$ to $(j,h_0e^{-j};0)$ using the path $P_{ij}$ defined as follows:
\begin{eqnarray}
 (i,h_0e^{-i};0) & \to & (j,h_0e^{-i};0) \\ & \to & (j,h_0e^{-i};1) \\ & \to & (j,h_0e^{-j};1) \\ & \to & (j,h_0e^{-j};0)
\end{eqnarray}
 where the first segment lies in the horocircle $y=h_0e^{-i}$ and the others are geodesics.
 
 We first prove that this embedding is $1$-thick. Let $(w,x;a)\in P_{ij}$ and $(y,z;b)\in P_{kl}$ with $d((w,x;a),(y,z;b))<1$. Set $p=\ln(x/h_0)$ and $q=\ln(z/h_0)$. From the claim we have $\max\{|w-y|,|p-q|,|a-b|\}<1$.
 
 If $a=1$, then $b>0$, so $w=j$ and $y=l$.  Since $w,y$ are both integers they must be equal. Thus $j=l$ and the two paths come from edges which share a vertex.
 
If $a\in(0,1)$ then $w=j$ and $p\in\{-i,-j\}$. Note that one of the four equalities $y=k$, $y=l$, $q=-k$, $q=-l$ holds at every point on $P_{kl}$. If it one of the first two, then $\min\{|j-k|,|j-l|\}<1$ and $j\in\{k,l\}$, or if it is one of the last two, then one of $-i,-j$ is equal to one of $-k,-l$. In any case the two paths share an end vertex.

If $a=0$ then either $p=-i$ or $w=j$ and $p=-j$. Also $b<1$ so either $q=-k$ or $y=l$ and $q=-l$. If $p=-i$, then either $q=-k$ in which case $|-i-(-k)|<1$ by the claim, thus $i=k$; or $q=-l$ in which case $i=l$ by the same reasoning. Next, suppose $w=j$ and $p=-j$. Since $q\in\{-k,-l\}$ we have $j=k$ or $j=l$. If $p=-i$, $y=l$ and $q=-l$, then $i=l$ following the same reasoning.

Every point in the image of the embedding is of the form $(x,h_0e^{-y};z)$ where $|x|,|y|\leq n-1$ and $z\in[0,1]$. Set $p=\left(\frac{n-1}{2},h_0;\frac12\right)$. We have
\begin{eqnarray*}
 d\left((x,h_0e^{-y};z),p\right) & \leq & \cosh^{-1}\left(1+ \frac{\left(\frac{n-1}{2}\right)^2 + h_0^2(1-e^{n-1})^2}{2h_0^2e^{-(n-1)}}\right) + \frac12 \\
 & \leq & \cosh^{-1}\left(1+\frac{\frac{n^2}{4}+h_0^2}{2h_0^2e^{-(n-1)}}\right)+\frac12 \\
 & \leq & \cosh^{-1}\left(1+(\frac{n^2}{8}+1)e^{n-1}\right)+\frac12 \\
 & \leq & \cosh^{-1}\left(\frac{\frac{17n^2}{8}e^{n-1}}{2}\right)+\frac12 \\
  & \leq & \cosh^{-1}\left(\cosh(n-1 +2\ln(n) + \ln(17)-\ln(8))\right)+\frac12 \\
 & = & n-1 +2\ln(n) + \ln(17)-\ln(8) +\frac12 \leq n+2\ln(n) 
\end{eqnarray*}
Thus, the volume of the wiring is at most $4\pi\sinh^2((n+2\ln(n)+1)/2)$: the volume of the ball of radius $n+2\ln(n)+1$ in $\HH_\R^2$. We have
\begin{eqnarray*}
 4\pi\sinh^2((n+2\ln(n)+1)/2) & \leq & 4\pi\left(\frac{\exp((n+2\ln(n)+1)/2)}{2}\right)^2 \\ & \leq & \pi\exp(n+2\ln(n)+1) \\ & = & e\pi n^2e^n\simeq e^n
\end{eqnarray*}
as required.
\end{proof}

\section{Questions}
One possible way to improve our bounds on thick embeddings of graphs into other symmetric spaces whose non-compact factor has rank one is via constructions of thick embeddings into nilpotent Lie groups. A positive resolution to the following question would show that the lower bounds from Theorem \ref{thm:wirehypxEuclb} are sharp whenever $Q\geq 2$.

\begin{question}\label{qu:HDKB} Let $P$ be a connected nilpotent Lie group with polynomial growth of degree $p\geq 3$ and let $d\in \N$. Do there exist constants $C,\varepsilon>0$ which depend on $p,d$ such that for any $N$-vertex graph $\Gamma$ with maximal degree $d$ there is a $\varepsilon$-thick embedding of $\Gamma$ into $P$ with diameter $\leq CN^{1/(p-1)}$?
\end{question}

Another important example worthy of consideration is a semidirect product of the Heisenberg group with $\R$, $H\rtimes_{\psi}\R$ where the action is given by
\[
 \left(\begin{array}{ccc} 1 & x & z \\ 0 & 1 & y \\ 0 & 0 & 1 \end{array}\right)\cdot\psi(t) = 
\left(\begin{array}{ccc} 1 & e^tx & z \\ 0 & 1 & e^{-t}y \\ 0 & 0 & 1 \end{array}\right).
\]
\begin{conjecture} For every $d$ there exist constants $C=C(d)$ and $\varepsilon=\varepsilon(d)$ such that every finite graph $\Gamma$ with maximal degree $d$ admits a $\varepsilon$-thick embedding into $H\rtimes_{\psi}\R$ with volume $\leq C|\Gamma|\ln(1+|\Gamma|)$.
\end{conjecture}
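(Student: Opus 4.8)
The plan is to follow the strategy behind Theorem~\ref{thm:wirhighrkub}: recast the conjecture in terms of coarse wirings and feed it into Theorem~\ref{thm:thickemb}. First I would record that $H\rtimes_\psi\R$ is the universal cover of a compact Riemannian $4$-manifold. Fix a hyperbolic $B\in\mathrm{SL}_2(\Z)$ with larger eigenvalue $\mu>1$ and set $t_0=\ln\mu$; the automorphism $\psi(t_0)=\mathrm{diag}(\mu,\mu^{-1})$ of the abelianisation $\R^2$ of $H$ is $\R$-conjugate to $B$ and so preserves a lattice $L\subset\R^2$. Since $\det B=1$ the commutator relation $[X,Y]=Z$ is preserved, so this lifts to a lattice $\Lambda_H<H$ fixed by $\psi(t_0)$, and the group generated by $\Lambda_H$ and $\psi(t_0)$ is a torsion-free cocompact lattice $\Lambda<H\rtimes_\psi\R$ whose quotient is the desired compact $4$-manifold. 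As $4\geq 3$, Theorem~\ref{thm:thickemb} then reduces the conjecture to: for some $k=k(d)$ and $C=C(d)$, every graph $\Gamma$ of maximal degree $d$ admits a coarse $k$-wiring into a fixed bounded-degree graph $Y$ quasi-isometric to $H\rtimes_\psi\R$, of volume $\leq C|\Gamma|\ln(1+|\Gamma|)$. (One expects $\sep_{H\rtimes_\psi\R}(n)\simeq n\ln(n)^{-1}$, so by Corollary~\ref{cor:sepLB} such a bound would be optimal.)

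The obvious way to build this wiring is to import it from the lamplighter. By Theorem~\ref{thm:wirDL22} every such $\Gamma$ wires into a Cayley graph of $\Z_2\wr\Z$ --- equivalently into $\DL(2,2)$ --- with volume $\lesssim d\,|\Gamma|\ln(1+|\Gamma|)$, and by Corollary~\ref{cor:wirreg} any regular map of $\DL(2,2)$ into $Y$ transports this to $Y$ up to a multiplicative constant. So the whole conjecture would follow from a single geometric input: \textbf{a coarse embedding $\DL(2,2)\hookrightarrow H\rtimes_\psi\R$}. I would first try to produce it by passing to the central quotient $\pi\colon H\rtimes_\psi\R\to(H\rtimes_\psi\R)/Z(H)\cong\mathrm{Sol}$, embedding $\DL(2,2)$ into $\mathrm{Sol}$, and lifting through $\pi$ by a bounded-distortion section, then correcting the central coordinate with a single commutator performed near the critical height $t=0$ (at cost $O(\sqrt{A})$, where $A$ is the symplectic area enclosed by the lift of a geodesic).

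The hard part --- and the reason this is still only a conjecture --- is that $\DL(2,2)$ does not appear to coarsely embed into $\mathrm{Sol}$: at an extreme height the $x$- (resp.\ $y$-) direction of $\mathrm{Sol}$ has arbitrarily fine resolution, so a path of bounded length can alter the lamp data at arbitrarily low scales, which is impossible in $\DL(2,2)$, and the same defect persists in the $x,y,t$-coordinates of $H\rtimes_\psi\R$. So the central extension has to do real work. The route I would then pursue is a routing native to $H\rtimes_\psi\R$ that mimics the explicit ``correct the address from left to right, then return to the start'' wiring in the proof of Theorem~\ref{thm:wirDL22}: use the two opposite horospherical directions of $H\rtimes_\psi\R$ (the $x$-direction, contracting as $t\to+\infty$, and the $y$-direction, contracting as $t\to-\infty$) to carry the $O(\ln|\Gamma|)$-bit address of each vertex, and exploit the central direction $z=[x,y]$ --- which, being fixed by $\psi$, is \emph{not} squashed at extreme heights --- to keep distinct addresses at uniform distance. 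The crux is to show that the commutator cost of manoeuvring in the $z$-direction along these paths can be absorbed into the $O(\ln|\Gamma|)$ length budget; establishing this, or finding an obstruction to it, is precisely what is missing.
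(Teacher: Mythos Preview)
The statement is recorded in the paper as a \emph{Conjecture}; the paper gives no proof, only the remark that a positive resolution would show the dichotomy of \cite{HumeMackTess2} is detected by wiring profiles. So there is no argument in the paper to compare yours against, and your proposal is, as you yourself say, a strategy with an explicitly named missing step rather than a proof.

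Your reduction is sound. The lattice construction is correct --- $\psi$ fixes the centre of $H$, so a $\psi(t_0)$-invariant lattice in the abelianisation lifts to one in $H$, and $H\rtimes_\psi\R$ is indeed the universal cover of a compact $4$-manifold --- and Theorem~\ref{thm:thickemb} then reduces the conjecture to producing a coarse $k$-wiring of volume $\lesssim |\Gamma|\ln(1+|\Gamma|)$ into a fixed bounded-degree model of $H\rtimes_\psi\R$.

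Where you correctly locate the difficulty is also where the proposal stops being a proof. Routing through $\DL(2,2)$ via Theorem~\ref{thm:wirDL22} and Corollary~\ref{cor:wirreg} would indeed finish the job, but the paper's own framing makes clear why this is not available: the paragraph after the conjecture presents $\DL(2,2)$ and $H\rtimes_\psi\R$ as the two \emph{alternative} model spaces in the dichotomy of \cite{HumeMackTess2}. If a regular map $\DL(2,2)\to H\rtimes_\psi\R$ were known, the conjecture would follow immediately from Theorem~\ref{thm:wirDL22} by the same one-line deduction as Theorem~\ref{thm:wirhighrkub}, and there would be no conjecture to state. Your ``native'' routing idea --- encode addresses in the opposed horospherical directions and use the $\psi$-fixed centre to separate them --- is a reasonable heuristic, but you have not specified the paths, verified the $k$-to-one edge bound, or shown that the commutator cost stays within $O(\ln|\Gamma|)$. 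That is exactly the content the conjecture is asking for, and it remains open here as in the paper.
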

An immediate consequence of this conjecture is that the dichotomy at the heart of \cite{HumeMackTess2} is also detected by wiring profiles. Specifically, let $G$ be a connected unimodular Lie group, let $Y$ be a graph quasi-isometric to $G$ and let $X$ be the disjoint union of all finite graphs with degree $\leq 3$. Either $G$ is quasi-isometric to a product of a hyperbolic group and a nilpotent Lie group, in which case there is some $p>1$ such that for all $k$ sufficiently large $\wir^k_{X\to Y}(N) \gtrsim N^p$; or $G$ contains a quasi-isometrically embedded copy of either $\DL(2,2)$ or $H\rtimes_{\psi}\R$, in which case for all $k$ sufficiently large $\wir^k_{X\to Y}(N)\simeq N\ln(N)$. \medskip

The lower bound from separation profiles is incredibly useful, and our best results are all in situations where we can prove that the lower bound in Theorem \ref{thm:wirsep} is optimal. As a result it is natural to record the following:

\begin{question} For which bounded degree graphs $Y$ does the following hold:

Let $X$ be the disjoint union of all finite graphs with maximal degree $\leq 3$. For all $k$ sufficiently large
\[
 \sep_Y(\wir^k_{X\to Y}(N))\simeq N.
\] 
\end{question}
A starting point would be to determine when the following strengthening of Proposition \ref{prop:sepLB} holds:

\begin{question} Let $X,Y$ be graphs of bounded degree where $\wir^k_{X\to Y}(n)<\infty$. Does there exist a constant $C>0$ such that for all $n$
\[
 \sep_Y(\wir^k_{X\to Y}(n)) \geq C^{-1}\sep_X(C^{-1}n)-C?
\]
\end{question}
We certainly should not expect Theorem \ref{thm:wirsep} give the correct lower bound in all cases. A natural example to consider would be a coarse wiring of an infinite binary tree $B$ into $\Z^2$. The depth $k$ binary tree $B_k$ (with vertices considered as binary strings $v=(v_1,v_2,\ldots v_m)$ of length $\leq k$) admits a $1$-wiring into $\Z^2$ with volume $\lesssim |B_k|\ln|B_k|$ as follows
\[
 \psi(v_1,v_2,\ldots v_l) = \left( \sum_{i\in\{l\mid v_l=0\}} 2^{k-i}, \sum_{j\in\{l\mid v_l=1\}} 2^{k-i}\right)
\]
where the path connecting $\psi(v_1,v_2,\ldots v_l)$ to $\psi(v_1,v_2,\ldots v_l,0)$ (respectively $\psi(v_1,v_2,\ldots v_l,1)$) is a horizontal (resp.\ vertical) line.

\begin{question} Is it true that for all sufficiently large $k$, $\wir^k_{B\to\Z^2}(N)\simeq N\ln(N)$? Does the lower bound hold for all coarse wirings $X\to Y$ where $X$ has exponential growth and $Y$ has (at most) polynomial growth?
\end{question}
Since the first version of this paper appeared, this question has been resolved by Kelly \cite{Kelly}, who proved the slightly surprising result that $\wir^1_{T_3\to\Z^2}(N)\simeq N$.

It is also natural to ask whether other invariants which behave monotonically with respect to coarse embedding (and regular maps) provide lower bounds on wiring profiles.

\appendix

\section{Appendix: The Kolmogorov-Barzdin Embedding Theorem in Higher Dimensions}

The goal of this appendix is to prove Theorem \ref{thm:KBRn}. The main theorem roughly says that if we have a graph of bounded degree with $V$ vertices, then we can embed it into an $n$-dimensional Euclidean ball of radius $V^{1/(n-1)}$ without too many edges or vertices intersecting any unit ball. Kolmogorov and Barzdin proved the theorem in dimension 3 and Guth sketched a proof that showed how their method generalized to higher dimensions in the language of thick embeddings. In this appendix we present a full proof using the language of coarse wirings introduced in the present paper.

\begin{theorem} \label{kbthm} \cite{KB},\cite{GQ} Let $Q^1_r$ be the path graph with $r$ vertices, and let
	$$Q^n_r =  Q^1_r \times Q^1_r \times \ldots Q^1_r$$
	where the graph product is taken $n$ times. If $\Gamma$ is a graph where every vertex has degree at most $k$, then for some integer $C>0$ that only depends on $n$ and $k$, and $R = \lceil |V\Gamma|^{\frac{1}{n-1}} \rceil$ there is a coarse $(k+n)$-wiring, 
	
	$$f: \Gamma \to Q^n_{2CR}.$$
\end{theorem}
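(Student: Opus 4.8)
The plan is to mimic the classical Kolmogorov--Barzdin argument: we randomly place the vertices of $\Gamma$ on an integer grid of side length proportional to $R = \lceil |V\Gamma|^{1/(n-1)}\rceil$ inside a larger grid $Q^n_{2CR}$, then route each edge along a (roughly) monotone lattice path, and show by a first-moment computation that with positive probability no lattice vertex or lattice edge is overloaded. Concretely, I would first fix the ambient grid: choose an integer $C$ (to be determined) and work inside $Q^n_{2CR}$, reserving a central sub-box $B$ of side $R$ for the images of the vertices of $\Gamma$ and using the extra $\Theta(CR)$ layers of slack to give routing paths room to spread out. Assign to each vertex $v \in V\Gamma$ an independent uniformly random point $\rho(v)$ of $B \cap \Z^n$; since $|B \cap \Z^n| \geq R^n \geq |V\Gamma|^{n/(n-1)} \gg |V\Gamma|$, the expected number of vertices landing on any fixed grid point is $|V\Gamma| / R^n \leq 1$, and a short second-moment or Poisson-type estimate shows that with high probability every fibre has size $O(1)$ — in fact we can enforce at most $k+n$ by a modest adjustment (e.g.\ enlarging $C$ or using a rejection argument), which handles wiring condition (1).

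The substantive step is routing the edges so that condition (2) holds: each lattice edge of $Q^n_{2CR}$ lies on at most $k+n$ of the image paths. For an edge $uv \in E\Gamma$ with endpoints placed at $\rho(u), \rho(v)$, I would route it by a path that first moves within $B$ and then uses one of the ``free'' outer layers indexed by a coordinate direction to carry most of its length, so that the paths are spread across $\Theta(CR)$ parallel hyperplanes. The key counting claim is that for a fixed lattice edge $e$, the probability that the path for a given $uv$ passes through $e$ is $O(1/(CR^{n-1}))$ — this is because the path is essentially determined by $\rho(u)$ and $\rho(v)$ together with a choice among $\Theta(CR)$ routing layers, so $e$ constrains these choices to lie in a set of relative size $O(1/(CR^{n-1}))$. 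Summing over the $\leq \frac{k}{2}|V\Gamma| \leq \frac{k}{2} R^{n-1}$ edges of $\Gamma$, the expected load on $e$ is $O(k/C)$, which can be made less than $1$ by taking $C$ large in terms of $k$ and $n$; a union bound over all $O((CR)^n)$ lattice edges, combined with a standard ``alteration'' step (deleting or re-routing the $o(1)$ fraction of bad edges, or using the Lovász Local Lemma), upgrades this to: with positive probability every lattice vertex and every lattice edge carries at most $k+n$ images. Here the additive $n$ in $k+n$ absorbs the bounded local self-overlap coming from the structure of a single monotone lattice path near a vertex.

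Finally, from such a successful random choice one reads off the coarse $(k+n)$-wiring $f \colon \Gamma \to Q^n_{2CR}$: vertices go to their assigned grid points, edges to their routed lattice walks, and continuity is automatic since each edge maps to a walk. The diameter bound $\diam(f) \leq C|\Gamma|^{1/(n-1)}$ is immediate from the ambient grid having side $2CR = O(|\Gamma|^{1/(n-1)})$, and the volume bound $\vol(f) \leq C|\Gamma|^{1+1/(n-1)}$ follows since $\vol(f)$ counts the vertices in the image, which is at most the number of grid vertices meeting some path, and each of the $\leq \frac{k}{2}|V\Gamma|$ edges contributes a walk of length $O(CR) = O(|\Gamma|^{1/(n-1)})$. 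I expect the main obstacle to be making the routing scheme explicit enough that the crucial per-edge passage probability $O(1/(CR^{n-1}))$ is genuinely uniform over all lattice edges $e$ — in particular controlling the edges near the boundary of $B$ and near the vertex images, where paths are forced to concentrate — and ensuring the ``at most $k+n$'' bound (rather than merely $O(1)$) can be met, which is where the interplay between the probabilistic estimate and the combinatorial structure of monotone lattice paths has to be handled carefully.
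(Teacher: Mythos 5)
Your overall plan --- route edges along random lattice paths and argue that no lattice edge is overloaded --- is in the right spirit, but the execution diverges from the paper's in two important ways, and one of them hides a genuine gap.

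First, the paper does not place vertices randomly in a box of side $R$ in $\Z^n$. It places them deterministically and \emph{injectively} into a lower-dimensional face $Q^{n-1}_R$ of the cube (with edges of length $C$), exploiting $R^{n-1}\geq |V\Gamma|$. Your random placement forces you to separately control vertex collisions, and also creates a worse combinatorial situation near vertex images (the step you flag yourself as the main obstacle), since the degree of freedom you use to spread paths apart is tangled up with the randomness in the endpoints.

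Second and more seriously, the first-moment plus union-bound argument does not close. You correctly compute $\mathbb{E}[\text{load on }e]=O(k/C)$ for a single lattice edge $e$. But bounding $\max_e(\text{load on }e)$ by $k+n$ requires a tail bound $\Pr[\text{load on }e > k+n]$ that beats a union bound over $\Theta((CR)^n)$ lattice edges, and no such bound is available for fixed $k,n,C$: the load is a sum of $\Theta(R^{n-1})$ nearly-independent Bernoullis with individual probability $O(1/(CR^{n-1}))$, so the tail at level $k+n+1$ is of order $(k/C)^{k+n+1}$, \emph{independent of $R$}, and $(CR)^n\cdot(k/C)^{k+n+1}\to\infty$ as $R\to\infty$. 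Your backup plans don't obviously repair this: ``deleting bad edges'' changes $\Gamma$, which is not permitted; ``re-routing'' bad edges sequentially is really a different (greedy) argument; and for the Lov\'asz Local Lemma the dependency graph is not locally bounded, because a single routing path of length $\Theta(CR)$ couples the events $A_e$ for all $\Theta(CR)$ lattice edges it might traverse, for all of its $\Theta(CR)^{n-1}$ possible routings.

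The paper avoids the concentration issue entirely by working greedily: it selects the paths $f(e_1),f(e_2),\ldots$ one at a time, each chosen from $\Theta((CR)^{n-1})$ candidate paths built from $2n-1$ coordinate-axis segments with random lengths. At step $i$ one only needs that, over the random choice of lengths, the probability that the new path shares a \emph{parallel} (same-direction) segment with any previous path is $<1/2$; a union bound over the $O(n)$ segments of the new path, each bad with probability $O(k/C)$, suffices. Crucially, perpendicular crossings are permitted and never counted as bad. The final bound $k+n$ is then \emph{structural}, not probabilistic: along any lattice edge, at most $k$ paths can run parallel (they must share a $\Gamma$-endpoint of degree $\leq k$, since parallel overlap is only allowed on the $x_n$ legs emanating from a common image vertex), and perpendicular crossings contribute at most one per remaining direction. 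If you replace your ``place-everything-at-once, then union bound or LLL'' step with this sequential selection and the parallel-vs-perpendicular bookkeeping, your proposal becomes essentially the paper's proof.
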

Here, the graph product $\Gamma_1\times\ldots\times \Gamma_n$ is the graph with vertex set $V\Gamma_1\times \ldots \times V\Gamma_n$ and edges $(v_1,\ldots,v_n),(w_1,\ldots,w_n)$ whenever there is some $j$ such that $v_jw_j\in E\Gamma_j$ and $v_i=w_i$ for all $i\neq j$.

The proof of Theorem \ref{thm:KBRn} follows immediately from Theorem \ref{kbthm} by first applying \ref{thm:thickemb}, then rescaling by a factor of $\varepsilon^{-1}$.

We say a few words about our strategy for constructing $f$. If we think of $Q^n_{2CR}$ as a graph embedded in an $n$-cube, then our $f$ maps the vertices of $\Gamma$ into some face of this cube. The edges of $\Gamma$ are mapped to paths which each consist of $O(n)$ straight segments of randomly chosen lengths. It turns out that this $O(n)$ freedom is enough to guarantee that, with non-zero probability, there is no edge of $Q^n_{2CR}$ where too many of these paths overlap.

In the next section we provide a proof of Theorem \ref{kbthm} following \cite{GQ}.

\section{Proof of Theorem \ref{kbthm}} \label{s2}

\begin{proof} Let $C$ be some large constant, only depending on $n$, which we will choose later. We can think of $Q^n_{2CR}$ as a graph embedded in the cube $[0, 2CR]^n$, where each edge has length $1$. We let $Q^{n-1}_{R}$ be a graph embedded in the bottom face of this cube. Namely, $Q^{n-1}_R$ sits inside $[0, CR]^{n-1} \times 0 \subset [0,2CR]^n$, with each edge having length $C$. Begin by defining $f$ on $V\Gamma$ by embedding all the vertices of $\Gamma$ into $Q^{n-1}_R$ in any way we like. Such an embedding is possible since $R^{n-1}$ is larger than $|V\Gamma|$.
	\par
	Next we have to extend $f$ to the edges of $\Gamma$. Give the edges of $\Gamma$ some order, say $\{e_i\}_{i=1}^{|E\Gamma|}$, and let the endpoints of $e_i$ be $x_{i,-} \in V\Gamma$ and  $x_{i,+} \in V\Gamma$. For many values of $j$, we will construct paths $\gamma(i,j)$ from $f(x_{i,+})$ to $f(x_{i,-})$. Each $\gamma(i,j)$ will consist of $(2n-1)$ straight segments. We select $f(e_1)$ from among the paths $\gamma(1,j)$. Next we select $f(e_2)$ from among the paths $\gamma(2,j)$, making sure it does not pass too close to $\gamma(1,j)$. And so on. At step $i$, we have to see that one of the paths $\gamma(i,j)$ stays far enough away from the previous paths $f(e_1), \ldots f(e_{i-1})$. In fact, we will show that at step $i$, if we choose $j$ randomly, the probability that $\gamma(i,j)$ comes too close to one of the previous paths is less than one half.
	\par
	To define these paths we will use $x_i$ to refer the $i$th coordinate direction in $[0,2CR]^n$. For a set of $n$ integers $j = \{j_0, j_1, \ldots j_{n-1}\} \in ([0, CR] \cap \mathbb{Z})^{n-1}$ the path $\gamma(i,j)$ has the following form. Starting at $f(x_{i,+})$, we first draw a segment in the $x_n$ direction with length $j_0$. Next we draw a segment in the $x_1$ direction with length $j_1$. Then we draw a segment in the $x_2$ direction with length $j_2$. We continue in this way up to a segment in the $x_{n-2}$ direction of length $j_{n-2}$. We have $(CR+1)^{n-1}$ choices for $j$. Then we draw a segment in the $x_{n-1}$ direction which ends at the $x_{n-1}$ coordinate of $f(x_{i,-})$. Then we draw a segment in the $x_{n-2}$ direction which ends at the $x_{n-2}$ coordinate of our target $f(x_{i,-})$, etc. Finally, we draw a segment in the $x_{n}$ direction which ends at $f(x_{i,-})$.
	\par
	We claim that we can choose $j$ so that $\gamma(i,j)$ only intersects previously selected $f(e_i)$ in the $x_n$ direction or intersects them perpendicularly. Since each path is made of segments that point in the coordinate directions, we just have to check that none of the segments intersects a segment of a previous path going in the same direction. Call a segment bad if it intersects a segment from a previous path going in the same direction.
	\par
	The initial segment of $\gamma(i,j)$, in the $x_n$ direction, can intersect at most $k$ segments in the same direction of $f(e_1) \ldots f(e_{i-1})$ because $f$ is an embedding on $V\Gamma$ and $\Gamma$ has degree at most $k$ at each vertex. Consider the first segment in the $x_1$ direction. On this segment, the $2\ldots (n-1)$ coordinates are equal to those of $f(x_{i,+})$. This segment can intersect an $x_1$ segment of a previous path $f(e_{i'})$ only if $f(x_{i,+})$ has the same $2\ldots (n-1)$ coordinates as $f(x_{i',+})$ or $f(x_{i',-})$. This leaves at most $2Rk$ worrisome values of $i'$. But on this segment of $\gamma(i,j)$, the $n$ coordinate is fixed equal to $j_0$. This segment is bad only if it has the same $x_n$ coordinate as a segment in the $x_1$ direction in one of the $2Rk$ worrisome values of $i'$. But there are $CR$ choices for $j_0$. So, choosing $(j_0,\ldots,j){n-1})$ uniformly at random, the probability that this first segment is bad is at most $\frac{2k}{C}$.
	\par
	A similar argument holds for the the second segment. This $x_2$ segment can intersect a previous path $f(e_{i'})$ only if $f(x_{i,+})$ has the same $3 \ldots (n-1)$ coordinates as $f(x_{i',+})$ or $f(x_{i',-})$. This leaves at most $(2Rk)^2$ worrisome values of $i'$ . But there are $(CR)^2$ choices for $(j_0, j_1)$. So the probability that the second segment is bad is at most $(\frac{2k}{C})^2$.
	\par The same reasoning applies for the first $n$ segments. And in fact the same reasoning applies for
	the following $n$ segments as well. For instance, consider the second (and last) segment in the $x_1$ direction. Over the course of this segment, the $2\ldots (n-1)$ coordinates are equal to those of $f(x_{i,-})$, and so this segment can intersect a segment in the $x_1$ direction of a previous path $f(e_{i'})$ only if $f(x_{i,-})$ has the same $2\ldots (n-1)$ coordinates as $f(x_{i',+})$ or $f(x_{i',-})$. This leaves at most $2Rk$ worrisome values of $i'$. But there are more than $CR$ choices of $j_0$, and so the probability that this segment is bad is at most $\frac{2k}{C}$. In summary, there are $2n-1$ segments, and each has probability at most $\frac{2k}{C}$ of being bad, if $C$ is large. So, by a union bound, for some large $C$ only depending on $n$ and $k$, more than half the time, all segments in directions $x_1 \ldots x_{n-1}$ are good. This gives us a path $f(e_i)$ which intersects at most $k$ paths in segments in the $x_n$ direction, and intersects all other previous paths perpendicularly. From the construction, we see that at most $(k+n)$ of the paths we choose intersect any vertex or edge of $Q^n_{2CR}$. In other words, $f$ is a $(k+n)$-coarse wiring.
\end{proof}

\def\cprime{$'$}


\begin{thebibliography}{HMT22}
\bibitem[BST12]{BenSchTim}
I.~Benjamini, O.~Schramm, and {\'A}.~Tim{\'a}r.
\newblock On the separation profile of infinite graphs.
\newblock {\em Groups Geom. Dyn.}, 6(4):639--658, 2012.

\bibitem[GG12]{GG}
M.~Gromov and L.~Guth.
\newblock Generalizations of the Kolmogorov-Barzdin embedding estimates.
\newblock \emph{Duke Math. J.}, 161(13):2549--2603, 2012.

\bibitem[Gu16]{GQ} 
L.~Guth.
\newblock Recent progress in quantitative topology. 
\newblock \emph{Surveys in Differential Geometry}, 22(1):191--216, 2017.

\bibitem[Hum17]{HumSepExp}
D.~Hume.
\newblock A continuum of expanders.
\newblock {\em Fund. Math.}, 238:143--152, 2017.

\bibitem[HMT20]{HumeMackTess1}
D.~Hume, J.~M. Mackay, and R.~Tessera.
\newblock Poincar\'e profiles of groups and spaces.
\newblock {\em Rev. Math. Iberoam.}, 36(6):1835--1886, 2020.

\bibitem[HMT22]{HumeMackTess2}
D.~Hume, J.~M. Mackay, and R.~Tessera.
\newblock Poincar\'e profiles of Lie groups and a coarse geometric dichotomy.
\newblock {\em GAFA}, 32:1063--1133, 2022.

\bibitem[Kel23]{Kelly}
S.~Kelly.
\newblock A Topological Embedding of the Binary Tree into the Square Lattice
\newblock Preprint available from arXiv:2311.13195.


\bibitem[KB93]{KB}
A.~N.~Kolmogorov and Y.~M.~Barzdin.
\newblock On the realization of networks in three-dimensional space.
\newblock In \emph{Selected Works of Kolmogorov}, Kluwer,
Dordrecht, 3:194--202, 1993.

\bibitem[Lo21]{Lopez}
F.~L\'{o}pez, B.~Pozzetti, S.~Trettel, M.~Strube and A.~Wienhard.
\newblock Symmetric Spaces for Graph Embeddings: A Finsler-Riemannian Approach.
\newblock \textit{Proceedings of the 38th International Conference on Machine
Learning}, PMLR 139, 2021.

\bibitem[Pan89]{Pan-89-cdim}
P.~Pansu.
\newblock Dimension conforme et sph\`ere \`a l'infini des vari\'et\'es \`a
  courbure n\'egative.
\newblock {\em Ann. Acad. Sci. Fenn. Ser. A I Math.}, 14(2):177--212, 1989.

\bibitem[Ra23]{Raistrick}
R.~Raistrick.
\newblock Dependence of the coarse wiring profile on the choice of parameter.
\newblock Preprint available from arXiv:2311.08436.

\bibitem[Woe05]{Woess-LAMPLIGHTERS_HARMONIC_FUNCTIONS}
W.~Woess.
\newblock Lamplighters, {D}iestel-{L}eader graphs, random walks, and harmonic
  functions.
\newblock {\em Combin. Probab. Comput.}, 14(3):415--433, 2005.

\end{thebibliography}
\end{document}